\providecommand\given{} \newcommand\SetSymbol[1][]{
   \mathrel{}\mathclose{}#1|\allowbreak\mathopen{}\mathrel{}}
\DeclarePairedDelimiterX\Set[1]{\lbrace}{\rbrace}{ \renewcommand\given{\SetSymbol[\delimsize]} #1 }
\DeclarePairedDelimiter\abs{\lvert}{\rvert}\DeclarePairedDelimiter\norm{\lVert}{\rVert}
\let\oldabs\abs
\def\abs{\@ifstar{\oldabs}{\oldabs*}}
\let\oldnorm\norm
\def\norm{\@ifstar{\oldnorm}{\oldnorm*}}
\let\oldSet\Set
\def\Set{\@ifstar{\oldSet}{\oldSet*}}
\theoremstyle{plain}
\newtheorem{prop}[equation]{Proposition}
\newtheorem*{thm*}{Theorem}
\newtheorem*{prop*}{Proposition}
\newtheorem*{principle*}{Principle}
\theoremstyle{definition}
\newtheorem{lemma}[equation]{Lemma}
\newtheorem{defn}[equation]{Definition}
\newtheorem*{cor*}{Corollary}
\newtheorem*{lemma*}{Lemma}
\newtheorem*{defn*}{Definition}
\theoremstyle{remark}
\newtheorem{rem}[equation]{Remark}
\newtheorem{ex}[equation]{Example}
\newtheorem*{rem*}{Remark}
\newtheorem*{ex*}{Example} \usepackage{thmtools, thm-restate} 
\tikzset{
	pf/.style={commutative diagrams/.cd, every arrow, every label},
	surj/.style=commutative diagrams/two heads,
	inj/.style=commutative diagrams/hook,
	gl/.style=commutative diagrams/equal,
	mat/.style={matrix of math nodes, commutative diagrams/.cd, every cell},
	dr/.style={matrix of math nodes, commutative diagrams/.cd, every cell, column sep=small},
	seq/.style={matrix of math nodes, commutative diagrams/.cd, every cell, column sep=small}
	}
\newenvironment{diag*}{\[\begin{tikzpicture}[commutative diagrams/.cd, every diagram, baseline=(current bounding box.center)]}{\end{tikzpicture}\]\ignorespacesafterend}
\newenvironment{diag}{\begin{equation}\begin{tikzpicture}[commutative diagrams/.cd, every diagram, baseline=(current bounding box.center)]}{\end{tikzpicture}\end{equation}\ignorespacesafterend}
\author{\texorpdfstring{Enno Keßler \and Artan Sheshmani \and Shing-Tung Yau}{
Enno Keßler, Artan Sheshmani, Shing-Tung Yau
}
}
\title{Torus Actions on Moduli Spaces of Super Stable Maps of Genus Zero}
\date{}
\setlist[enumerate,1]{label = (\roman*)}
\DeclareRobustCommand{\tensor}{\stensor}
\DeclareMathOperator{\ACI}{I}
\DeclareMathOperator{\Aut}{Aut}
\DeclareMathOperator{\Coker}{Coker}
\DeclareMathOperator{\GL}{GL}
\DeclareMathOperator{\im}{im}
\DeclareMathOperator{\id}{id}
\DeclareMathOperator{\ind}{ind}
\DeclareMathOperator{\PGL}{PGL}
\DeclareMathOperator{\SpGL}{Sp}
\DeclareMathOperator{\SGL}{SL}
\DeclareMathOperator{\Tr}{Tr}
\DeclareMathOperator{\UGL}{U}
\newcommand{\dual}[1]{{#1}^{\vee}}
\newcommand{\cat}[1]{\mathsf{#1}}
\newcommand{\Top}[1]{{\|#1\|}}
\newcommand{\Red}[1]{{#1}_{red}}
\newcommand{\Smooth}[1]{{|#1|}}
\renewcommand{\d}{\mathop{}\!d}
\newcommand{\VSec}[2][]{\Gamma_{#1}\left(#2\right)}
\newcommand{\tangent}[2][]{T_{#1}#2} \newcommand{\differential}[1]{\d{#1}} \newcommand{\cotangent}[1]{\dual{T}#1}
\newcommand{\Integers}{\mathbb{Z}}
\newcommand{\Z}{\Integers}
\newcommand{\RealNumbers}{\mathbb{R}}
\newcommand{\R}{\RealNumbers}
\newcommand{\ComplexNumbers}{\mathbb{C}}
\newcommand{\C}{\ComplexNumbers}
\newcommand{\ProjectiveSpace}[2][]{\mathbb{P}_{#1}^{#2}}
\newcommand{\cD}{\mathcal{D}}
\newcommand{\cO}{\mathcal{O}}
\newcommand{\targetACI}{J}
\newcommand{\DJBar}{\overline{D}_\targetACI}
\newcommand{\DelJBar}{\overline{\partial}_\targetACI}
\DeclareMathOperator{\rk}{rk}
\DeclareMathOperator{\DLaplace}{\Delta^\cD}
\DeclareMathOperator{\nablabar}{\overline{\nabla}}
\newcommand{\Dirac}{\slashed{D}}
\newcommand{\ev}{\mathrm{ev}}
\newcommand{\Fieldspace}{\mathcal{H}}
\newcommand{\Targetbundle}{\mathcal{E}}
\DeclareMathOperator{\DefBundle}{Def}
\DeclareMathOperator{\ObsBundle}{Obs}
\DeclareMathOperator{\SUSY}{SUSY}
\newcommand{\SCP}{\operatorname{SC} (\ProjectiveSpace[\C]{1|1})}
\newcommand{\Split}{\operatorname{Split}}
\newcommand{\NilIdeal}{\mathcal{I}_{nil}}
\begin{document}
\maketitle
\begin{abstract}
	We construct smooth \(\C^*\)-actions on the moduli spaces of super \(\targetACI\)-holo\-morph\-ic curves as well as super stable curves and super stable maps of genus zero and fixed tree type such that their reduced spaces are torus invariant.
	Furthermore, we give explicit descriptions of the normal bundles to the fixed loci in terms of spinor bundles and their sections.
	Main steps to the construction of the \(\C^*\)-action are the proof that the charts of the moduli space of super \(\targetACI\)-holomorphic curves obtained by the implicit function theorem yield a smooth split atlas and a detailed study of the superconformal automorphism group of \(\ProjectiveSpace[\C]{1|1}\) and its action on component fields.
\end{abstract}

\counterwithin{equation}{subsection}
\section{Introduction}

In~\cites{KSY-SJC}{KSY-SQCI} we have generalized the notions of \(\targetACI\)-holomorphic curves and stable maps to cases where the domain is a super Riemann surface and the target a classical almost Kähler manifold.
Super Riemann surfaces are supergeometric extensions of Riemann surfaces with spin structure having in addition a complex dimension with anti-commuting coordinates.
Their moduli spaces are superspaces which have their non-super counterpart as reduced space.
In this article we improve the understanding of how the super moduli spaces extend the classical counterpart by constructing torus actions that leave the reduced spaces invariant and describing the normal bundles to the inclusions.
The normal bundles are vector bundles on the non-super moduli spaces and are canonically described in terms of sections, twists and pullbacks of spinor bundles on the curves classified by the non-super moduli space.
In contrast, the torus actions are not canonical but rather are constructed by choosing explicit smooth split models of the super moduli spaces.
Nevertheless, the torus actions are constructed such that the standard forgetful maps and gluing maps are equivariant.

We are interested in torus actions on super moduli spaces that leave their reduced spaces invariant because the construction of an extension of Gromov--Witten invariants to the super case in the forthcoming paper~\cite{KSY-SGWIvTL} is motivated by torus localization.

Recall that a map \(\Phi\colon M\to N\) from a super Riemann surface \(M\) to an almost Kähler manifold \(N\) with symplectic form \(\omega\) and compatible almost complex structure \(\targetACI\) is called a super \(\targetACI\)-holomorphic curve if \(\Phi\) satisfies the following equation
\begin{equation}
	\DJBar \Phi
	= \frac12\left.\left(\differential{\Phi} + \ACI\otimes \targetACI \differential{\Phi}\right)\right|_\cD
	= 0.
\end{equation}
Here \(\ACI\) is the almost complex structure on the super Riemann surface and \(\cD\subset\tangent{M}\) the distribution defining the super Riemann surface structure.
In~\cite{KSY-SJC} we have constructed under certain conditions on the fixed super Riemann surface \(M\) and almost Kähler manifold \(N\) a moduli space \(\mathcal{M}(A)\) of maps \(\Phi\colon M\to N\) such that the homology class of the image of the reduced map satisfies \([\im \Red{\Phi}]=A\) for some fixed homology class \(A\in H_2(N)\).
The moduli space \(\mathcal{M}(A)\) is a real supermanifold of dimension
\begin{equation}
	2n\left(1-p\right) + 2\left< c_1(\tangent{N}), A\right>|2\left<c_1(\tangent{N}), A\right>,
\end{equation}
where \(2n\) is the real dimension of \(N\) and \(p\) is the genus of \(\Red{M}\).

For a superpoint \(C=\R^{0|s}\), the \(C\)-points of \(\mathcal{M}(A)\) are pairs \((\varphi, \psi)\) consisting of a map \(\varphi\colon \Red{M}\times C\to N\) satisfying an inhomogeneous \(\DelJBar\)-equation and a section \(\psi\in\VSec{\dual{S}\otimes_\C\varphi^*\tangent{N}}\) of the twisted spinor bundle \(\dual{S}\otimes_\C\varphi^*\tangent{N}\) satisfying an inhomogeneous Dirac-equation, see~\cite{KSY-SJC}.
It follows that the reduced manifold of \(\mathcal{M}(A)\) is the moduli space \(M(A)\) of \(\targetACI\)-holomorphic curves \(\phi\colon \Red{M}\to N\).
The fibers above \(\phi\) of the normal bundle \(N_{\mathcal{M}(A)/M(A)}\) of the embedding \(M(A)\hookrightarrow\mathcal{M}(A)\) above \(\phi\) are given by the holomorphic sections \(H^0(\dual{S}\otimes_\C \phi^*\tangent{N})\) of a twisted spinor bundle over the Riemann surface \(\Red{M}\).
The first main result of this paper is:
\begin{restatable*}{thm}{ModuliSpaceIsSplit}\label{thm:ModuliSpaceIsSplit}
	Assume that
	\begin{itemize}
		\item
			the super Riemann surface \(M\) is holomorphically relatively split, that is, the gravitino vanishes,
		\item
			the moduli space \(\mathcal{M}(A)\) is smoothly obstructed, and
		\item
			the target \(N\) is Kähler.
	\end{itemize}
	Then the moduli space \(\mathcal{M}(A)\) inherits a split atlas from \(\hat{\mathcal{H}}\).
\end{restatable*}

The split atlas constructed in~\ref{thm:ModuliSpaceIsSplit} yields an explicit identification of the sheaf of functions \(\cO_{\mathcal{M}(A)}\) with the sheaf of sections of the exterior algebra \(\bigwedge \dual{N}_{\mathcal{M}(A)/M(A)}\).
The proof of Theorem~\ref{thm:ModuliSpaceIsSplit} proceeds by showing that \(\mathcal{M}(A)\subset \hat{\mathcal{H}}\subset\mathcal{H}\) where \(\Fieldspace\) is the infinite-dimensional supermanifold of all maps \(M\to N\) for which we have constructed charts via the exponential map and component field decomposition in~\cite{KSY-SJC}.
\(\hat{\mathcal{H}}\) is an infinite-dimensional real subsupermanifold of the supermanifold~\(\mathcal{H}\) such that the charts induced from \(\Fieldspace\) yield a split atlas.
The inclusion \(\mathcal{M}(A)\subset\hat{\mathcal{H}}\) is then shown to respect the split atlas of \(\hat{\mathcal{H}}\).
Furthermore, \(\mathcal{M}(A)\) is equipped with a natural almost complex structure which is induced from the almost complex structure \(\targetACI\) on \(N\).
The particular split atlas of \(\mathcal{M}(A)\) and the almost complex structure on \(\mathcal{M}(A)\) allow to show that the map \((\varphi, \psi)\mapsto (\varphi, t\psi)\) for \(t\in\C^*\) extends to a smooth complex linear torus action \(\C^*\times \mathcal{M}(A)\to \mathcal{M}(A)\).

It is well known that the moduli space \(M(A)\) is in general not compact but has a natural compactification by stable maps.
In~\cite{KSY-SQCI} we have shown that in the case, where the domain is the only super Riemann surface of genus zero \(M=\ProjectiveSpace[\C]{1|1}\) a moduli space of super stable maps can be constructed as a functor
\begin{equation}
	\begin{split}
		\underline{\overline{\mathcal{M}}_{0,k}(A)}\colon \cat{SPoint}^{op}&\to \cat{Top} \\
		C &\mapsto \bigcup_{\text{tree types }T} \bigcup_{\Set{A_\alpha}}\underline{\mathcal{M}_{T}(\Set{A_\alpha})}(C)
	\end{split}
\end{equation}
from the category of superpoints, that is real supermanifolds of dimension \(0|s\), to the category of topological spaces.
The restriction \(\underline{\mathcal{M}_{T}(\Set{A_\alpha})}\) of \(\underline{\overline{\mathcal{M}}_{0,k}(A)}\) to super stable maps of fixed tree type \(T\) and distribution \(\Set{A_\alpha}\) of homology to the vertices of \(T\) is the point functor of a quotient superorbifold.
The topology on \(\underline{\overline{\mathcal{M}}_{0,k}(A)}\) is given by a generalization of Gromov topology.
The space \(\underline{\overline{\mathcal{M}}_{0,k}(A)}(\R^{0|0})\) of reduced points coincides with the moduli space \(\overline{M}_{0,k}(A)\) of classical stable maps of genus zero with \(k\) marked points as a topological space and is hence compact.

The moduli spaces \(\mathcal{M}_T(\Set{A_\alpha})\) are obtained as quotients of open subsets of products of several copies of \(\ProjectiveSpace[\C]{1|1}\) and \(\mathcal{M}(A)\) by superconformal automorphisms of \(\ProjectiveSpace[\C]{1|1}\).
Yet, the torus action on \(\ProjectiveSpace[\C]{1|1}\) and \(\mathcal{M}(A)\) does not directly descend to a torus action on \(\mathcal{M}_T(\Set{A_\alpha})\).
Instead we prove an equivalent description of the moduli spaces \(\mathcal{M}_T(\Set{A_\alpha})\) using successive quotients, obtaining a quotient of a split supermanifold by a subgroup of the superconformal automorphisms which commute with the torus action and is compatible with gluings.
This is the second main result of the paper:
\begin{restatable*}{thm}{TorusActionMTA}\label{thm:TorusActionMTA}
	Assume that \(N\) is Kähler, \(T\) is a \(k\)-marked tree and \(\Set{A_\alpha}\) be a partition of the homology class \(A\in H_2(N)\) such that the moduli space \(\mathcal{M}_T(\Set{A_\alpha})\) can be constructed as a quotient superorbifold.
	Then the superorbifold \(\mathcal{M}_T(\Set{A_\alpha})\) possesses a torus action such that its fixed points are precisely its \(\R^{0|0}\)-points.

	Furthermore, assume that the tree \(T\) arises from the \(k_1+1\) marked tree \(T_1\) and the \(k_2+1\)-marked tree \(T_2\) by connecting the vertices with labels \(k_1+1\) and \(k_2+1\) by an edge.
	Denote by \(\Set{A_{1\alpha}}\) and \(\Set{A_{2\alpha}}\) the restrictions of \(\Set{A_\alpha}\) to \(T_1\) and \(T_2\) respectively.
	Then the moduli spaces \(\mathcal{M}_{T_1}(\Set{A_{1\alpha}})\) and \(\mathcal{M}_{T_2}(\Set{A_{2\alpha}})\) carry torus actions such that their fixed points are precisely their reduced points and the gluing map
	\begin{equation}
		gl\colon \mathcal{M}_{T_1}(\Set{A_{1\alpha}})\times_N \mathcal{M}_{T_2}(\Set{A_{2\alpha}}) \to \mathcal{M}_T(\Set{A_\alpha})
	\end{equation}
	is equivariant.
\end{restatable*}

As the torus actions on \(\mathcal{M}_T(\Set{A_\alpha})\) are obtained via quotients of split supermanifolds, we also obtain a description of the normal bundles of the fixed loci of the torus action in terms of short exact sequences of vector bundles.
The terms in the short exact sequence are explained geometrically as parts of supersymmetry transformations and are expressed in terms of spinor bundles, their sections and restrictions to marked points.

We expect that the torus actions on \(\mathcal{M}_T(\Set{A_\alpha})\) yield a continuous torus action \(\C^*\times \underline{\overline{\mathcal{M}}_{0,k}(A)}\to \underline{\overline{\mathcal{M}}_{0,k}(A)}\) leaving the reduced manifold \(\overline{M}_{0,k}(A)\) invariant.
The proof is left for further work.

The outline of the paper is as follows:
In Section~\ref{Sec:SplitnessOfModuliSpace} we show that the charts of moduli space of super \(\targetACI\)-holomorphic maps from a holomorphically split super Riemann surface to a Kähler manifold constructed in~\cite{KSY-SQCI} yields a split atlas.
Section~\ref{Sec:AutomorphismsOfPC11} specializes to the case where the domain is of genus zero and describes the action of the superconformal automorphisms of \(\ProjectiveSpace[\C]{1|1}\) on super \(\targetACI\)-holomorphic curve.
In particular, we show that every superconformal automorphisms of \(\ProjectiveSpace[\C]{1|1}\) can be written as a product of an element of \(\SGL_{\C}(2)\) and a supersymmetry transformation.
In Section~\ref{Sec:SplitnessStableMapsModuli} we use the splitness of the moduli spaces to define a torus action on the moduli space of super \(\targetACI\)-holomorphic curves that has classical \(\targetACI\)-holomorphic maps as fixed points.
Furthermore we extend the torus action to the moduli space of all super stable maps of genus zero.

\subsection*{Acknowledgments}
Enno Keßler and Artan Sheshmani thank the Simons Center for Geometry and Physics at Stony Brook for the invitation to the program on SuperGeometry and SuperModuli where this work was presented.
We thank Alexander Polishchuk for useful discussions.
 
\section{Splitness of the moduli space of super \texorpdfstring{\(J\)}{J}-holomorphic curves}\label{Sec:SplitnessOfModuliSpace}

In this section we recall the construction of charts for the moduli space \(\mathcal{M}(A)\) from a fixed super Riemann surface \(M\) to a fixed target \(N\) and show that this moduli space inherits a split structure from the space of maps under certain conditions on the domain and target.
Furthermore, we construct almost complex structures on the deformation and obstruction bundles.

In Section~\ref{SSec:SplitSupermanifolds} we recall the general notion of split supermanifold also in the setting of infinite-dimensional supermanifolds.
Section~\ref{SSec:DeformationsObstructions} recalls the definition of super \(\targetACI\)-holomorphic curve from a super Riemann surface and defines when its moduli space is smoothly obstructed.
The moduli space of super \(\targetACI\)-holomorphic curves is, if it is smoothly obstructed, a smooth subsupermanifold of a split subsupermanifold \(\hat{\Fieldspace}\) of the supermanifold of all maps \(M\to  N\), that is \(\mathcal{M}(A)\subset\hat{\Fieldspace}\subset\Fieldspace\).
We discuss a split atlas for \(\hat{\Fieldspace}\) in Section~\ref{SSec:SpaceOfMaps} and the induced split atlas of \(\mathcal{M}(A)\) in~Section~\ref{SSec:TheModuliSpace}.
In Section~\ref{SSec:ComplexStructures} we show that the almost complex structure \(\targetACI\) of the target \(N\) induces almost complex structures on \(\tangent{\Fieldspace}\), \(\tangent{\hat{\Fieldspace}}\) as well as the deformation and obstruction bundle on \(\mathcal{M}(A)\).

\subsection{Split Supermanifolds}\label{SSec:SplitSupermanifolds}
Supermanifolds of dimension \(m|n\) can be defined as ringed spaces \(M=(\Top{M}, \cO_M)\) which are locally isomorphic to \((\R^n, \cO_{\R^{m|n}}=\mathcal{C}^\infty(\R^m, \R)\otimes \bigwedge_n)\).
Here \(\Top{M}\) is a second countable, Hausdorff (euclidean) topological space and \(\cO_M\) is a sheaf of super rings.
The sheaf of super rings~\(\cO_{\R^{m|n}}\) is obtained as the tensor product of the sheaf of smooth \(\R\)-valued functions on \(\R^{m}\) with the real Graßmann algebra \(\bigwedge_n\) of \(n\) generators.
For more details on supergeometry in the ringed space formalism, we refer to the early overview~\cite{L-ITS}, the very concise~\cite{DM-SUSY}, as well as the first part of the textbook~\cite{EK-SGSRSSCA}.

Let \(\NilIdeal\subset \cO_M\) be the sheaf of ideals of nilpotent elements.
The ringed space \(\Red{M}=(\Top{M}, \faktor{\cO_M}{\NilIdeal})\) is a smooth manifold whose topological space coincides with the one of \(M\).
Furthermore, there is a canonical map of supermanifolds \(\Red{i}\colon \Red{M}\to M\) induced by the map \(\cO_M\to \faktor{\cO_M}{\NilIdeal}\) which is the identity on the topological spaces.
We obtain the short exact sequence
\begin{diag}\matrix[mat](m) {
		0 & \tangent{\Red{M}} & \Red{i}^*\tangent{M} & N_{M/\Red{M}} & 0\\
	} ;
	\path[pf] {
		(m-1-1) edge (m-1-2)
		(m-1-2) edge node[auto]{\(\differential{\Red{i}}\)} (m-1-3)
		(m-1-3) edge (m-1-4)
		(m-1-4) edge (m-1-5)
	};
\end{diag}
Here \(\tangent{\Red{M}}\) is a super vector bundle of rank \(m|0\) and the normal bundle \(N_{M/\Red{M}}\) is a super vector bundle of rank \(0|n\) over \(\Red{M}\).
The sheaf of sections of \(N_{M/\Smooth{M}}\) is given by \(\dual{\left(\faktor{\NilIdeal}{\NilIdeal^2}\right)}\).

Conversely, one can construct a supermanifold out of a vector bundle:
For a vector bundle \(E\to \Smooth{M}\) of rank \(n\) over a smooth manifold \(\Smooth{M}\) of dimension \(m\), the split supermanifold
\begin{equation}
	\Split E= \left(\Smooth{M}, \VSec{\bigwedge(\dual{E})}\right)
\end{equation}
constructed from \(E\) is a supermanifold of dimension \(m|n\).
Split supermanifolds have a coordinate atlas where the gluing functions are particularly simple.
Suppose that the vector bundle \(E\) is trivialized over the coordinate neighborhoods \(V_i\subset\Top{M}\) and denote the coordinates on \(V_i\) by \(y_j^a\) and the real framing of \(E\) over \(V_i\) by \(e_{i,\alpha}\).
If \(V_i\cap V_j\) is non-empty we have coordinate change functions \(f_{ij}\) and a linear map \(F_{ij}\) between the framings
\begin{align}
	y_j^a &= f_{ij}^a(y_i), &
	e_{j,\alpha} &= \tensor[_\alpha]{F}{_j_i^\beta}(x_i)e_{i,\beta}.
\end{align}
The split supermanifold \(\Split E\) then has the atlas \(\Set{U_i}\) with local coordinates \((x_i^a, \eta_i^\alpha)\) with coordinate changes
\begin{align}\label{eq:GluingSplitSuperManifold}
	x_j^a &= f_{ij}^a(x_i), &
	\eta_j^\alpha &= \eta_i^\beta \tensor[_\beta]{F}{_i_j^\alpha}(x_i).
\end{align}
In contrast to arbitrary gluing functions of supermanifolds, the gluing functions for the even coordinates \(x_i^a\) do not depend on \(\eta_i^\alpha\) and \(\eta_j^\alpha\) depends linearly on \(\eta_i^\beta\).
The normal bundle \(N_{\Split E/\Smooth{M}}\) is isomorphic to \(\Pi E\) where \(\Pi\) is the parity reversal functor on super vector bundles.

By Batchelor's Theorem, see~\cite{B-SOSM}, every smooth supermanifold \(M\) allows to construct splittings, that is smooth super diffeomorphisms \(s\colon \Split N_{\Smooth{M}/M} \to M\).
But importantly, there are different splittings and no canonical choice.
In this work, we will specify a splitting of a supermanifold \(M\) by constructing an atlas for \(M\) such that the gluing functions have the special form as in Equations~\eqref{eq:GluingSplitSuperManifold}.

Complex structures on supermanifolds can be specified equivalently via a complex atlas or via an almost complex structure on the tangent bundle of a smooth supermanifold, see~\cites{V-ACSOM}[Section~7]{EK-SGSRSSCA}.
The reduced space of an (almost) complex super manifold is an (almost) complex manifold and the normal bundle carries an almost complex structure.
The normal bundle is holomorphic if the reduced manifold is complex.
If \(E\to \Smooth{M}\) is a holomorphic vector bundle \(\Split E\) is a complex super manifold.
But Batchelor's theorem does not hold for complex supermanifolds.
That is, in general, one cannot find biholomorphic maps \(\Split N_{\Smooth{M}/M}\to M\) even though there exist diffeomorphisms \(\Split N_{\Smooth{M}/M}\to M\).
Obstructions to the existence of holomorphic splittings have been studied for example in~\cites{G-HGM}[Chapter~4, §2]{M-GFTCG}{DW-SMNP}.
Explicit examples of non-split holomorphic supermanifolds have been constructed in~\cites[Chapter~4, §2, 10.~Examples]{M-GFTCG}{CNR-NPCYSMP2}.

Besides the ringed space formalism we also need the functor of points approach to supermanifolds, see~\cites{M-IDCSM}{S-GAASTS} as well as the earlier~\cites{S-DSP}{V-MSM}.
The advantage of the Molotkov--Sachse approach to supermanifolds is to allow infinite-dimensional supermanifolds.
A supermanifold in the Molotkov--Sachse approach is a representable functor \(\underline{M}\colon \cat{SPoint^{op}}\to \cat{Man}\) from the opposite of the category of super points to a suitable category of manifolds.
Here the category of super points consists of the superpoints \(\R^{0|s}\) with morphisms between them.
The category \(\cat{SPoint^{op}}\) is equivalent to the category of finitely generated Graßmann algebras.

\begin{lemma}\label{lemma:NormalBundleFunctorOfPoints}
	Let \(M\) be a finite-dimensional supermanifold and \(\underline{M}\) its functor of points.
	Then \(\underline{M}(\R^{0|0})=\Red{M}\) and \(\underline{M}(\R^{0|1})=N_{M/\Red{M}}\) where the projection \(N_{M/\Red{M}}\to \Red{M}\) is given by \(\underline{M}(\R^{0|0}\to \R^{0|1})\).
\end{lemma}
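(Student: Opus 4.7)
The plan is to unfold, in the ringed-space picture, what a morphism of supermanifolds $\R^{0|s}\to M$ amounts to for $s=0$ and $s=1$, and to identify the resulting data with the claimed geometric object. The essential inputs are that $\cO_{\R^{0|0}}=\R$ is reduced, while $\cO_{\R^{0|1}}=\R[\eta]/(\eta^{2})$ has nilpotent ideal $(\eta)$ which is one-dimensional, purely odd, and squares to zero.

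For $\underline{M}(\R^{0|0})$ I would argue as follows. A morphism $f\colon\R^{0|0}\to M$ consists of a point $p\in\Top{M}$ together with a map of stalks $f^{\#}\colon\cO_{M,p}\to\R$; since $\R$ has no nilpotents, $f^{\#}$ annihilates $\NilIdeal_{p}$ and hence factors uniquely through $\cO_{\Red{M},p}$. This produces a natural bijection between $\R^{0|0}$-points of $M$ and $\R^{0|0}$-points of $\Red{M}$, which is precisely the point set of $\Red{M}$.

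For $\underline{M}(\R^{0|1})$ I would fix a morphism $g\colon\R^{0|1}\to M$ and work in local coordinates $(x^{a},\xi^{\alpha})$ centered at the underlying point $p\in\Red{M}$. Because $(\eta)^{2}=0$, the map $g^{\#}$ factors through $\cO_{M,p}/\NilIdeal_{p}^{2}$. Parity then forces $g^{\#}(x^{a})$ to be even and nilpotent, hence equal to $x^{a}(p)$, while $g^{\#}(\xi^{\alpha})=c^{\alpha}\eta$ for uniquely determined real numbers $c^{\alpha}$. This yields a local bijection between $\R^{0|1}$-points of $M$ and pairs $(p,(c^{\alpha}))$. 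To make the identification canonical and coordinate-independent, I would use the intrinsic description recalled just before the lemma: the restriction of $g^{\#}$ to $\NilIdeal_{p}/\NilIdeal_{p}^{2}$ takes values in $(\eta)\subset\R[\eta]/(\eta^{2})$ and thereby defines an element of the fiber of $\dual{(\NilIdeal/\NilIdeal^{2})}$ at $p$, which is by definition the fiber of $N_{M/\Red{M}}$ at $p$; the $c^{\alpha}$ are the coordinates of this element in the dual basis to $[\xi^{\alpha}]$. Naturality under coordinate changes then patches the local bijections into a global identification with the total space of $N_{M/\Red{M}}$ as a smooth manifold.

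For the last assertion, the morphism $\R^{0|0}\to\R^{0|1}$ corresponds to the ring map $\eta\mapsto 0$, so by functoriality the induced map $\underline{M}(\R^{0|1})\to\underline{M}(\R^{0|0})$ sends $(p,(c^{\alpha}))$ to $p$, which is exactly the bundle projection. The only step that requires genuine care is the coordinate-free identification of the odd component of $g^{\#}$ with a normal vector; everything else is straightforward bookkeeping with parities and nilpotents.
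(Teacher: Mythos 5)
Your proof is correct, and its computational core coincides with the paper's: both arguments rest on the fact that the even part of \(\cO_{\R^{0|1}}=\R[\eta]/(\eta^{2})\) is just \(\R\) and its odd part is \(\R\eta\), so that in local coordinates an \(\R^{0|0}\)-point of \(M\) is a tuple \((p^{a})\) and an \(\R^{0|1}\)-point is a tuple \((p^{a},c^{\alpha})\), and both obtain the bundle projection by functoriality applied to \(\eta\mapsto 0\). Where you genuinely differ is in how the fiber data \((c^{\alpha})\) is glued into \(N_{M/\Red{M}}\). The paper works entirely inside an atlas: it takes the frames \(n_{i,\alpha}\) of the normal bundle obtained by restricting the odd coordinate vector fields along the inclusion \(\Red{M}\hookrightarrow M\), sends the tuple to \(p_{i}^{\alpha}n_{i,\alpha}\), and then verifies by hand that the coordinate changes of \(\R^{0|1}\)-points reproduce the transition matrices of these frames. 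You instead invoke the canonical identification of the sections of \(N_{M/\Red{M}}\) with \(\dual{\left(\faktor{\NilIdeal}{\NilIdeal^{2}}\right)}\), recalled in the paper just before the lemma, and observe that \(g^{\#}\) restricted to the stalk \({\NilIdeal}_{p}\) annihilates \({\NilIdeal}_{p}^{2}\) and takes values in \((\eta)\cong\R\), so that it is intrinsically a point of that dual bundle; since this functional is defined without choosing coordinates, no transition-function computation is needed. The one step you should write out (you flag it yourself) is why your functional lives in the fiber at \(p\), i.e.\ is linear over evaluation at \(p\): for even \(f\) one has \(g^{\#}(f)=f(p)\) exactly, because \(\R[\eta]/(\eta^{2})\) has no nonzero even nilpotents, whence \(g^{\#}(fn)=f(p)\,g^{\#}(n)\) for \(n\in{\NilIdeal}_{p}\), so the pairing descends to the fiber of \(\faktor{\NilIdeal}{\NilIdeal^{2}}\) at \(p\). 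With that line inserted your argument is complete. The trade-off: your route is shorter and canonical, while the paper's atlas-level route produces exactly the frame description of \(N_{M/\Red{M}}\) and its transition functions that is reused later, for instance in the proof of Proposition~\ref{prop:TorusActionSplitness}.
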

\begin{proof}
	Suppose that the supermanifold \(M\) is given by an atlas \(\Set{U_i}\) such that the open subsupermanifold \(U_i\) is isomorphic to \(\R^{m|n}\) with coordinates \((x_i^a, \eta_i^\alpha)\) and coordinate changes \(f_{ij}\) such that
	\begin{align}
		x_j^a &= \tensor{F}{_i_j^a}(x_i, \eta_i) = \tensor[_0]{F}{_i_j^a}(x_i) + \eta_i^\mu \eta_i^\nu \tensor[_\nu_\mu]{F}{_i_j^a}(x_i) + \dotsb, \\
		\eta_j^\alpha &= \tensor{F}{_i_j^\alpha}(x_i, \eta_i) = \eta_i^\mu \tensor[_\mu]{F}{_i_j^\alpha}(x_i) + \dotsb.
	\end{align}
	The reduced manifold \(\Red{M}\) is given by the atlas \(\Set{V_i}\) such that \(V_i\) and \(U_i\) have the same topological space but \(V_i\) is isomorphic to \(\R^m\) with coordinates \((y_i^a)\) and coordinate changes \(y_j^a= \tensor[_0]{F}{_i_j^a}(y_i)\).
	The inclusion \(i\colon \Red{M}\to M\) is given in the local coordinates by
	\begin{align}
		i^\# x_i^a &= y_i^a, &
		i^\# \eta_i^\alpha &= 0.
	\end{align}
	The tangent bundle \(\tangent{M}\) has frames \(\partial_{x_i^a}, \partial_{\eta_i^\alpha}\) such that
	\begin{equation}
		\begin{pmatrix}
			\partial_{x_i^a} \\
			\partial_{\eta_i^\alpha}\\
		\end{pmatrix}
		=
		\begin{pmatrix}
			\frac{\partial \tensor{F}{_i_j^b}(x_i, \eta_i)}{\partial_{x_i^a}} & \frac{\partial \tensor{F}{_i_j^\beta}(x_i, \eta_i)}{\partial_{x_i^\alpha}} \\
			\frac{\partial \tensor{F}{_i_j^b}(x_i, \eta_i)}{\partial_{\eta_i^a}} & \frac{\partial \tensor{F}{_i_i^\beta}(x_i, \eta_i)}{\partial_{\eta_i^\alpha}}
		\end{pmatrix}
		\begin{pmatrix}
			\partial_{x_j^b} \\
			\partial_{\eta_j^\beta}\\
		\end{pmatrix}.
	\end{equation}
	Consequently, the normal bundle has frames \(n_{i,\alpha} = i^*\partial_{\eta_i^\alpha}\) over \(V_i\) such that \(n_{i,\alpha} = \tensor[_\alpha]{F}{_i_j^\beta}(x_i) n_{j,\beta}\).

	Let now \(p\colon \R^{0|0}\to M\) be an \(\R^{0|0}\) point of \(M\) which is given in the coordinates \((x_i^a, \eta_i^\alpha)\) by
	\begin{align}
		p^\# x_i^a &= p_i^a, &
		p^\# \eta_i^\alpha &= 0.
	\end{align}
	That is, the set of \(\R^{0|0}\) points in \(U_i\) is given by the point \((p_i^a)\) in \(\R^{m}\).
	The change of coordinates
	\begin{equation}
		p_i^a
		= p^\# x_i^a
		= p^\# \tensor{F}{_j_i^a}(x_j, \eta_j)
		= \tensor[_0]{F}{_j_i^a}(p_j)
	\end{equation}
	implies that the manifold of \(\R^{0|0}\)-points coincides with the manifold \(\Red{M}\).

	Let \(\lambda\) be the odd coordinate of \(\R^{0|1}\) and \(p\colon \R^{0|1}\to M\) be an \(\R^{0|1}\)-point of \(M\).
	In the coordinates \((x_i^a, \eta_i^\alpha)\), the \(\R^{0|1}\)-point \(p\) is given by
	\begin{align}
		p^\# x_i^a &= p_i^a, &
		p^\# \eta_i^\alpha &= \lambda p_i^\alpha .
	\end{align}
	That is, the set of \(\R^{0|1}\)-points in \(U_i\) is given by the points \((p_i^a, p_i^\alpha)\) in \(\R^{m}\times \R^n\).
	The inclusion \(l\colon \R^{0|0}\to \R^{0|1}\) is given by \(l^\#\lambda=0\).
	Hence the \(\R^{0|1}\)-point \(p\) given by \((p_i^a, p_i^\alpha)\) is mapped under \(\underline{M}(\R^{0|0}\to \R^{0|1})\) to \(p\circ l\) given by \((p_i^a)\).

	We map the point \((p_i^a, p_i^\alpha)\) of \(\underline{U_i}(\R^{0|1})\) to the point \((p_i^a, p_i^\alpha n_{i,\alpha})\) in the trivialization of the normal bundle above \(V_i\).
	The claim then follows from the following verification of that the change of coordinates for \(p\) agree with the gluing functions of \(N_{M/\Red{M}}\):
	\begin{align}
		p_j^\alpha n_{j,\alpha}
		= p_i^\beta \tensor[_\beta]{F}{_i_j^\alpha} \tensor[_\alpha]{F}{_j_i^\gamma} n_{i_\gamma}
		= p_i^\beta n_{i,\beta}.
	\end{align}
\end{proof}

In light of Lemma~\ref{lemma:NormalBundleFunctorOfPoints}, we will say that \(\underline{M}(\R^{0|0})\) is the reduced manifold and \(\underline{M}(\R^{0|1})\) is the normal bundle of an infinite-dimensional supermanifold~\(M\).
If \(E\to \Smooth{M}\) is a vector bundle of possibly infinite rank over a classical possibly infinite-dimensional manifold~\(\Smooth{M}\), the split supermanifold constructed from \(E\) is given by an atlas such that all gluing functions are in a form as in~\eqref{eq:GluingSplitSuperManifold}.
More explicitly, assume that the vector bundle \(E\) is trivialized over the open sets \(V_i\subset{\Smooth{M}}\), has the vector space \(G\) as fibers and gluing functions
\begin{align}
	f_{ij}\colon V_i&\to V_j, &
	F_{ij}\colon V_i\times G &\to G.
\end{align}
Then the supermanifold \(\underline{\Split E}\) is constructed from local charts \(\underline{U_i}\) such that
\begin{equation}
	\underline{U_i}(B) = V_i\otimes {\left(\cO_B\right)}_0 \oplus G\otimes {\left(\cO_B\right)}_1
\end{equation}
and gluing maps
\begin{equation}
	f_{ij}\otimes {\left(\cO_B\right)}_0\oplus F_{ij}\otimes {\left(\cO_B\right)}_1\colon \underline{U_i}(B)\to \underline{U_j}(B).
\end{equation}

\begin{ex}[The holomorphically split supermanifold {\(\ProjectiveSpace[\C]{1|1}\)}]\label{ex:P11isSplit}
	Recall that the \(B\)-points of \(\ProjectiveSpace[\C]{1|1}\) classify \(B\)-linear maps \(\C^{1|0}\times B\to\C^{2|1}\).
	Such linear maps are given by tuples \([Z^0:Z^1:\Theta]\) for \(Z^i\in{\left(\cO_B\right)}_0\) with one of them invertible and \(\Theta\in{\left(\cO_B\right)}_1\).
	Two two such tuples are equivalent if they differ by an even invertible element \(\lambda\in{\left(\cO_B\right)}_0^*\), that is
	\begin{equation}
		[Z_1:Z_2:\Theta] \sim [\lambda Z_1: \lambda Z_2 : \lambda \Theta].
	\end{equation}
	Particularly important points of \(\ProjectiveSpace[\C]{1|1}\) are \(0=[0:1:0]\), \(1_\epsilon=[1:1:\epsilon]\) and \(\infty=[1:0:0]\).

	The super projective space \(\ProjectiveSpace[\C]{1|1}\) carries the structure of a complex supermanifold given by the two charts \(V_i=\C^{1|1}\), \(i=1,2\) with coordinates \((z_i, \theta_i)\) given by
	\begin{align}
		z_1 &= \frac{Z_1}{Z_2}, &
		\theta_1 &= \frac{\Theta}{Z_2}, &
		z_2 &= -\frac{Z_2}{Z_1}, &
		\theta_2 &= \frac{\Theta}{Z_1}.
	\end{align}
	The coordinate change between \((z_1, \theta_1)\) and \((z_2, \theta_2)\) is given by
	\begin{align}\label{eq:P11CoordinateChange}
		z_1 &= -\frac{1}{z_2}, &
		\theta_1 &= -\frac{\theta_2}{z_2}; &
		z_2 &= -\frac{1}{z_1}, &
		\theta_2 &= \frac{\theta_1}{z_1}.
	\end{align}
	We will explain the reason for the additional minus signs in Example~\ref{ex:SJConP11} below.

	There is a unique holomorphic map \(i\colon\ProjectiveSpace[\C]{1}\to \ProjectiveSpace[\C]{1|1}\) which is the identity on topological spaces.
	We will denote the coordinates on \(\ProjectiveSpace[\C]{1}\) likewise by \(z_1\), \(z_2\).
	Then the map \(i\) is given by
	\begin{align}
		i^\# z_1&= z_1, &
		i^\# \theta_1 &= 0, &
		i^\# z_2&= z_2, &
		i^\# \theta_2 &= 0.
	\end{align}
	By the equations of coordinate change we have
	\begin{align}
		\partial_{z_1} &= z_2^2\partial_{z_2} + z_2\theta_2\partial_{\theta_2} &
		\partial_{\theta_1} &= -z_2\partial_{\theta_2}
	\end{align}
	and hence \(i^*\partial_{z_1}=z_2^2i^*\partial_{z_2}\) and \(i^*\partial_{\theta_1} = -z_2i^*\partial_{\theta_2}\).
	That is, we have a short exact sequence of vector bundles over \(\ProjectiveSpace[\C]{1}\)
	\begin{diag}\matrix[mat](m) {
			0 & \tangent{\ProjectiveSpace[\C]{1}}=\cO(2) & i^*\tangent{\ProjectiveSpace[\C]{1|1}} & N_{\ProjectiveSpace[\C]{1|1}/\ProjectiveSpace[\C]{1}}=\cO(1) & 0 \\
		} ;
		\path[pf] {
			(m-1-1) edge (m-1-2)
			(m-1-2) edge node[auto]{\(\differential{i}\)} (m-1-3)
			(m-1-3) edge (m-1-4)
			(m-1-4) edge (m-1-5)
		};
	\end{diag}
	Here \(\cO(1)\) and \(\cO(2)\) are the canonical line bundles on \(\ProjectiveSpace[\C]{1}\) with two resp.\ three sections.
	Notice that \(\cO(1)\otimes \cO(1)=\cO(2)=\tangent{\ProjectiveSpace[\C]{1}}\).
	For this reason, we also denote \(S=\cO(1)\) the spinor bundle on \(\ProjectiveSpace[\C]{1}\), compare~\cite[Appendix~A]{EK-SGSRSSCA}.

	Hence, \(\ProjectiveSpace[\C]{1|1} = \Split S = \Split \cO(1)\).
\end{ex}

\subsection{Deformations and Obstructions}\label{SSec:DeformationsObstructions}
In~\cite{KSY-SJC} we have defined a super \(\targetACI\)-holomorphic curve to be a map \(\Phi\colon M\to N\) from a super Riemann surface \(M\) to an almost Kähler manifold \((N, \omega, \targetACI)\) satisfying
\begin{equation}\label{eq:DJBar}
	\DJBar\Phi
	= \frac12\left.\left(\differential{\Phi} + \ACI\otimes \targetACI \differential{\Phi}\right)\right|_\cD
	= 0.
\end{equation}
In this article we will mainly be concerned with the case \(M=\ProjectiveSpace[\C]{1|1}\), that is, the only super Riemann surface of genus zero.
However, for the moment it is sufficient to suppose that \(M\) is a super Riemann surface over \(B=\R^{0|0}\).
In that case, there is a holomorphic embedding \(i\colon \Smooth{M}\to M\) where \(\Smooth{M}\) is the reduced space of \(M\), a Riemann surface.
The super Riemann surface \(M\) is then completely determined by the holomorphic structure on \(\Smooth{M}\), or equivalently a conformal class of metrics, and the spinor bundle \(S=i^*\cD\).
With respect to this embedding we can decompose any map \(\Phi\colon M\times C\to N\) parametrized by a superpoint \(C=\R^{0|s}\) in the component fields
\begin{equation}\label{eq:ComponentFields}
	\begin{aligned}
		\varphi &= \Phi\circ i\colon \Smooth{M}\times C \to N, \\
		\psi &= \left.i^*\differential{\Phi}\right|_\cD \in \VSec{\dual{S}\otimes \varphi^*\tangent{N}}, \\
		F &= i^*\DLaplace \Phi \in\VSec{\varphi^*\tangent{N}},
	\end{aligned}
\end{equation}
see~\cite[Chapter~12.1]{EK-SGSRSSCA}.
The condition~\eqref{eq:DJBar} for \(\Phi\) to be a super \(\targetACI\)-holomorphic curve can be rewritten as
\begin{align}\label{eq:ComponentFieldEquations}
	\begin{aligned}
		0 &= \DelJBar\varphi + \frac14\Tr_{\dual{g}_S}\left(\gamma\otimes\mathfrak{j}\targetACI\right)\psi, &
		0 &= F, \\
		0 &= \left(1+\ACI\otimes\targetACI\right)\psi, &
		0 &= \Dirac\psi - \frac13SR^N(\psi),
	\end{aligned}
\end{align}
see~\cite[Corollary~2.5.3]{KSY-SJC}.
In particular, for \(C=\R^{0|0}\) the map \(\phi\colon \Smooth{M}\to N\) is a classical \(\targetACI\)-holomorphic curve and any \(\targetACI\)-holomorphic curve \(\phi\) induces a super \(\targetACI\)-holomorphic curve \(\Phi_C\colon M\times C\to N\) parametrized by \(C\) and with component fields \(\varphi=\phi\), \(\psi=0\) and \(F=0\).

\begin{ex}[Super \(\targetACI\)-holomorphic curves on {\(\ProjectiveSpace[\C]{1|1}\)}]\label{ex:SJConP11}
	The projective superspace \(\ProjectiveSpace[\C]{1|1}\) is the only super Riemann surface of genus zero, see~\cites{CR-SRSUTT}[Theorem~9.4.1]{EK-SGSRSSCA}.
	The distribution \(\cD\) of the super Riemann surface structure of \(\ProjectiveSpace[\C]{1|1}\) is generated in the superconformal coordinates from Example~\ref{ex:P11isSplit} by
	\begin{align}
		D_1 &= \partial_{\theta_1} + \theta_1\partial_{z_1}, &
		D_2 &= \partial_{\theta_2} + \theta_2\partial_{z_2}.
	\end{align}
	The coordinate change~\eqref{eq:P11CoordinateChange} implies \(D_1=-z_2D_2\).
	Note that the minus signs in~\eqref{eq:P11CoordinateChange} are crucial.
	The pullbacks \(s_i=i^*D_i\) of \(D_i\) along \(i\colon \ProjectiveSpace[\C]{1}\to \ProjectiveSpace[\C]{1|1}\) are local holomorphic frames for \(S\) over the coordinate neighborhoods \(V_i\).

	Let now \(\Phi\colon \ProjectiveSpace[\C]{1|1}\times B\to N\) be a super \(\targetACI\)-holomorphic curve into a Kähler manifold~\(N\).
	Its component fields \(\varphi\colon \ProjectiveSpace[\C]{1}\to N\), \(\psi\in\VSec{\dual{S}\otimes \varphi^*\tangent{N}}\) and \(F\in\VSec{\varphi^*\tangent{N}}\) defined in~\eqref{eq:ComponentFields} satisfy the Equations~\eqref{eq:ComponentFieldEquations}.
	As the target is Kähler this implies that \(\varphi\) is a holomorphic map, \(\psi\) a holomorphic section of \(\dual{S}\otimes_\C \varphi^*\tangent{N}\), see~\cite[Lemma~2.3.6]{KSY-SJC}.
	Picking local coordinates \(X^a\) on \(N\) we can write the map \(\Phi\) in local coordinates as
	\begin{equation}
		\Phi^\#X^a
		= \varphi^a(z_1) + \theta_1\psi_1^a(z_1)
		= \varphi^a(z_2) + \theta_2 \psi_2^a(z_2).
	\end{equation}
	Then the component fields have the form
	\begin{align}
		\varphi^\#X^a
		&= \varphi^a(z_1)
		= \varphi^a(z_2), &
		\psi
		&= \dual{s}_1\otimes \left(\psi_1^a(z_1) \varphi^*\partial_{X^a}\right)
		= \dual{s}_2\otimes \left(\psi_2^a(z_2) \varphi^*\partial_{X^a}\right),
	\end{align}
	where \(\dual{s}_i\) is the dual frame to \(s_i\).
\end{ex}

Under certain conditions, we have constructed the moduli space of super \(\targetACI\)-holomorphic curves as a subsupermanifold of the infinite-dimensional supermanifold  \(\Fieldspace\) of all maps \(\Phi\colon M\to N\).
The supermanifold \(\Fieldspace\) is constructed in the Molotkov--Sachse approach as a functor
\begin{equation}
	\underline{\Fieldspace}\colon \cat{SPoint}^{op}\to \cat{Man}
\end{equation}
from the category of superpoints to the category of (Fréchet-) manifolds.
Charts are constructed in~\cite[Section~3.2]{KSY-SJC} via the exponential map on the target \(N\).

The moduli space of super \(\targetACI\)-holomorphic curves is a subfunctor of \(\underline{\Fieldspace}\) defined by
\begin{equation}
	\underline{\mathcal{M}(A)}(C) = \Set{\Phi\colon M\times C\to N \given \DJBar\Phi=0, [\im \Red{\Phi}]=A}.
\end{equation}
Here \(A\in H_2(N, \Z)\) is the fixed homology class of the image.
Without further assumptions the functor \(\underline{\mathcal{M}(A)}\) takes values in the category of topological spaces:
\begin{equation}
	\underline{\mathcal{M}(A)}\colon \cat{SPoint}^{op}\to \cat{Top}.
\end{equation}

The moduli space \(\underline{\mathcal{M}(A)}\) is the zero locus of the operator \(\DJBar\) which is a section of a vector bundle \(\Targetbundle\to \Fieldspace\).
Here \(\Targetbundle\) is the vector bundle such that the fiber above \(\Phi\colon M\to N\) is given by
\begin{equation}
	\underline{\Targetbundle}_\Phi(C) = {\left({\VSec{\dual{\cD}\otimes\Phi^*\tangent{N}}}^{0,1}\otimes\cO_C\right)}_0
\end{equation}
In~\cite[Section~3.3]{KSY-SJC} we have shown that \(\Targetbundle\) is indeed a vector bundle and that \(\mathcal{S}=\DJBar\) is a section.
Here, we are interested in the kernel \(\ker \differential{\mathcal{S}}\) and cokernel \(\Coker \differential{\mathcal{S}}\) of its differential
\begin{equation}
	\underline{\differential{\mathcal{S}}}\colon \underline{\tangent{\Fieldspace}}\to \underline{\mathcal{S}^*\tangent{\Targetbundle}}.
\end{equation}
In general, the rank of \(\ker \differential{\mathcal{S}}\) and \(\Coker\differential{\mathcal{S}}\) is not constant, but might depend on the point \(\Phi\) in \(\underline{\Fieldspace}\).
In particular, \(\ker \differential{\mathcal{S}}\) and \(\Coker\differential{\mathcal{S}}\) are not super vector bundles.
However the ranks of \(\ker\differential{\mathcal{S}}\) and \(\Coker\differential{\mathcal{S}}\) are not completely unrelated.
In~\cite{KSY-SJC}, we have shown that for any super \(\targetACI\)-holomorphic curve \(\Phi_C\colon M\times C\to N\) with component fields \(\varphi=\phi\times C\), \(\psi=0\) and \(F=0\) we have
\begin{equation}
	\rk \ker\differential{\mathcal{S}} - \rk\Coker\differential{\mathcal{S}}
	= \ind D_\phi|\ind \Dirac^{1,0}
	= 2n(1-p) + 2\left<c_1(\tangent{N}), A\right>|2\left<c_1(\tangent{N}), A\right>.
\end{equation}
In particular, the index depends only on the homology class \(A=[\im\phi]\in H_2(N)\).

\begin{defn}\label{defn:SmoothlyObstructed}
	We say that the moduli space \(\mathcal{M}(A)\) is smoothly obstructed if
	\begin{itemize}
		\item
			\(\mathcal{M}(A)\) is a smooth subsupermanifold of \(\Fieldspace\) of real dimension \(2d_e|2d_0\) with embedding \(j\colon \mathcal{M}(A)\to \Fieldspace\), and
		\item
			the tangent bundle \(\tangent{\mathcal{M}(A)}\) is isomorphic to \(j^*\ker\differential{\mathcal{S}}\).
	\end{itemize}
\end{defn}
In this situation, we obtain by pullback along \(j\) an exact sequence of vector bundles
\begin{diag}\matrix[mat](m) {
		0 & j^*\ker\differential{\mathcal{S}} & j^*\tangent{\Fieldspace} & j^*\mathcal{S}^*\Targetbundle & j^*\Coker\differential{\mathcal{S}} & 0 \\
	} ;
	\path[pf] {
		(m-1-1) edge (m-1-2)
		(m-1-2) edge (m-1-3)
		(m-1-3) edge node[auto]{\(j^*\differential{\mathcal{S}}\)} (m-1-4)
		(m-1-4) edge (m-1-5)
		(m-1-5) edge (m-1-6)
	};
\end{diag}
We call  \(j^*\ker\differential{\mathcal{S}}\to\mathcal{M}(A)\) the deformation bundle \(\DefBundle\) and \(j^*\Coker\differential{\mathcal{S}}\to \mathcal{M}(A)\) the obstruction bundle \(\ObsBundle\).
A section of \(\DefBundle\) over \(\Phi_C\in\underline{\mathcal{M}(A)}(C)\) gives an infinitesimal deformation of the map \(\Phi_C\) that satisfies the equation of super \(\targetACI\)-holomorphic curves to first order.
The obstruction bundle measures how far \(\differential{\mathcal{S}}\) is not surjective.

In~\cite{KSY-SJC}, we have shown that there are targets \(N\) for which \(\differential{\mathcal{S}}\) is always surjective.
In that case the obstruction bundle is zero and the above assumptions are satisfied, that is, \(\mathcal{M}(A)\) is a subsupermanifold of \(\Fieldspace\).

\subsection{The space of maps}\label{SSec:SpaceOfMaps}
In this section we first recall the construction of the supermanifold \(\Fieldspace\) of all maps \(\Phi\colon M\to N\) from~\cite{KSY-SJC}.
We then show that there is a split supermanifold \(\hat{\Fieldspace}\) such that \(\mathcal{M}(A)\subset \hat{\Fieldspace}\subset\Fieldspace\).

Charts on \(\Fieldspace\) have been constructed in~\cite[Chapter~3.2]{KSY-SJC}.
Locally, around \(\Phi\colon M\to N\) given by \(\varphi=\phi\), \(\psi=0\), \(F=0\) the functor \(\underline{\Fieldspace}\) is isomorphic to an open subfunctor \(\underline{U}_\phi\) around zero of
\begin{equation}
	\begin{split}
		\underline{W_\phi} = \underline{\VSec{\Phi^*\tangent{N}}}\colon \cat{SPoint}^{op}&\to \cat{Man} \\
		C &\mapsto {\left(\VSec{\Phi^*\tangent{N}}\otimes\cO_C\right)}_0
	\end{split}
\end{equation}
The module \({\left(\VSec{\Phi^*\tangent{N}}\otimes\cO_C\right)}_0\) which appears on the right hand side is isomorphic to \({\VSec{\Phi_C^*\tangent{N}}}_0\), where \(\Phi_C\colon M\times C\to N\) is the base extension of \(\Phi\).
As explained in~\cite[Chapter~13.10]{EK-SGSRSSCA}, any section~\(X\in\underline{W_\phi}(C)={\VSec{\Phi_C^*\tangent{N}}}_0\) is completely determined by its component fields with respect to the chosen underlying even manifold \(i_C\colon M\times C\to N\).
\begin{align}
	\xi &= i_C^*X \in{\VSec{\varphi_C^*\tangent{N}}}_0, \\
	\zeta &= s^\alpha\otimes i_C^*\nabla_{F_\alpha}X \in{\VSec{\dual{S}\otimes\varphi_C^*\tangent{N}}}_0, \\
	\sigma &= -\frac12i_C^*\DLaplace X \in{\VSec{\varphi_C^*\tangent{N}}}_0.
\end{align}
This allows to equip \(\underline{W_\phi}\) with the structure of a super Fréchet domain as follows:
\begin{equation}
	\begin{split}
		\underline{W_\phi}(C)
		&= {\left(\VSec{\Phi^*\tangent{N}}\otimes\cO_C\right)}_0 \\
		&\cong \VSec{\phi^*\tangent{N}}\otimes{\left(\cO_C\right)}_0 \oplus \VSec{\dual{S}\otimes\phi^*\tangent{N}}\otimes{\left(\cO_C\right)}_1 \oplus \VSec{\phi^*\tangent{N}}{\otimes\left(\cO_C\right)}_0.
	\end{split}
\end{equation}

The exponential map on \(N\) allows to use a subsuperdomain of \(\underline{W_\phi}\) as a chart for the Fréchet supermanifold \(\underline{\Fieldspace}\) as follows:
The exponential map on the Riemannian manifold \(N\) is defined on an open neighborhood \(U\subset \tangent{N}\) of the zero section.
It yields a diffeomorphism between \(U\) and an open neighborhood \(V\) of the diagonal in \(N\times N\) via \((\pi, \exp)\), where \(\pi\colon \tangent{N}\to N\) is the projection of the vector bundle.
Any vector field \(X\in{\left(\VSec{\Phi_C^*\tangent{N}}\otimes\right)}_0\) yields a map \(\tilde{X}\colon M\to \tangent{N}\) such that \(\pi\circ\tilde{X}=\Phi\).
If the image of the reduction of \(X\) lies in \(U\), that is, \(\im \Red{X}\subset U\), we define \(\exp_\Phi X\colon M\to N\) as \(\exp\circ\tilde{X}\).
\begin{diag}\matrix[mat](m) {
		\Phi_C^*\tangent{N} & \tangent{N} & N \\
		M\times C & N & \\
	} ;
	\path[pf] {
		(m-1-1) edge (m-1-2)
			edge (m-2-1)
		(m-1-2) edge node[auto]{\(\exp\)} (m-1-3)
			edge (m-2-2)
		(m-2-1) edge[bend left=40] node[auto]{\(X\)} (m-1-1)
			edge node[auto]{\(\tilde{X}\)} (m-1-2)
			edge[bend right=10,dotted] node[auto, swap]{\(\exp_\Phi X\)} (m-1-3)
			edge node[auto,swap]{\(\Phi_C\)} (m-2-2)
	};
\end{diag}
Conversely, for any map \(\Phi'\colon M\times C\to N\) such that \(\im (\Red{\Phi}, \Red{\Phi'})\subset V\) there is a vector field \(X\in{\VSec{\Phi_C^*\tangent{N}}}_0\) such that \(\exp_\Phi X= \Phi'\).
Hence, we obtain a bijection between the open set
\begin{equation}
	\underline{U_\phi}(C) = \Set{X\in{\VSec{\Phi_C^*\tangent{N}}}_0\given \im \Red{X}\subset U}
\end{equation}
and the open Fréchet superdomain
\begin{equation}
	\underline{V_\phi}(C) = \Set{\Phi'\colon M\times C\to N\given \im (\Red{\Phi}, \Red{\Phi'})\subset V}.
\end{equation}

The component fields \(\varphi'\), \(\psi'\) and \(F'\) of \(\Phi'=\exp_\Phi X\) depend on the component fields \((\xi, \zeta, \sigma)\) of \(X\) as follows:
\begin{equation}\label{eq:ComponentFieldsExp}
	\begin{aligned}
		\varphi'
		&= \exp \circ \tilde{X}\circ i
		= \tilde{\xi}\circ \exp \\
		\psi'
		&= \left.i^*\tilde{X}^*\differential{\exp}\circ i^*\differential{\tilde{X}}\right|_\cD
			= \tilde{\xi}^*\differential{\exp}\circ\left(\zeta, 0\right) \\
		F'
		&= -\frac12i^*\DLaplace \left(\exp\circ\tilde{X}\right)
		= \tilde{\xi}^*\differential{\exp}\left(\sigma, 0\right) + f'(\varphi, \xi, \zeta)
	\end{aligned}
\end{equation}
Here \(\tilde{\xi}\colon \Red{M}\times C\to \tangent{N}\) is the map obtained from \(\xi\in\VSec{\varphi^*\tangent{N}}\) and \((\zeta, 0)=\left.i^*\differential{\tilde{X}}\right|_\cD\) is the decomposition with respect to the splitting \(\tilde{\xi}^*\tangent{\tangent{N}}=\varphi^*\tangent{N}\oplus \tangent{M}\) obtained from the connection \(\nabla\).
The dependence of \(F'\) on \(\xi\) and \(\zeta\) is not made explicit here and contained in the term \(f'(\phi, \xi, \zeta)\).
For the following it is important that  \(\varphi'\) depends only on \(\phi\) and \(\xi\), \(\psi'\) depends on \(\phi\), \(\xi\) and linearly on \(\zeta\) and \(F'\) depends in an affine linear way on \(\sigma\), as well as on \(\xi\) and \(\zeta\).

We will now construct a split subsupermanifold \(\underline{\hat{\Fieldspace}}\) such that \(\mathcal{M}(A)\subset\hat{\Fieldspace}\subset{\Fieldspace}\) which will allow to show splitness of the moduli space \(\mathcal{M}(A)\) later.
\begin{defn}\label{defn:RestrictedFieldspace}
	Let \(\underline{\hat{\Fieldspace}}\subset\underline{\Fieldspace}\) be the subfunctor of all maps \(\Phi\colon M\times C\to N\) such that its component field \(F=-\frac12i^*\DLaplace\Phi\) vanishes.
	That is
	\begin{equation}
		\begin{split}
			\underline{\hat{\Fieldspace}}\colon \cat{SPoint}^{op}&\to \cat{Sets} \\
			C &\mapsto \underline{\hat{\Fieldspace}}(C) = \Set{\Phi\in\underline{\Fieldspace}(C)\given i^*\DLaplace\Phi = 0}
		\end{split}
	\end{equation}
	We denote the inclusion by \(\underline{j_{\hat{\Fieldspace}}}\colon \underline{\hat{\Fieldspace}}\to \underline{\Fieldspace}\).
	There is also a projection \(\underline{p_{\hat{\Fieldspace}}}\colon \underline{\Fieldspace}\to \underline{\hat{\Fieldspace}}\) which sends the map \(\Phi\colon M\times C\to N\) given by component fields \((\varphi, \psi, F)\) to the map \(\hat{\Phi}\colon M\times C\to N\) whose component fields are given by \((\varphi, \psi, \hat{F}=0)\).
\end{defn}

\begin{prop}
	There is a Fréchet supermanifold structure on \(\underline{\hat{\Fieldspace}}\) such that the inclusion~\(\underline{j_{\hat{\Fieldspace}}}\) and the projection~\(\underline{p_{\hat{\Fieldspace}}}\) are supersmooth maps.
\end{prop}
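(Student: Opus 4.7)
The plan is to build an atlas on $\hat{\Fieldspace}$ by restricting the exponential atlas of $\Fieldspace$ to the locus $F=0$. Around a base point $\Phi=(\phi,0,0)$, recall that the chart $\underline{U_\phi}$ of $\Fieldspace$ parametrizes nearby maps $\Phi' = \exp_\phi X$ by the component fields $(\xi,\zeta,\sigma)$ of $X$. The key observation is that, by Equation~\eqref{eq:ComponentFieldsExp}, the component field $F'$ of $\Phi'$ depends affinely linearly on $\sigma$, namely
\begin{equation}
	F' = L_\xi(\sigma) + f'(\phi,\xi,\zeta), \qquad L_\xi := \tilde{\xi}^*\differential{\exp}(\cdot,0),
\end{equation}
with $L_0 = \mathrm{id}$. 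The first step is to invert $L_\xi$ on a neighbourhood of $\xi = 0$---a pointwise affair, since $L_\xi$ is a smooth fiber endomorphism of a vector bundle---so as to obtain a smooth solution map $g_\phi(\xi,\zeta) := -L_\xi^{-1}(f'(\phi,\xi,\zeta))$ with the property that $F' = 0$ if and only if $\sigma = g_\phi(\xi,\zeta)$.

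I would then declare the chart $\underline{\hat{U}_\phi}$ on $\hat{\Fieldspace}$ around $\Phi$ to be the open sub-super-Fréchet-domain
\begin{equation}
	C \mapsto \Set{(\xi,\zeta) \given (\xi,\zeta,g_\phi(\xi,\zeta)) \in \underline{U_\phi}(C)}
\end{equation}
of the split Fréchet super vector space $\VSec{\phi^*\tangent{N}}\otimes(\cO_C)_0 \oplus \VSec{\dual{S}\otimes\phi^*\tangent{N}}\otimes(\cO_C)_1$, with chart map $(\xi,\zeta) \mapsto \exp_\phi(\xi,\zeta,g_\phi(\xi,\zeta))$. Compatibility of these charts on overlaps is then inherited from that of the charts $\underline{U_\phi}$ on $\Fieldspace$ together with uniqueness of the solution $g_\phi$.

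The inclusion $\underline{j_{\hat{\Fieldspace}}}$ reads $(\xi,\zeta)\mapsto(\xi,\zeta,g_\phi(\xi,\zeta))$ in these local coordinates and is smooth because $g_\phi$ is. For $\underline{p_{\hat{\Fieldspace}}}$ I would exploit that the component-field assignment $(\xi,\zeta,\sigma)\mapsto(\varphi',\psi',F')$ is a diffeomorphism in which $\varphi'$ and $\psi'$ depend only on $(\xi,\zeta)$; hence sending $(\varphi',\psi',F')$ to $(\varphi',\psi',0)$ amounts, in the source chart $\underline{U_\phi}$ and target chart $\underline{\hat{U}_\phi}$, to the direct-sum projection $(\xi,\zeta,\sigma)\mapsto(\xi,\zeta)$, which is trivially smooth. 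The main obstacle I anticipate is verifying that $\xi \mapsto L_\xi^{-1}$ is genuinely smooth as an operator-valued function in the Fréchet topology on sections; but since $L_\xi$ acts pointwise as a finite-dimensional fiber endomorphism, this should reduce to the corresponding standard statement for matrix inversion and propagate to smoothness of $g_\phi$.
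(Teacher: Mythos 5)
Your proposal is correct and takes essentially the same approach as the paper: both restrict the exponential atlas of \(\Fieldspace\) to the locus \(F=0\), use \((\xi,\zeta)\) ranging in the split Fréchet super vector space \(\VSec{\phi^*\tangent{N}}\otimes{\left(\cO_C\right)}_0 \oplus \VSec{\dual{S}\otimes\phi^*\tangent{N}}\otimes{\left(\cO_C\right)}_1\) as chart coordinates, write the inclusion \(\underline{j_{\hat{\Fieldspace}}}\) in charts as \((\xi,\zeta)\mapsto(\xi,\zeta,\sigma)\) with \(\sigma\) the unique solution of \(F'=0\) (guaranteed by the affine linear dependence of \(F'\) on \(\sigma\)), and write the projection \(\underline{p_{\hat{\Fieldspace}}}\) as \((\xi,\zeta,\sigma)\mapsto(\xi,\zeta)\). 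The only cosmetic difference is that the paper defines the chart map directly by prescribing the component fields \((\hat{\varphi},\hat{\psi},\hat{F}=0)\) rather than as \(\exp_\phi(\xi,\zeta,g_\phi(\xi,\zeta))\), so the solution map \(g_\phi\) (and hence your concern about smoothness of \(\xi\mapsto L_\xi^{-1}\), which your fiberwise-inversion argument handles and the paper leaves implicit) enters only in the description of the inclusion.
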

\begin{proof}
	Consider the Fréchet super vector space
	\begin{equation}
		\underline{\hat{W}_\phi}(C)
		= \VSec{\phi^*\tangent{N}}\otimes{\left(\cO_C\right)}_0 \oplus \VSec{\dual{S}\otimes\phi^*\tangent{N}}\otimes{\left(\cO_C\right)}_1
	\end{equation}
	and its open subsuperdomain
	\begin{equation}
		\underline{\hat{U}_\phi}(C) = \Set{\hat{X}\in \underline{W_\phi}(C)\given \exp_\phi \Red{\hat{X}}\text{ is defined}}.
	\end{equation}
	We construct a chart by sending \(\hat{X}=(\xi, \zeta)\in\underline{\hat{U}_\phi}(C)\) to the map \(\hat{\Phi}\colon M\times C\to N\) whose component fields are given by
	\begin{equation}\label{eq:ComponentFieldsExpHat}
		\begin{aligned}
			\hat{\varphi}
			&= \tilde{\xi}\circ \exp, &
			\hat{\psi}
			&= \tilde{\xi}^*\differential{\exp}\circ\left(\zeta, 0\right), &
			\hat{F}
			&= 0,
		\end{aligned}
	\end{equation}
	compare Equations~\eqref{eq:ComponentFieldsExp}.
	This gives a set of charts \(\underline{\hat{U}_\phi}\) for \(\underline{\hat{\Fieldspace}}\) in the same way as \(\underline{U}_\phi\) gives a set of charts for \(\underline{\Fieldspace}\).

	The inclusion \(\underline{j_{\hat{\Fieldspace}}}\colon \underline{\hat{\Fieldspace}}\to \underline{\Fieldspace}\) is given in the charts \(\underline{\hat{U}_\phi}\to \underline{U_\phi}\) by \((\xi, \zeta)\to (\xi, \zeta, \sigma)\) where \(\sigma\) is the unique element of \(\VSec{\phi^*\tangent{N}}\otimes{\left(\cO_C\right)}_0\) such that \(F'=0\) in Equations~\eqref{eq:ComponentFieldsExp}.
	Such a \(\sigma\) does always exist because \(F'\) depends affine linearly on \(\sigma\).

	The projection \(\underline{p_{\hat{\Fieldspace}}}\colon \underline{\Fieldspace}\to\underline{\hat{\Fieldspace}}\) is given in the charts \(\underline{\hat{U}_\phi}\) and \(\underline{U_\phi}\) by \((\xi, \zeta, \sigma)\to (\xi, \zeta)\).
\end{proof}
Note that \(\hat{H}=\underline{\hat{\Fieldspace}}(\R^{0|0})\) is the smooth manifold of maps \(\phi\colon\Red{M}\to N\), denoted \(\mathcal{B}\) in~\cite{McDS-JHCST}.
The normal bundle \(N_{\hat{\Fieldspace}/H}\) of the embedding \(i_{\hat{H}}\colon \hat{H}\to \hat{\Fieldspace}\) has the fiber \(\VSec{\dual{S}\otimes\phi^*\tangent{N}}\) above \(\phi\).
\begin{prop}
	The charts \(\underline{\hat{W}_\phi}\) constitute a split atlas of the supermanifold \(\hat{\Fieldspace}\).
\end{prop}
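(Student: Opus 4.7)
The plan is to check that for any two overlapping charts $\underline{\hat{U}_{\phi_1}}$ and $\underline{\hat{U}_{\phi_2}}$ of $\underline{\hat{\Fieldspace}}$, the transition of chart coordinates has the split form of~\eqref{eq:GluingSplitSuperManifold}: the new even coordinate depends only on the old even coordinate, and the new odd coordinate is linear in the old odd coordinate with coefficients depending smoothly on the even coordinate. Once this is verified, the chart data from the previous proposition give the split atlas in the sense described above Example~\ref{ex:P11isSplit}, with base manifold $\hat{H}$ and fiber bundle the one whose fibers over $\phi$ are $\VSec{\dual{S}\otimes\phi^*\tangent{N}}$.

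First I would compare the two chart representations of a fixed map $\hat{\Phi}\in\underline{\hat{\Fieldspace}}(C)$ in the overlap, with coordinates $(\xi_j, \zeta_j)$ for $j=1,2$. By~\eqref{eq:ComponentFieldsExpHat}, the even component field of $\hat{\Phi}$ is $\hat{\varphi} = \exp\circ\tilde{\xi}_j\circ i$, which depends on $\xi_j$ alone. Hence the equation $\exp\circ\tilde{\xi}_1\circ i = \exp\circ\tilde{\xi}_2\circ i$ uniquely determines $\xi_2 = f_{21}(\xi_1)$ via the pointwise application of $\exp_{\phi_2}^{-1}\circ\exp_{\phi_1}$, which is precisely the transition function of the smooth Fréchet manifold $\hat{H} = \underline{\hat{\Fieldspace}}(\R^{0|0})$ of underlying maps $\varphi\colon\Red{M}\to N$. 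This establishes the desired $\zeta_1$-independence of the even part.

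Next I would analyze the spinorial component $\hat{\psi} = \tilde{\xi}_j^*\differential{\exp}\circ(\zeta_j,0)$, which is linear in $\zeta_j$ with coefficients depending on $\xi_j$ alone. Once $\xi_2 = f_{21}(\xi_1)$ has been fixed, inverting the linear operator $\tilde{\xi}_2^*\differential{\exp}$ yields $\zeta_2 = F_{21}(\xi_1)\,\zeta_1$, where $F_{21}(\xi_1) = (\tilde{\xi}_2^*\differential{\exp})^{-1}\circ(\tilde{\xi}_1^*\differential{\exp})$ is a smooth family of invertible linear maps $\VSec{\dual{S}\otimes\phi_1^*\tangent{N}}\to\VSec{\dual{S}\otimes\phi_2^*\tangent{N}}$ parametrized by $\xi_1$. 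The Molotkov--Sachse extension of the pair $(f_{21}, F_{21})$ to a general superpoint base $C$ then produces exactly the gluing $f_{21}\otimes(\cO_C)_0\oplus F_{21}\otimes(\cO_C)_1$ prescribed in the discussion around~\eqref{eq:GluingSplitSuperManifold}.

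The step I would expect to require the most thought is explaining \emph{why the passage to $\hat{\Fieldspace}$ is necessary} in the first place. In the full space $\Fieldspace$, the third even chart coordinate $\sigma$ enters the auxiliary field $F'$ via the affine relation $F' = \tilde{\xi}^*\differential{\exp}(\sigma,0) + f'(\varphi,\xi,\zeta)$ of~\eqref{eq:ComponentFieldsExp}, and the correction $f'(\varphi,\xi,\zeta)$ necessarily contains terms quadratic in the odd variable $\zeta$ because $F'$ is even. A change of chart on $\Fieldspace$ would therefore contribute a $\zeta$-quadratic term to the even transition $\sigma_1\mapsto\sigma_2$, destroying splitness. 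Imposing $\hat{F}=0$ to define $\hat{\Fieldspace}$ removes $\sigma$ from the chart and thereby eliminates exactly this obstruction, leaving only the manifestly split transitions computed above.
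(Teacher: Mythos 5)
Your proof is correct and follows essentially the same route as the paper: the paper's own argument is precisely that in Equations~\eqref{eq:ComponentFieldsExpHat} the even field \(\hat{\varphi}\) depends only on \(\xi\) while \(\hat{\psi}\) is linear in \(\zeta\) with \(\xi\)-dependent coefficients, so every coordinate change \((\xi,\zeta)\mapsto(\xi',\zeta')\) has the split form of~\eqref{eq:GluingSplitSuperManifold}. Your additional closing paragraph, explaining that the quadratic-in-\(\zeta\) correction \(f'(\varphi,\xi,\zeta)\) in the \(\sigma\)-transition is what forces the restriction from \(\Fieldspace\) to \(\hat{\Fieldspace}\), is a sound observation consistent with the paper's motivation for Definition~\ref{defn:RestrictedFieldspace}, though not needed for the statement itself.
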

\begin{proof}
	This follows because \(\varphi\) depends only on \(\xi\) and \(\hat{\psi}\) depends linearly on \(\zeta\) in Equations~\eqref{eq:ComponentFieldsExpHat}.
	Consequently, any coordinate change \(\underline{\hat{U}_\phi}\to \underline{\hat{U}_{\phi'}}\), \((\xi, \zeta)\mapsto (\xi', \zeta')\) is such that \(\xi'\) depends only on \(\xi\) and \(\zeta'\) depends linearly on \(\zeta\).
\end{proof}

\subsection{The moduli space}\label{SSec:TheModuliSpace}
In this section we recall some details on the vector bundle \(\Targetbundle\to \Fieldspace\) and its section~\(\mathcal{S}\) given by the operator \(\DJBar\).
The moduli space \(\mathcal{M}(A)\) is the zero locus of \(\mathcal{S}\).
By studying \(\differential{\mathcal{S}}\) we calculate the fibers of the deformation and obstruction bundle under the assumption of a smoothly obstructed moduli space, see Section~\ref{defn:SmoothlyObstructed}.
If in addition, the target \(N\) is Kähler and the domain \(M\) is holomorphically relatively split, we construct an explicit split atlas of the moduli space \(\mathcal{M}(A)\).

The vector bundle \(\Targetbundle\) is trivialized using parallel transport along geodesics:
\begin{equation}
	\id_{\dual{\cD}}\otimes P^{\nablabar}_{\exp_\Phi tX} \colon \dual{\cD}\otimes \Phi^*\tangent{N} \to\dual{\cD}\otimes {\left(\exp_\Phi X\right)}^*\tangent{N}.
\end{equation}
This trivialization is locally of the form \(\underline{U_\phi}\times \underline{F_\phi}\to \underline{U_\phi}\), where the fiber is equipped with the structure of a Fréchet super vector space via component field decompositions:
\begin{equation}\label{eq:ComponentsF}
	\begin{split}
		\underline{F_\phi}(C)
		={}& {\left({\VSec{\dual{\cD}\otimes\Phi^*\tangent{N}}}^{0,1}\otimes\cO_C\right)}_0 \\
		\cong{}& \VSec{\dual{S}\otimes\phi^*\tangent{N}}^{0,1}\otimes{\left(\cO_C\right)}_1 \oplus \left(\VSec{\phi^*\tangent{N}}\oplus\VSec{\cotangent{M}\otimes\phi^*\tangent{N}}^{0,1}\right)\otimes{\left(\cO_C\right)}_0 \\
		& \oplus\VSec{\dual{S}\otimes\phi^*\tangent{N}}^{0,1}\otimes{\left(\cO_C\right)}_1.
	\end{split}
\end{equation}

The section \(\underline{\mathcal{S}}\colon \underline{\Fieldspace}\to \underline{\Targetbundle}\) is given in the above trivializations by a map
\begin{equation}
	\underline{\mathcal{F}_\phi}\colon \underline{U_\phi} \to \underline{F_\phi}
\end{equation}
which is given on \(C\)-points by
\begin{equation}
	\underline{\mathcal{F}_\phi}(C) X
	= {\left(\id_{\dual{\cD}}\otimes P_{\exp_{\Phi} tX}^{\nablabar}\right)}^{-1}\DJBar \exp_{\Phi} X.
\end{equation}
The essential part of the map \(\differential{\mathcal{S}}\) is given by \(\differential{\mathcal{F}_\phi}\).
The differential of \(\underline{\mathcal{F}_\phi}\) at \(X\) in direction~\(Y\) can be determined as
\begin{equation}
	\left(\differential{\underline{\mathcal{F}}_\phi(C)} X\right)Y
	= \left.\frac{\d}{\d{s}}\right|_{s=0} {\left(\id_{\dual{\cD}}\otimes P_{\exp_{\Phi} t\left(X+sY\right)}^{\nablabar}\right)}^{-1}\DJBar \exp_{\Phi} \left(X+sY\right).
\end{equation}
We have calculated \(\differential{\mathcal{F}_\phi}\) at \(X=0\) in~\cite[Section~4.1]{KSY-SJC}.
For the result we need to express \(Y\in{\left(\VSec{\Phi^*\tangent{N}}\otimes\cO_C\right)}_0\) in the component fields
\begin{align}
	\xi &\in \VSec{\phi^*\tangent{N}}\otimes{\left(\cO_C\right)}_0, &
	\zeta &\in \VSec{\dual{S}\otimes\phi^*\tangent{N}}\otimes{\left(\cO_C\right)}_1, &
	\sigma &\in \VSec{\phi^*\tangent{N}}\otimes{\left(\cO_C\right)}_0.
\end{align}
Roughly, \(\xi\) is the variation of \(\varphi\), \(\zeta\) is the variation of \(\psi\) and \(\sigma\) is the variation of \(F\).
Then,
\begin{equation}
	\left(\differential{\underline{\mathcal{F}_\phi}}0\right)Y=
	\left(\zeta^{0,1},
	\frac14\sigma,
	-D_{\varphi}\xi,
	-\hat\Dirac_{\varphi}\zeta\right)
\end{equation}
using the component field decomposition~\eqref{eq:ComponentsF}, where
\begin{align}
	D_\varphi \xi &= \frac12\left(1+\ACI\otimes\targetACI\right)\left(\nabla\xi - \frac12\id_{\cotangent{\Smooth{M}}}\otimes\left(\targetACI\left(\nabla_\xi \targetACI\right)\right)\differential\varphi\right) \\
	\hat\Dirac_{\varphi} \zeta
	&= \Dirac^{1,0}\zeta^{1,0} - \frac12\left(\ACI \gamma^k\otimes\left(\nabla_k\targetACI\right)\zeta^{0,1} - \Tr_g\left<\left(\gamma\otimes\id_{\varphi^*\tangent{N}}\right)\zeta^{0,1}, \targetACI\varphi^*\nabla\targetACI\right>\differential{\varphi}\right).
\end{align}
The operator \(D_\varphi\) is the linearization of \(\DelJBar\) at \(\varphi\).
Both, \(\ker D_\phi\) and \(\Coker D_\phi\) have been studied in the theory of classical \(\targetACI\)-holomorphic curves, see~\cite[Chapter~7.2]{McDS-JHCST}.
If \(N\) is Kähler, the operator \(\hat{\Dirac}_\varphi\zeta\) coincides with
\begin{equation}
	\Dirac^{1,0}=\frac12\left(1+\ACI\otimes\targetACI\right)\Dirac \colon \VSec{\dual{S}\otimes_\C\varphi^*\tangent{N}}\to{\VSec{\dual{S}\otimes\varphi^*\tangent{N}}}^{0,1},
\end{equation}
see~\cite[Chapter~4.2]{KSY-SJC}.
Again if \(N\) is Kähler, \(\Coker \hat{\Dirac}_\varphi\) coincides with \(\ker\Dirac^{0,1}\), where
\begin{equation}
	\Dirac^{0,1}=\frac12\left(1+\ACI\otimes\targetACI\right)\Dirac \colon {\VSec{\dual{S}\otimes\varphi^*\tangent{N}}}^{0,1}\to\VSec{\dual{S}\otimes_\C\varphi^*\tangent{N}}.
\end{equation}
Both \(D_\varphi\) and \(\hat\Dirac_\varphi\) are \(\cO_C\)-linear and hence the fibers of deformation and obstruction bundle above \(\Phi=(\varphi=\phi\times\id_C, \psi=0, F=0)\) are given by
\begin{align}
	\underline{\DefBundle}_\Phi(C) &= {\left(\left(\ker D_\phi \oplus \ker \hat\Dirac_\phi\right)\otimes \cO_C\right)}_0 \\
	\underline{\ObsBundle}_\Phi(C) &= {\left(\left(\Coker D_\phi \oplus \Coker \hat\Dirac_\phi\right)\otimes \cO_C\right)}_0
\end{align}

Let now \(M(A)=\underline{\mathcal{M}(A)}(\R)\) be the reduced space of the moduli space of super \(\targetACI\)-holomorphic curves and \(i_{M(A)}\colon M(A)\to \mathcal{M}(A)\) the embedding.
The points of \(M(A)\) are classical \(\targetACI\)-holomorphic curves \(\phi\colon \Red{M}\to N\).
Under the assumption that the \(\mathcal{M}(A)\) is smoothly obstructed, we have \(\tangent[\phi]{M(A)}=\ker D_\phi\) and
\begin{align}
	i_{M(A)}^*\DefBundle_\phi &= \ker D_\phi\oplus \ker \hat\Dirac_\phi &
	i_{M(A)}^*\ObsBundle_\phi &= \Coker D_\phi\oplus \Coker \hat\Dirac_\phi
\end{align}
are super vector bundles with odd part \(\ker D_\phi\) and  \(\Coker \hat\Dirac_\phi\) respectively.
Consequently, the normal bundle of the embedding \(i_{M(A)}\colon M(A)\to\mathcal{M}(A)\) is given by \(N_A = N_{\mathcal{M}(A)/M(A)}\).
The fiber of \(N_A\) at the point \(\phi\) is given by holomorphic sections of \(\dual{S}\otimes_\C \phi^*\tangent{N}\), that is \(\ker \hat{\Dirac}_\phi=H^0(\dual{S}\otimes\phi^*\tangent{N})\).

\ModuliSpaceIsSplit{}
\begin{proof}
	If the gravitino vanishes and the target is Kähler, the component field equations of \(\left.\mathcal{S}\right|_{\hat{\Fieldspace}}=0\) reduce to
	\begin{align}
		0 &= \DelJBar\varphi, &
		0 &= \left(1+\ACI\otimes\targetACI\right)\psi, &
		0 &= \Dirac\psi.
	\end{align}
	The first equation does not depend on \(\psi\) at all and defines, by the smooth obstruction assumption a smooth submanifold \(j\colon M(A)\hookrightarrow \underline{\hat{\Fieldspace}}(\R^{0|0})\).
	While equations for the twisted spinor depend in a non-linear fashion on \(\varphi\), they are linear in \(\psi\).
	Those linear equations define the vector bundle \(N_{\mathcal{M}(A)/M(A)}\subset j^*N_{\hat{\Fieldspace}/\hat{H}}\).
	Hence, \(\mathcal{M}(A)\) is the split supermanifold obtained from \(M(A)\) and \(N_{\mathcal{M}(A)/M(A)}\).
\end{proof}

\subsection{Complex Structures}\label{SSec:ComplexStructures}
In this section we explain how the almost complex structure \(\targetACI\) of the target induces almost complex structures on \(\tangent{\Fieldspace}\), \(\DefBundle\) and \(\ObsBundle\).

An infinitesimal deformation of a map \(\Phi\colon M\times C\to N\) is given by an even section of \(\Phi^*\tangent{N}\).
Consequently, the fiber of the tangent bundle \(\underline{\tangent{\Fieldspace}}\) above the point \(\Phi\in\underline{\Fieldspace}(C)\) is given by
\begin{equation}
	\underline{\tangent[\Phi]{\Fieldspace}}(C) = \VSec{\Phi^*\tangent{N}}_0.
\end{equation}
In this description it is clear that \(\tangent{\Fieldspace}\) carries an almost complex structure \(\targetACI_\Fieldspace\) given by \(\Phi^*\targetACI\) induced from the almost complex structure \(\targetACI\) on \(N\).

The charts \(\exp_\Phi\colon U_\phi\to \Fieldspace\) give local trivializations of the tangent bundle \(\tangent{\Fieldspace}\) as
\begin{equation}\label{eq:TrivializationTHExponentialMap}
	\underline{\tangent{\Fieldspace}}|_{U_\phi} \simeq \underline{U_\phi}\times \underline{\VSec{\Phi^*\tangent{N}}}
\end{equation}
via the differential of the exponential map.
More explicitly, let \(X\in\underline{U_\phi}(C)\) and \(Y\in \underline{W_\phi}(C)\).
Then the constant section \(X\mapsto (X, Y)\) of \(\tangent{U_\phi}\to U_\phi\) is mapped to
\begin{equation}
	{\left(\differential{\exp_\Phi}\right)}_X(Y)
	=\left.\frac{\d}{\d\tau}\right|_{\tau=0} \exp_\Phi(X+\tau Y)
	\in \underline{\tangent[\exp_\Phi X]{\Fieldspace}}(C)
	= {\VSec{{\left(\exp_\Phi X\right)}^*\tangent{N}}}_0.
\end{equation}
The vector field \({\left(\differential{\exp_\Phi}\right)}_X(Y)\) has a description in terms of Jacobi fields along the geodesic \(\exp_\Phi tX\):
A Jacobi field \(T\) along a geodesic \(c\) is uniquely determined by \(T(t=t_0)\) and \(\dot T(t=t_0)\) at a given time and the Jacobi equation
\begin{equation}\label{eq:JacobiFields}
	\nabla_{\partial_t}\nabla_{\partial_{\partial_t}} T + R(T, \dot c)\dot c =0,
\end{equation}
see, for example,~\cite[Chapter~4.2]{JJ-RGGA}.
The same works in our situation with additional parametrization along the map \(\Phi\).
Let \(T\) be the Jacobi field along \(\exp_\Phi tX\) such that \(T(t=0)=0\) and  \(\dot{T}(t=0)=Y\).
Then \({\left(\differential{\exp_\Phi}\right)}_X(Y)=T(t=1)\).

\begin{rem}\label{rem:ChartsNotComplex}
	Unfortunately, the Jacobi equation is not compatible with the almost complex structure on \(N\).
	This can be seen in the case where \(N\) is Kähler as follows:
	\begin{equation}
		\nabla_{\partial_t}\nabla_{\partial_{\partial_t}} \targetACI T + R(\targetACI T, \dot c)\dot c,
		= \targetACI\left(\nabla_{\partial_t}\nabla_{\partial_{\partial_t}}  T + R(T, \dot c) \dot c\right) + R(\targetACI T, \dot c)\dot c - \targetACI R(T, \dot c) \dot c
	\end{equation}
	But in general \(R(\targetACI T, \dot c)\dot c\neq \targetACI R(T, \dot c)\dot c\).
	Consequently, the almost complex structure \(\targetACI_\Fieldspace\) does not coincide with \(\targetACI\) in the trivialization of \(\tangent{\Fieldspace}\) via the differential of the exponential map as in~\eqref{eq:TrivializationTHExponentialMap}.

	However, the almost complex structure \(\targetACI_\Fieldspace\) coincides with \(\targetACI\), that is \(\targetACI_{\Fieldspace}(X, Y)= (X, \targetACI Y)\) in the following trivialization not induced from charts of \(\Fieldspace\):
	Let \(\nabla\) be the Levi-Civita covariant derivative on \(N\).
	Then,
	\begin{equation}
		\nablabar X = \nabla X - \frac12 \targetACI\left(\nabla \targetACI\right)X
	\end{equation}
	is an almost complex connection, that is, \(\nablabar\targetACI=0\).
	Parallel transport along \(\exp_\Phi tX\) with respect to \(\nablabar\) is a complex linear automorphism
	\begin{equation}
		P^{\nablabar}_{\exp_\Phi tX} \colon {\VSec{\Phi^*\tangent{N}}}_0 \to {\VSec{{\left(\exp_\Phi X\right)}^*\tangent{N}}}_0
	\end{equation}
	and allows to trivialize \(\underline{\tangent{\Fieldspace}}\) in a way such that \(\targetACI_\Fieldspace(X, Y) = (X, \targetACI Y)\).
\end{rem}

\begin{lemma}
	Assume that the target \(N\) is Kähler.
	Then \(\targetACI_\Fieldspace\) restricts to an almost complex structure \(\targetACI_{\hat{\Fieldspace}}\) on \(\hat{\Fieldspace}\).
\end{lemma}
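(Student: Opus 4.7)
The plan is to identify the tangent subspace $\tangent[\Phi]{\hat{\Fieldspace}} \subset \tangent[\Phi]{\Fieldspace}$ as the kernel of the linearization $\delta F$ of the component field $F$, and then to exploit the Kähler hypothesis on $N$ to show that this kernel is preserved by $\targetACI_\Fieldspace$. Since $\targetACI_\Fieldspace^2 = -\id$ holds pointwise on $N$, a well-defined restriction to $\tangent{\hat{\Fieldspace}}$ automatically satisfies the almost complex structure property.

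By Definition~\ref{defn:RestrictedFieldspace}, $\hat{\Fieldspace}$ is the vanishing locus of $\Phi \mapsto F(\Phi) = -\tfrac{1}{2}\, i^*\DLaplace\Phi$, so for $\Phi \in \hat{\Fieldspace}$ the tangent space $\tangent[\Phi]{\hat{\Fieldspace}}$ is the kernel of the operator $\delta F$ that sends $Y \in \VSec{\Phi^*\tangent{N}}_0$ to the derivative $\partial_t F(\exp_\Phi(tY))|_{t=0}$ (parallel-transported back to the fiber over $\Phi$). Unpacking this, $\delta F(Y)$ equals the component field $\sigma(Y) = -\tfrac{1}{2}\, i^*\DLaplace Y$ of $Y$, up to curvature corrections coming from the fact that $\DLaplace$ is applied to maps rather than to sections.

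The Kähler hypothesis controls both contributions: since $\nabla\targetACI = 0$ for the Levi-Civita connection on $N$, every differential operator built from $\nabla$ and the distribution $\cD$ commutes with the pointwise action of $\targetACI$, and in particular $\sigma(\targetACI Y) = \targetACI\,\sigma(Y)$. In addition, the Kähler identity $[R(X,Y), \targetACI] = 0$ on the Riemann tensor of $N$ ensures that the curvature corrections in the linearization also commute with $\targetACI$. Together these give $\delta F(\targetACI Y) = \targetACI\,\delta F(Y)$, so that $\ker \delta F$ is preserved by $\targetACI_\Fieldspace$. The resulting restriction $\targetACI_{\hat{\Fieldspace}}$ inherits $\targetACI_{\hat{\Fieldspace}}^2 = -\id$ pointwise from $N$ and is smooth because $\targetACI_\Fieldspace$ and the inclusion $j_{\hat{\Fieldspace}}$ are. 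The main bookkeeping obstacle is verifying that every term produced by linearizing the nonlinear operator $\Phi \mapsto \DLaplace\Phi$ really does commute with $\targetACI$ on a Kähler target; this is a standard but slightly lengthy local calculation in normal coordinates on $N$.
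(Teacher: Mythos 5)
Your overall strategy---identify \(\tangent[\Phi]{\hat{\Fieldspace}}\) as the kernel of the linearization of the defining condition \(F(\Phi)=-\tfrac12\, i^*\DLaplace\Phi=0\) and show that this kernel is \(\targetACI\)-invariant---is the same as the paper's, and your first step (the term \(i^*\DLaplace Y\) commutes with \(\targetACI\) because \(\nabla\targetACI=0\) on a Kähler target) is exactly the paper's argument. The gap is in your treatment of the curvature corrections. The identity you invoke, \([R(X,Y),\targetACI]=0\), does hold on a Kähler manifold, but it moves \(\targetACI\) through the \emph{last} argument of the curvature tensor: \(R(X,Y)\targetACI Z=\targetACI R(X,Y)Z\). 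The correction terms produced by linearizing a second-order operator along a family of maps are of Jacobi type, with the variation field \(Y\) sitting in the \emph{first} argument, schematically \(R(Y,D\Phi)D\Phi\); to conclude equivariance you would need \(R(\targetACI Y,U)V=\targetACI R(Y,U)V\), which does \emph{not} follow from \([R(X,Y),\targetACI]=0\) and in fact fails in general even on Kähler manifolds. The paper makes precisely this point in Remark~\ref{rem:ChartsNotComplex}: for Kähler \(N\) one has, in general,
\begin{equation}
	R(\targetACI T,\dot c)\dot c \neq \targetACI R(T,\dot c)\dot c,
\end{equation}
which is why the exponential charts of \(\Fieldspace\) are not complex charts. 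So the one step you defer as ``standard but slightly lengthy'' is exactly where the difficulty sits, and the identity you cite does not close it.

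The paper's proof avoids this issue entirely: using Definition~\ref{defn:RestrictedFieldspace} and the structure of the charts of \(\hat{\Fieldspace}\), it describes the tangent space as
\begin{equation}
	\underline{\tangent[\Phi]{\hat{\Fieldspace}}}(C) = \Set{Y\in \underline{\tangent[\Phi]{\Fieldspace}}(C)\given i^*\DLaplace Y=0},
\end{equation}
that is, the condition is the vanishing of the \(\sigma\)-component field of the section \(Y\) itself, with no curvature corrections appearing, after which only the \(\targetACI\)-linearity of \(\nabla\) (hence of \(\DLaplace\) acting on sections) is needed. To repair your argument you must either justify this clean description of \(\tangent[\Phi]{\hat{\Fieldspace}}\) (which is what the paper asserts), or actually carry out the deferred computation and show that, after the contraction over the odd frame of \(\cD\) and using the first Bianchi identity together with the oddness of the component fields \(\psi\), the correction terms can be rewritten with \(\targetACI\) acting in the last slot of \(R\), where \([R(X,Y),\targetACI]=0\) genuinely applies. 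As written, the claim that the Kähler identity disposes of the corrections is unjustified, and this is the crux of the proof rather than bookkeeping.
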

\begin{proof}
	By Definition~\ref{defn:RestrictedFieldspace}, \(\underline{\hat{\Fieldspace}}(C)\) is given by those maps \(\Phi\colon M\times C\to N\) such that \(i^*\DLaplace \Phi=0\).
	Consequently, the fibers of its tangent space are given by
	\begin{equation}
		\underline{\tangent[\Phi]{\hat{\Fieldspace}}}(C) = \Set{Y\in \underline{\tangent[\Phi]{\Fieldspace}}(C)\given i^*\DLaplace Y=0}.
	\end{equation}
	If \(N\) is Kähler, the covariant derivative \(\nabla\) and hence the operator \(\DLaplace\) are \(\targetACI\)-linear and hence if \(i^*\DLaplace Y=0\) also \(i^*\DLaplace \targetACI Y=0\).
\end{proof}
\begin{prop}\label{prop:AConDefObs}
	Assume that the moduli space \(\mathcal{M}(A)\) is smoothly obstructed and the target \(N\) is Kähler.
	Then \(\DefBundle=\tangent{\mathcal{M}(A)}\) and \(\ObsBundle\) are complex vector bundles.
\end{prop}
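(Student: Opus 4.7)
The plan is to transfer the almost complex structure $\targetACI_\Fieldspace$ to $\DefBundle$ and $\ObsBundle$ by showing that $\differential{\mathcal{S}}$ intertwines compatible almost complex structures at every point of the moduli space.

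First I would equip $\Targetbundle$ with an almost complex structure $\targetACI_\Targetbundle$ compatible with $\targetACI_\Fieldspace$ under the projection $\Targetbundle\to\Fieldspace$. The fibers of $\Targetbundle$ are built from sections of $\dual{\cD}\otimes\Phi^*\tangent{N}$, projected onto the $(0,1)$-part determined by $\ACI\otimes\targetACI$. The endomorphism $\Phi^*\targetACI$ acting on the second factor commutes with this projector and squares to $-1$, so it defines the desired almost complex structure. In the trivialization of $\Targetbundle$ by $\nablabar$-parallel transport used at the beginning of Section~\ref{SSec:TheModuliSpace}, this takes the simple form $Y\mapsto\targetACI Y$, in direct analogy to Remark~\ref{rem:ChartsNotComplex}.

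Next I would verify that $\differential{\mathcal{S}}$ is complex linear along $\mathcal{M}(A)$. In the component-field description, the differential $(\differential{\mathcal{F}_\phi} 0) Y$ decomposes into the four terms $\zeta^{0,1}$, $\tfrac{1}{4}\sigma$, $-D_\varphi\xi$ and $-\hat{\Dirac}_\varphi\zeta$. Under the Kähler hypothesis $\nabla\targetACI=0$, the operator $D_\varphi$ reduces to $\tfrac{1}{2}(1+\ACI\otimes\targetACI)\nabla$ and $\hat{\Dirac}_\varphi$ coincides with $\Dirac^{1,0}$; both commute with $\Phi^*\targetACI$ acting on $\varphi^*\tangent{N}$, because the Levi-Civita connection and hence the Dirac operator are $\targetACI$-linear in the Kähler case. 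The remaining two components are manifestly $\targetACI$-linear. Hence $\differential{\mathcal{S}}$ intertwines $\targetACI_\Fieldspace$ with $\targetACI_\Targetbundle$ at every point of $\mathcal{M}(A)$.

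Under the smoothly obstructed assumption, $\DefBundle = j^*\ker\differential{\mathcal{S}}$ and $\ObsBundle = j^*\Coker\differential{\mathcal{S}}$ are vector bundles of locally constant rank and $\tangent{\mathcal{M}(A)}=\DefBundle$. The almost complex structures on $\tangent{\Fieldspace}$ and $\Targetbundle$ therefore restrict and descend to almost complex structures on $\DefBundle$ and $\ObsBundle$, making them complex vector bundles. The only genuinely non-formal step is the complex linearity of $\differential{\mathcal{S}}$, and that reduces directly to $\nabla\targetACI=0$; the rest is bookkeeping between the two trivializations of $\tangent{\Fieldspace}$ and $\Targetbundle$.
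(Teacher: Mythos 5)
Your overall strategy --- equip \(\Targetbundle\) with the almost complex structure \(\id\otimes\targetACI\) on the second tensor factor and deduce complex structures on \(\DefBundle\) and \(\ObsBundle\) from complex linearity of \(\differential{\mathcal{S}}\) in the Kähler case --- is the same as the paper's, and your computation is correct as far as it goes. The gap is \emph{where} you verify the complex linearity. The component-field formula you invoke,
\begin{equation}
	\left(\differential{\underline{\mathcal{F}_\phi}}0\right)Y
	=\left(\zeta^{0,1},\ \tfrac14\sigma,\ -D_{\varphi}\xi,\ -\hat\Dirac_{\varphi}\zeta\right),
\end{equation}
is the differential of \(\mathcal{F}_\phi\) at \(X=0\), i.e.\ at the chart center \(\Phi=(\varphi=\phi\times\id_C,\ \psi=0,\ F=0)\). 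These are exactly the \(C\)-points of \(\mathcal{M}(A)\) that are base extensions of classical \(\targetACI\)-holomorphic curves; they are never the general point. The supermanifold \(\mathcal{M}(A)\) has many more \(C\)-points --- super \(\targetACI\)-holomorphic curves with \(\psi\neq0\) --- and \(\DefBundle\), \(\ObsBundle\) are bundles over all of \(\mathcal{M}(A)\). At \(X\neq0\) the differential \(\left(\differential{\underline{\mathcal{F}_\phi}}X\right)Y\) contains additional terms coming from the derivative of the parallel transport \(P^{\nablabar}_{\exp_\Phi tX}\) and from the nonlinearity of \(\DJBar\exp_\Phi\), about which your argument says nothing. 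Since in supergeometry an endomorphism field on a bundle over \(\mathcal{M}(A)\) is not determined by its restriction to the reduced locus, what you have actually proved is that \(i_{M(A)}^*\DefBundle\) and \(i_{M(A)}^*\ObsBundle\) carry almost complex structures, which is strictly weaker than the statement of the proposition. You also cannot patch this by appealing to the splitting of \(\mathcal{M}(A)\) from Theorem~\ref{thm:ModuliSpaceIsSplit}, because that theorem assumes in addition that the domain is holomorphically relatively split, a hypothesis Proposition~\ref{prop:AConDefObs} does not make.

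The paper closes exactly this point by working at an arbitrary point: it introduces the covariant derivative \(\nabla^{\Targetbundle}\) induced by \(\nabla^\cD\) and \(\nablabar\), obtains the splitting \(\tangent{\Targetbundle}=\pi^*\tangent{\Fieldspace}\oplus\pi^*\Targetbundle\), and then uses the formula, valid for \emph{any} \(\Phi\colon M\times C\to N\),
\begin{equation}
	\left(\underline{\pi_2\circ\d_{\Phi}\mathcal{S}}(C)\right)Y
	= \frac12\left.\left(1+\ACI\otimes \targetACI\right)\left(\nabla Y - \frac12\left(\id_{\cotangent{M}}\otimes \targetACI \nabla_Y \targetACI\right) \differential{\Phi}\right)\right|_{\cD},
\end{equation}
from~\cite[Remark~4.13]{KSY-SJC}. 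In the Kähler case the term with \(\nabla_Y\targetACI\) vanishes and the remaining operator visibly commutes with \(\id\otimes\targetACI\), so \(\d_\Phi\mathcal{S}\) is complex linear at every point and kernel and cokernel inherit almost complex structures fiberwise over all of \(\mathcal{M}(A)\). To repair your proof, replace the \(X=0\) specialization by this pointwise formula (or rederive \(\differential{\mathcal{F}_\phi}\) at a general point of the zero locus of \(\mathcal{F}_\phi\)); the rest of your argument then goes through unchanged.
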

\begin{proof}
	The \(\UGL(1)\)-covariant differential \(\nabla^\cD\) on \(\cD\) together with the covariant derivative \(\nablabar\) on \(\tangent{N}\) induce a covariant derivative \(\nabla^{\Targetbundle}\) on \(\Targetbundle\) which satisfies
	\begin{equation}
		\nabla^{\Targetbundle}_{\overline{X}} \overline{e} = \nablabar^{\cD\otimes\Phi^*\tangent{N}}_X e
	\end{equation}
	for sections \(\overline{X}\) of \(\tangent{\Fieldspace}\) and \(\overline{e}\) of \(\Targetbundle\) which are constant in the trivializations~\ref{eq:TrivializationTHExponentialMap} of \(\tangent{\Fieldspace}\) and~\ref{eq:ComponentsF} of \(\Targetbundle\) with value \(X\in\VSec{\Phi^*\tangent{N}}\) and \(e\in\VSec{\cD\otimes\Phi^*\tangent{N}}^{1,0}\).
	The covariant derivative \(\nabla^{\Targetbundle}\) implies a decomposition \(\tangent{\Targetbundle}=\pi^*\tangent{\Fieldspace}\oplus \pi^*\Targetbundle\).
	For any map \(\Phi\colon M\times C\to N\) the differential of the section \(\mathcal{S}\colon \Fieldspace\to \Targetbundle\) is a map
	\begin{equation}
		\underline{\d_{\Phi}{\mathcal{S}}}(C)\colon \underline{\tangent[\Phi]{\Fieldspace}}(C) = \VSec{\Phi^*\tangent{N}}_0 \to \underline{\tangent[\mathcal{S}(\Phi)]{\Targetbundle}}(C) = \VSec{\Phi^*\tangent{N}}_0\oplus\VSec{\dual{\cD}\otimes \Phi^*\tangent{N}}^{0,1}_0.
	\end{equation}
	On the first summand of \(\underline{\tangent[\mathcal{S}]\Targetbundle}(C)\) the map \(\underline{\d_{\Phi}\mathcal{S}}(C)\) is the identity because \(\mathcal{S}\) is a section and on the second summand the map is
	\begin{equation}
		\left(\underline{\pi_2\circ\d_{\Phi}\mathcal{S}}(C)\right)Y
		= \frac12\left.\left(1+\ACI\otimes \targetACI\right)\left(\nabla Y - \frac12\left(\id_{\cotangent{M}}\otimes \targetACI \nabla_Y \targetACI\right) \differential{\Phi}\right)\right|_{\cD},
	\end{equation}
	see~\cite[Remark~4.13]{KSY-SJC}.
	As \(N\) is Kähler, the summand with the covariant derivative of \(\targetACI\) vanishes.

	If we equip \(\Targetbundle\) with the almost complex structure given by \(\id\otimes \targetACI\) on \(\VSec{\dual{\cD}\otimes \Phi^*\tangent{N}}^{0,1}_0\), \(\underline{\d_{\Phi}{\mathcal{S}}}(C)\) is complex linear.
	Consequently, kernel and cokernel of \(\differential{\mathcal{S}}\) carry an almost complex structure.
\end{proof}

\begin{rem}\label{rem:Integrability}
	As in the classical case, we expect the almost complex structure on \(\tangent{\mathcal{M}(A)}\) to be integrable if the target is Kähler, compare~\cite[Remark~3.2.6]{McDS-JHCST}.
	However, the split atlas of \(\mathcal{M}(A)\) from Theorem~\ref{thm:ModuliSpaceIsSplit} does not yield a holomorphic split atlas because the charts obtained via implicit function theorem are not holomorphic charts as explained in Remark~\ref{rem:ChartsNotComplex}.
\end{rem}
 
\counterwithin{equation}{section}
\section{Automorphisms of \texorpdfstring{\(\ProjectiveSpace[\C]{1|1}\)}{PC1|1}}\label{Sec:AutomorphismsOfPC11}
The moduli spaces of super stable curves and super stable maps of genus zero arise as quotients of products of copies of \(\ProjectiveSpace[\C]{1|1}\) and \(\mathcal{M}(A)\)by the superconformal automorphisms of \(\ProjectiveSpace[\C]{1|1}\), see~\cite[Section~4.3]{KSY-SQCI} as well as Propositions~\ref{prop:TorusActionM0k} and~\ref{prop:TorusActionM0kA} below.
In order to study torus actions on those quotients we need a detailed understanding of how far the superconformal automorphisms respect the split structure of \(\ProjectiveSpace[\C]{1|1}\) and \(\mathcal{M}(A)\).
After recalling basic properties of the group \(\SCP\) of superconformal automorphisms of \(\ProjectiveSpace[\C]{1|1}\) we show that the subgroup of automorphisms preserving the split structure is \(\SGL_{\C}(2)\).
Furthermore, we give detailed expressions for the action of \(\SGL_{\C}(2)\) and its left cosets \(\SUSY = \SGL_{\C}(2)\diagdown\SCP\) on \(B\)-points of \(\ProjectiveSpace[\C]{1|1}\) and \(\mathcal{M}(A)\).

Recall that a \(B\)-point \(p\colon B\to \ProjectiveSpace[\C]{1|1}\) can be described in several different ways.
The representation \([P_1:P_2:\Pi]\), \(P_i\in{\left(\cO_B\right)}_0\), \(\Pi\in{\left(\cO_B\right)}_1\) via superprojective coordinates is not unique because for any invertible \(\lambda\in \cO_B\), \([\lambda P_1:\lambda P_2:\lambda\Pi]\) represents the same point.
The representation of \(p\) via superconformal coordinates is unique but coordinate dependent:
\begin{align}
	p^\#z_1 &= p_1, &
	p^\#\theta_1 &= \pi_1; &
	p^\#z_2 &= p_2, &
	p^\#\theta_2 &= \pi_2.
\end{align}
Here,
\begin{align}
	p_1 &= \frac{P_1}{P_2}, &
	\pi_1 &= \frac{\Pi}{P_2}; &
	p_2 &= -\frac{P_2}{P_1}, &
	\pi_2 &= \frac{\Pi}{P_1}.
\end{align}
It was proven in~\cite[Lemma~4.2.2]{KSY-SQCI} that one can alternatively describe the \(B\)-point \(p\) via a pair \((\Smooth{p}, \sigma)\), where \(\Smooth{p}\in \underline{\ProjectiveSpace[\C]{1}}(B)\) is a \(B\)-point of \(\ProjectiveSpace[\C]{1}\) and \(\sigma\in\VSec{\Smooth{p}^*S}\).
The point \(\Smooth{p}\) is given in the coordinates \(z_i\) of \(\ProjectiveSpace[\C]{1}\) by \(p_i\) and \(\sigma\) is given by \(\Smooth{p}^*s_1 \pi_1=\Smooth{p}^*s_2 \pi_2\), where \(s_j=i^*D_j\) as in Example~\ref{ex:SJConP11}.

Recall that the group \(\underline{\GL_\C(2|1)}(B)\) acts on \(\underline{\ProjectiveSpace[\C]{1|1}}(B)\) by right-multiplication on the projective coordinates:
\begin{equation}
	[\tilde{Z}_1:\tilde{Z}_2:\tilde{\Theta}]
	= [Z_1:Z_2:\Theta]
	\begin{pmatrix}
		a & c & \gamma \\
		b & d & \delta \\
		\alpha & \beta & e\\
	\end{pmatrix}
\end{equation}
That is, any matrix \(L\in \underline{\GL_\C(2|1)}(B)\) induces a family of holomorphic automorphisms \(\Xi\colon B\times \ProjectiveSpace[\C]{1|1}\to \ProjectiveSpace[\C]{1|1}\).
Any family of holomorphic automorphisms of \(\ProjectiveSpace[\C]{1|1}\) arises from a matrix in \(\underline{\GL_\C(2|1)}(B)\), see~\cite{FK-PLSGSPAP11}.
Two matrices \(L\) and \(L'\) induce the same family of holomorphic automorphisms of \(\ProjectiveSpace[\C]{1|1}\) if and only if \(L'= \lambda L\) an even invertible function \(\lambda\in {\left(\cO_B\right)}_0\).

It was shown in~\cites[Chapter~2]{M-TNCG}{FK-PLSGSPAP11}, that the automorphism~\(\Xi\) is superconformal, that is preserves the distribution \(\cD\), if and only if it arises from an element of \(\underline{\SpGL_{\C}(2|1)}(B)\subset\underline{\GL_\C(2|1)}(B)\).
The matrix \(L\in \underline{\GL_\C(2|1)}(B)\) lies in \(\underline{\SpGL_{\C}(2|1)}(B)\) if its entries satisfy
\begin{equation}\label{eq:Sp21}
	\begin{aligned}
		ad - bc - \gamma\delta &= 1, &
		c\alpha -a\beta &= e\gamma, \\
		e^2 + 2\alpha\beta &= 1, &
		d\alpha -b\beta &= e\delta.
	\end{aligned}
\end{equation}
The elements of \(\underline{\SpGL_\C(2|1)}(B)\) satisfy also
\begin{align}
	\alpha\beta &= \gamma\delta, &
	e\alpha &= a\delta - b\gamma, &
	e\beta &= c\delta - d\gamma.
\end{align}

The two matrices \(L\) and \(-L\) induce the same superconformal automorphism of \(\ProjectiveSpace[\C]{1|1}\).
That is, the group of superconformal automorphisms of \(\ProjectiveSpace[\C]{1|1}\) is given by
\begin{equation}
	\SCP = \faktor{\SpGL_\C(2|1)}{\Z_2}.
\end{equation}
Note that the quotient by \(\Z_2\) identifies two connected components of \(\SpGL_\C(2|1)\) characterized by \(\Red{e}=\pm 1\).
For the remainder of this article we choose to realize \(\SCP\subset \SpGL_\C(2|1)\) as the connected component such that \(\Red{e}=1\).
Both \(\SpGL_\C(2|1)\) and \(\SCP\) are complex super Lie groups of complex dimension \(3|2\).

A matrix \(L\in \underline{\SCP}(B)\) induces the automorphisms \(\Xi\colon \ProjectiveSpace[\C]{1|1}\times B\to \ProjectiveSpace[\C]{1|1}\times B\) over \(B\) which is given in the coordinates \((z_i, \theta_i)\) by
\begin{align}
	\Xi^\#z_1
	&= \frac{a z_1 + b + \theta_1\alpha}{cz_1 + d + \theta_1\beta}
	= \frac{az_1 + b}{cz_1+ d} + \theta_1\frac{e\left(\gamma z_1 + \delta\right)}{{\left(c z_1 + d\right)}^2} \\
	\Xi^\#\theta_1
	&= \frac{\gamma z_1 + \delta + \theta_1 e}{c z_1 + d + \theta_1 \beta}
	= \frac{\gamma z_1 + \delta}{c z_1 + d} + \theta_1\frac{1-\gamma\delta}{e\left(cz_1 + d\right)} \\
	\Xi^\#z_2
	&= -\frac{c - dz_2 + \theta_2\beta}{a - bz_2 + \theta_2\alpha}
	= -\frac{c - dz_2}{a - bz_2} - \theta_2\frac{e\left(\gamma - \delta z_2\right)}{{\left(a - bz_2\right)}^2} \\
	\Xi^\#\theta_2
	&= \frac{\gamma - \delta z_2 + \theta_2 e}{a - b z_2 + \theta_2 \alpha}
	= \frac{\gamma - \delta z_2}{a - b z_2} + \theta_2\frac{1-\gamma\delta}{e\left(a - bz_2\right)}
\end{align}
For details of this calculation see~\cite[Example~9.4.3]{EK-SGSRSSCA}.

A superconformal automorphism \(\Xi\) is determined by the image of the three \(B\)-points \(0=[0:1:0]\), \(1=[1:1:0]\) and \(\infty=[1:0:0]\).
However, not all three \(B\)-points \(p_1\), \(p_2\) and \(p_3\) can arise as the image of \(0\), \(1\) and \(\infty\).
Instead, it was shown in~\cite[Chapter~2]{M-TNCG} and also in~\cite[Section~3.1]{KSY-SQCI} that for any three \(B\)-points \(p_1\), \(p_2\) and \(p_3\) there is an odd function \(\epsilon\in {\left(\cO_B\otimes \C\right)}_1\) determined up to sign and a unique superconformal automorphism \(\Xi\) such that \(\Xi(p_1)=0\), \(\Xi(p_2)=1_\epsilon=[1:1:\epsilon]\), and \(\Xi(p_3)=\infty\).
In other words, the moduli space \(\mathcal{M}_{0,3}\) of three distinct points of \(\ProjectiveSpace[\C]{1|1}\) is isomorphic to \(\faktor{\C^{0|1}}{\Z_2}\).

Using \(\SCP\subset \SpGL_\C(2|1)\), that is the sign choice \(e=1-\alpha\beta\), the inverse of an automorphism of \(\ProjectiveSpace[\C]{1|1}\) induced by the matrix \(L\) is induced by
\begin{equation}
	L^{-1} =
	\begin{pmatrix}
		d & -c & \beta \\
		-b & a & -\alpha \\
		-\delta & \gamma & 1-\alpha\beta\\
	\end{pmatrix}.
\end{equation}
Every element of \(\underline{\SCP}(B)\) can be decomposed uniquely as
\begin{equation}\label{eq:ElementOfSp21Decomposition}
	\begin{pmatrix}
		a & c & \gamma \\
		b & d & \delta \\
		\alpha & \beta & e\\
	\end{pmatrix}
	=
	\begin{pmatrix}
		a\left(1-\frac12\alpha\beta\right) & c\left(1-\frac12\alpha\beta\right) & 0 \\
		b\left(1-\frac12\alpha\beta\right) & d\left(1-\frac12\alpha\beta\right) & 0 \\
		0 & 0 & 1\\
	\end{pmatrix}
	\begin{pmatrix}
		1+\frac12\alpha\beta & 0 & -\beta \\
		0 & 1+\frac12\alpha\beta & \alpha \\
		\alpha & \beta & 1-\alpha\beta\\
	\end{pmatrix}
\end{equation}
where the determinant of the upper left block of the first matrix is one.
This is formalized in the following lemma:

\begin{lemma}\label{lemma:Sp21QuotientBySL2}
	There is an embedding \(\SGL_\C(2)\hookrightarrow \SCP\) of super Lie subgroups.
	The space of left cosets
	\begin{equation}
		\SUSY = \SGL_{\C}(2)\diagdown\SCP
	\end{equation}
	is isomorphic to \(\C^{0|2}\).
\end{lemma}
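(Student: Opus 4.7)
The plan is to construct the embedding explicitly and then use the decomposition~\eqref{eq:ElementOfSp21Decomposition}, which is already recorded in the text above the lemma, to read off the coset space.

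For the embedding, I would define \(\iota\colon \SGL_\C(2)\hookrightarrow\SCP\) by sending a matrix \(\begin{pmatrix}a & c\\ b & d\end{pmatrix}\) with \(ad-bc=1\) to the block matrix with this block in the upper left, \(1\) in the \((3,3)\)-slot and zeros elsewhere. Setting \(\alpha=\beta=\gamma=\delta=0\) and \(e=1\) in the defining equations~\eqref{eq:Sp21} reduces them to \(ad-bc=1\), and the image lies in the chosen component \(\Red{e}=1\), so indeed \(\iota\) lands in \(\SCP\) rather than merely \(\SpGL_\C(2|1)\). Multiplicativity is immediate from block multiplication of matrices, injectivity is obvious, and the same formulas work on \(B\)-points for any superpoint \(B\), so \(\iota\) is a closed immersion of super Lie groups.

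For the coset identification, I would verify three items about~\eqref{eq:ElementOfSp21Decomposition}. First, the left factor lies in the image of \(\iota\): its upper-left \(2\times 2\) block has determinant \((ad-bc)(1-\tfrac12\alpha\beta)^2\), and using \((1-\tfrac12\alpha\beta)^2=1-\alpha\beta\) (because \((\alpha\beta)^2=0\)) together with \(ad-bc=1+\gamma\delta=1+\alpha\beta\), this determinant equals \((1+\alpha\beta)(1-\alpha\beta)=1\). Second, the right factor satisfies~\eqref{eq:Sp21}; for instance, \(e^2+2\alpha\beta=(1-\alpha\beta)^2+2\alpha\beta=1\). Third, the product of the two factors reproduces the original matrix, which I would check entrywise. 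The nontrivial entries are the odd ones: the \((1,3)\) entry of the product evaluates to \(c\alpha-a\beta=e\gamma\) by~\eqref{eq:Sp21}, and this must equal \(\gamma\), which forces \(\alpha\beta\gamma=0\); but the relation \(\alpha\beta=\gamma\delta\) gives \(\alpha\beta\gamma=\gamma\delta\gamma=-\gamma^2\delta=0\), and analogously for \((2,3)\).

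With the decomposition in hand, the left coset space is identified as follows. The \((3,1)\) and \((3,2)\) entries of any matrix in \(\underline{\SCP}(B)\) are invariant under left multiplication by elements of \(\iota(\SGL_\C(2))\), since the bottom row of any such element is \((0,0,1)\). Hence the map \(\underline{\SCP}(B)\to{(\cO_B)}_1^2\), \(L\mapsto(\alpha,\beta)\), descends to a well-defined map \(\underline{\SUSY}(B)\to\underline{\C^{0|2}}(B)\) which is injective by this invariance and surjective because the right factor in~\eqref{eq:ElementOfSp21Decomposition} provides an explicit representative for any prescribed \((\alpha,\beta)\). Naturality in \(B\) gives the claimed isomorphism of super manifolds. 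The main obstacle is the entrywise verification of the product in~\eqref{eq:ElementOfSp21Decomposition}, which forces the use of the auxiliary relations \(\alpha\beta=\gamma\delta\), \(c\alpha-a\beta=e\gamma\) and \(d\alpha-b\beta=e\delta\) beyond the defining constraints~\eqref{eq:Sp21}; once these are marshalled, the remainder of the argument is formal.
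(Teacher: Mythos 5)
Your proof is correct, and it takes a more self-contained route than the paper's. The embedding is identical, and both arguments hinge on the decomposition~\eqref{eq:ElementOfSp21Decomposition}; but where the paper's proof invokes the general quotient-existence result~\cite[Proposition~2.5.4]{KSY-SQCI} (the action of \(\SGL_\C(2)\) on \(\SCP\) is free, and proper on reduced points) and then simply asserts that the quotient is realized by the supersymmetry matrices, you (a) actually verify~\eqref{eq:ElementOfSp21Decomposition}, including the determinant computation for the left factor and the use of the auxiliary relations \(\alpha\beta=\gamma\delta\), \(c\alpha-a\beta=e\gamma\), \(d\alpha-b\beta=e\delta\), which the paper states without proof, and (b) identify the coset space directly: reading off the bottom-row entries \((\alpha,\beta)\) is invariant under left multiplication by \(\iota(\SGL_\C(2))\) and induces, naturally in \(B\), a bijection from the naive cosets to \(\underline{\C^{0|2}}(B)\), so the coset functor is representable by \(\C^{0|2}\) and no external quotient theorem is needed. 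The paper's route is shorter and fits the quotient/superorbifold machinery used throughout the article; yours buys an explicit identification of the \(B\)-points of \(\SUSY\) with \(\underline{\SGL_\C(2)}(B)\diagdown\underline{\SCP}(B)\), a description the paper relies on implicitly later (e.g.\ for \(\underline{\SUSY}(\R^{0|1})\) in Lemma~\ref{lemma:SUSYR01NormalSubgroup} and in the identification~\eqref{eq:IdentificationSUSYR01H0S}). One attribution to correct: injectivity of your induced map does not follow from the invariance alone, which only gives well-definedness; it is precisely the statement that every matrix with bottom-row entries \((\alpha,\beta)\) lies in the coset of the corresponding supersymmetry matrix, i.e.\ it is again the decomposition~\eqref{eq:ElementOfSp21Decomposition}. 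Since you have fully established that decomposition, this is a matter of wording, not a gap.
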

\begin{proof}
	The group homomorphism
	\begin{equation}
		\begin{split}
			\SGL_{\C}(2) &\hookrightarrow \SCP \\
			\begin{pmatrix}
				a & c \\
				b & d \\
			\end{pmatrix}
			&\mapsto
			\begin{pmatrix}
				a & c & 0 \\
				b & d & 0 \\
				0 & 0 & 1\\
			\end{pmatrix}
		\end{split}
	\end{equation}
	is an embedding, that is, \(\SGL_{\C}(2)\) is a subgroup of \(\SCP\).
	The group \(\SGL_{\C}(2)\) acts freely from the left on \(\SCP\) and \(\underline{\SGL_{\C}(2)}(\R^{0|0})\) acts properly on \(\underline{\SCP}(\R^{0|0}) = \underline{\SGL_{\C}(2)}(\R^{0|0})\).
	Consequently, by~\cite[Proposition~2.5.4]{KSY-SQCI} or the earlier references~\cites{BBRHP-QS}{A-STGM}{AHW-SO}, the quotient \(\SUSY=\SGL_{\C}(2)\diagdown\SCP\) exists.
	The quotient space \(\SUSY\) is of given by \(\C^{0|2}\) in the form of the matrices
	\begin{equation}
		\begin{pmatrix}
			1+\frac12\alpha\beta & 0 & -\beta \\
			0 & 1+\frac12\alpha\beta & \alpha \\
			\alpha & \beta & 1-\alpha\beta\\
		\end{pmatrix}
	\end{equation}
	of Equation~\eqref{eq:ElementOfSp21Decomposition}.
\end{proof}

\begin{rem}
	Notice that \(\SGL_{\C}(2)\) is not a normal subgroup of \(\SCP\).
	For example, the following conjugation of an element of \(\SGL_{\C}(2)\) with an element of supersymmetry type does not yield an element of \(\SGL_{\C}(2)\).
	\begin{equation}
		\begin{split}
			\MoveEqLeft
			\begin{pmatrix}
				1 + \frac12\sigma\tau & 0 & -\tau \\
				0 & 1 + \frac12\sigma\tau & \sigma \\
				\sigma & \tau & 1 - \sigma\tau \\
			\end{pmatrix}
			\begin{pmatrix}
				a & c & 0 \\
				b & d & 0 \\
				0 & 0 & 1 \\
			\end{pmatrix}
			\begin{pmatrix}
				1 + \frac12\sigma\tau & 0 & \tau \\
				0 & 1 + \frac12\sigma\tau & -\sigma \\
				-\sigma & -\tau & 1 - \sigma\tau \\
			\end{pmatrix}\\
			&=
			\begin{pmatrix}
				1 + \frac12\sigma\tau & 0 & -\tau \\
				0 & 1 + \frac12\sigma\tau & \sigma \\
				\sigma & \tau & 1 - \sigma\tau \\
			\end{pmatrix}
			\begin{pmatrix}
				a\left(1+\frac12\sigma\tau\right) & c\left(1+\frac12\sigma\tau\right) & a\tau - c\sigma \\
				b\left(1+\frac12\sigma\tau\right) & d\left(1+\frac12\sigma\tau\right) & b\tau - d\sigma \\
				-\sigma & -\tau & 1-\sigma\tau \\
			\end{pmatrix} \\
			&=
			\begin{pmatrix}
				a\left(1+\sigma\tau\right) - \sigma\tau & c\left(1+\sigma\tau\right) & a\tau - c\sigma - \tau \\
				b\left(1+\sigma\tau\right) & d\left(1+\sigma\tau\right) -\sigma\tau & b\tau - d\sigma + \sigma \\
				a\sigma + b\tau - \sigma & c\sigma + d\tau - \tau & \left(a + d -2\right)\sigma\tau + 1 \\
			\end{pmatrix}
		\end{split}
	\end{equation}
\end{rem}
\begin{rem}
	Note that the set of matrices of the supersymmetry type in Equation~\eqref{eq:ElementOfSp21Decomposition} are not closed under multiplication and do in general not form a subgroup as the following example shows:
	\begin{multline}\label{eq:SUSYNotGroup}
			\begin{psmallmatrix}
				1+\frac12\sigma\tau & 0 & -\tau \\
				0 & 1+\frac12\sigma\tau & \sigma \\
				\sigma & \tau & 1 - \sigma\tau \\
			\end{psmallmatrix}
			\begin{psmallmatrix}
				1+\frac12 \alpha\beta & 0 & -\beta \\
				0 & 1+\frac12\alpha\beta & \alpha \\
				\alpha & \beta & 1-\alpha\beta\\
			\end{psmallmatrix}
			= \\
			\begin{psmallmatrix}
				\left(1+\frac12\sigma\tau\right)\left(1+\frac12\alpha\beta\right) - \tau\alpha & -\tau\beta & -\left(1+\frac12\sigma\tau\right)\beta - \tau\left(1-\alpha\beta\right) \\
				\sigma\alpha & \left(1+\frac12\sigma\tau\right)\left(1+\frac12\alpha\beta\right) + \sigma\beta & \left(1+\frac12\sigma\tau\right)\alpha + \sigma\left(1-\alpha\beta\right)\\
				\sigma\left(1+\frac12\alpha\beta\right) + \left(1-\sigma\tau\right)\alpha & \tau\left(1+\frac12\alpha\beta\right) + \left(1-\sigma\tau\right)\beta & -\sigma\beta + \tau\alpha + \left(1-\sigma\tau\right)\left(1-\alpha\beta\right) \\
			\end{psmallmatrix}
	\end{multline}
\end{rem}
We will need the following partial inverse to Lemma~\ref{lemma:Sp21QuotientBySL2}:
\begin{lemma}\label{lemma:SUSYR01NormalSubgroup}
	Equip \(\underline{\SUSY}(\R^{0|1}) = {\left(\C^{0|2}\otimes \cO_{\R^{0|1}}\right)}_0 = \C^2\) with the group structure given by addition.
	Then \(\underline{\SUSY}(\R^{0|1})\hookrightarrow \underline{\SCP}(\R^{0|1})\) is a normal subgroup and the quotient is given by
	\begin{equation}
		\faktor{\underline{\SCP}(\R^{0|1})}{\underline{\SUSY}(\R^{0|1})}
		= \SGL_{\C}(2).
	\end{equation}
	Furthermore, \(\SGL_{\C}(2)\) acts on \(\underline{\SUSY}(\R^{0|1})\) from the left and
	\begin{equation}
		\underline{\SpGL(2|1)}(\R^{0|1}) = \SGL_{\C}(2)\ltimes \underline{\SUSY}(\R^{0|1}).
	\end{equation}
\end{lemma}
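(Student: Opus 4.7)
The plan rests on a single simplification: every element of $\underline{\SCP}(\R^{0|1})$ has odd entries of the form $c\lambda$ with $c\in\C$ and $\lambda$ the odd coordinate of $\R^{0|1}$, so any product of two odd entries vanishes. Applied to the factorization~\eqref{eq:ElementOfSp21Decomposition}, the terms $\frac12\alpha\beta$ and $1-\alpha\beta$ collapse to $0$ and $1$ respectively, so the supersymmetry-type matrices in $\underline{\SUSY}(\R^{0|1})$ reduce to
\begin{equation}
	\begin{pmatrix} 1 & 0 & -\beta \\ 0 & 1 & \alpha \\ \alpha & \beta & 1 \end{pmatrix},
\end{equation}
parametrized by $(\alpha,\beta)\in\C^{0|2}\otimes{(\cO_{\R^{0|1}})}_1 \cong \C^2$.

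First I would verify that the group law on $\underline{\SUSY}(\R^{0|1})$ is addition. Taking the product computed in Equation~\eqref{eq:SUSYNotGroup} and dropping all quadratic terms in odd quantities, the resulting matrix has upper-right block entries $-(\beta+\tau)$ and $\alpha+\sigma$ and lower row $(\alpha+\sigma,\beta+\tau,1)$, which is again of supersymmetry type with parameters $(\alpha+\sigma,\beta+\tau)$. Thus $\underline{\SUSY}(\R^{0|1})$ is closed under multiplication, contains the identity and inverses ($(\alpha,\beta)^{-1}=(-\alpha,-\beta)$), and the group structure is literally the additive structure on $\C^2$.

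Next I would establish the semidirect product decomposition. Applying~\eqref{eq:ElementOfSp21Decomposition} under the vanishing of $\alpha\beta$ yields a \emph{unique} factorization of any $L\in\underline{\SCP}(\R^{0|1})$ as a product of an $\SGL_{\C}(2)$-matrix and a supersymmetry matrix, which gives a bijection $\SGL_{\C}(2)\times\underline{\SUSY}(\R^{0|1})\to\underline{\SCP}(\R^{0|1})$. To upgrade this to a semidirect product I would conjugate a supersymmetry element by an element of $\SGL_{\C}(2)$:
\begin{equation}
	\begin{pmatrix} a & c & 0 \\ b & d & 0 \\ 0 & 0 & 1 \end{pmatrix}
	\begin{pmatrix} 1 & 0 & -\beta \\ 0 & 1 & \alpha \\ \alpha & \beta & 1 \end{pmatrix}
	\begin{pmatrix} d & -c & 0 \\ -b & a & 0 \\ 0 & 0 & 1 \end{pmatrix}
	=
	\begin{pmatrix} 1 & 0 & -\beta' \\ 0 & 1 & \alpha' \\ \alpha' & \beta' & 1 \end{pmatrix},
\end{equation}
with $\alpha'=d\alpha-b\beta$ and $\beta'=-c\alpha+a\beta$; the computation is immediate once one remembers $ad-bc=1$ and that all products of two odd entries vanish. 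This simultaneously proves that $\underline{\SUSY}(\R^{0|1})$ is a normal subgroup, identifies the induced left $\SGL_{\C}(2)$-action on $\C^2$ with the inverse-transpose of the standard representation, and shows that the quotient is $\SGL_{\C}(2)$.

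There is no serious obstacle; the only subtlety is bookkeeping. The remark following Lemma~\ref{lemma:Sp21QuotientBySL2} shows that over a general base $B$ neither closure of supersymmetry matrices under multiplication nor normality in $\underline{\SCP}(B)$ hold, precisely because products of two odd elements need not vanish. Restricting to $B=\R^{0|1}$ is exactly the condition that makes all such obstructions disappear, so the content of the lemma is that the natural identifications of $\underline{\SUSY}(B)$ and $\SGL_{\C}(2)\subset\underline{\SCP}(B)$ become compatible with the group structure precisely on $\R^{0|1}$-points.
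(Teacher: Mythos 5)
Your proposal is correct and rests on the same two pillars as the paper's proof: on \(\R^{0|1}\)-points every product of two odd quantities vanishes, so Equation~\eqref{eq:SUSYNotGroup} degenerates to show that supersymmetry matrices form a subgroup with additive group law, and the decomposition~\eqref{eq:ElementOfSp21Decomposition} becomes an exact unique factorization \(\underline{\SCP}(\R^{0|1})=\SGL_{\C}(2)\cdot\underline{\SUSY}(\R^{0|1})\). The one structural difference is the normality argument. The paper conjugates a supersymmetry matrix by a \emph{general} element of \(\underline{\SCP}(\R^{0|1})\), odd entries \(\alpha,\beta,\gamma,\delta\) included, and checks in a single three-matrix computation that the result is again of supersymmetry type, so normality is immediate. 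You conjugate only by lifts of \(\SGL_{\C}(2)\); this computation is lighter and correct (indeed \(\alpha'=d\alpha-b\beta\), \(\beta'=-c\alpha+a\beta\)), but by itself it does not give normality in the full group — your word "simultaneously" elides a splice. Writing \(U=\underline{\SUSY}(\R^{0|1})\), you must combine the conjugation with your two earlier facts: any \(g\) factors as \(g=hu\) with \(h\in\SGL_{\C}(2)\) and \(u\in U\), so \(gUg^{-1}=h\left(uUu^{-1}\right)h^{-1}=hUh^{-1}=U\), using that \(U\) is a subgroup. This is a standard one-line argument, so the gap is presentational rather than substantive; once stated, your proof is complete, and your route buys a smaller matrix computation together with an explicit formula for the \(\SGL_{\C}(2)\)-action on \(U\). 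Finally, note that your formula is the inverse of the action \((\alpha,\beta)\mapsto(a\alpha+b\beta,\,c\alpha+d\beta)\) recorded in~\eqref{eq:PGLActionOnH0S}: this is not a discrepancy but the expected difference between conjugating by \(h\) (your choice) and by \(h^{-1}\) (the paper's), and either convention exhibits the semidirect product structure.
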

\begin{proof}
	The Equation~\eqref{eq:SUSYNotGroup} shows that matrices of supersymmetry type in general do not form a group except if all products of odd quantities vanish.
	As \(\R^{0|1}\) has only one odd variable, all products of odd quantities vanish and hence \(\underline{\SUSY}(\R^{0|1})\) is a subgroup of \(\underline{\SCP}(\R^{0|1})\).
	The following calculation shows that the conjugation of a matrix in \(\underline{\SUSY}(\R^{0|1})\) with a matrix in \(\SCP\) is again in \(\underline{\SUSY}(\R^{0|1})\).
	\begin{equation}
		\begin{split}
			&\begin{pmatrix}
				d & -c & \beta \\
				-b & a & -\alpha \\
				-\delta & \gamma & 1\\
			\end{pmatrix}
			\begin{pmatrix}
				1 & 0 & \tau \\
				0 & 1 & -\sigma \\
				\sigma & \tau & 1
			\end{pmatrix}
			\begin{pmatrix}
				a & c & \gamma \\
				b & d & \delta \\
				\alpha & \beta & 1\\
			\end{pmatrix} \\
			&=
			\begin{pmatrix}
				d & -c & \beta \\
				-b & a & -\alpha \\
				-\delta & \gamma & 1\\
			\end{pmatrix}
			\begin{pmatrix}
				a  & c & \gamma + \tau \\
				b  & d & \delta - \sigma \\
				\sigma a + \tau b + \alpha & \sigma c + \tau d + \beta & 1 \\
			\end{pmatrix} \\
			&=
			\begin{pmatrix}
				ad - bc & 0 & d\left(\gamma+\tau\right) - c\left(\delta-\sigma\right) + \beta\\
				0 & ad - bc & -b\left(\gamma+\tau\right) + a\left(\delta-\sigma\right) - \alpha \\
				-\delta a + \gamma b + \sigma a + \tau b + \alpha & -\delta c + \gamma d + \sigma c + \tau d + \beta & 1 \\
			\end{pmatrix} \\
			&=
			\begin{pmatrix}
				1 & 0 & c\sigma + d\tau \\
				0 & 1 & -\left(a\sigma + b\tau\right) \\
				a\sigma + b\tau & c\sigma + d\tau & 1 \\
			\end{pmatrix}
		\end{split}
	\end{equation}
	Hence, \(\underline{\SUSY}(\R^{0|1})\) is a normal subgroup.
	As every element of \(\underline{\SCP}(\R^{0|1})\) can be decomposed uniquely into a product of an element of \(\underline{\SGL_{\C}(2)}(\R^{0|1})\) and \(\underline{\SUSY}(\R^{0|1})\), it follows that \(\underline{\SpGL(2|1)}(\R^{0|1}) = \SGL_{\C}(2)\ltimes \underline{\SUSY}(\R^{0|1})\).
\end{proof}

We will now study the action of  types of automorphisms on coordinates, superpoints of \(\ProjectiveSpace[\C]{1|1}\) and \(\targetACI\)-holomorphic maps.

\begin{ex}[Lift of elements of \(\SGL_\C(2)\)]\label{ex:LiftOfMöbiusTransformations}
	Let \(l=\begin{psmallmatrix}a& c\\b&d\end{psmallmatrix}\) be an element of \(\underline{\SGL_\C(2)}\) and 
	\begin{equation}
		L=
		\begin{pmatrix}
			a & c & 0 \\
			b & d & 0 \\
			0 & 0 & 1 \\
		\end{pmatrix}
	\end{equation}
	its image in \(\SCP\).
	The matrix \(L\) acts on the superprojective coordinates as follows:
	\begin{equation}
		[Z_1:Z_2:\Theta] \cdot L
		= [aZ_1 + b Z_2:cZ_1+dZ_2:\Theta]
	\end{equation}
	and the induced automorphism \(\Xi\colon \ProjectiveSpace[\C]{1|1}\to\ProjectiveSpace[\C]{1|1}\) acts in the superconformal coordinates~\((z_i, \theta_i)\) by
	\begin{equation}\label{eq:SCCoordinateExpressionLiftOfMöbius}
		\begin{aligned}
			\Xi^\# z_1 &= \frac{a z_1 + b}{cz_1 + d} &
			\Xi^\# z_2 &= -\frac{c - b z_2}{a - bz_2} \\
			\Xi^\#\theta_1 &= \theta_1\frac1{cz_1+d} &
			\Xi^\#\theta_2 &= \theta_2\frac1{a - bz_2}
		\end{aligned}
	\end{equation}

	The automorphism \(\Xi\) induces a holomorphic automorphism \(\xi\colon \ProjectiveSpace[\C]{1}\times B\to \ProjectiveSpace[\C]{1}\times B\) over \(B\)  such that \(\Xi\circ i = i\circ \xi\), compare~\cite[Corollary~3.3.14]{EK-SGSRSSCA}.
	The map \(\xi\) is given in coordinates by
	\begin{align}
		\xi^\#z_1 &= \frac{az_1 + b}{cz_1+ d}, &
		\xi^\#z_2 &= \frac{c - dz_2}{a - bz_2}.
	\end{align}
	That is, \(\xi\) is the Möbius transformation induced by the matrix \(l\) under \(\SGL_\C(2)\twoheadrightarrow \PGL_\C(2)=\Aut(\ProjectiveSpace[\C]{1})\).
	Pulling the equalities \(\differential{\Xi}(D_1) = \frac1{cz_1+d} \Xi^*D_1\) and \(\differential{\Xi}(D_2) = \frac1{a-bz_2} \Xi^*D_2\) back along \(i\) we obtain a map \(s_\Xi\colon S\to \xi^*S\) such that
	\begin{align}
		s_\Xi(s_1) &= \frac{1}{cz_1+d}\xi^*s_1, &
		s_\Xi(s_2) &= \frac{1}{a-bz_2}\xi^*s_2.
	\end{align}
	Notice that the map \(s_\Xi\) is not invariant under replacing \(l\) by \(-l\).
	That is, in contrast to \(\xi\) the map \(s_\Xi\) depends really on \(l\in \SGL_\C(2)\) and not on its image in \(\PGL_\C(2)\).

	Let now \(p\colon B\to \ProjectiveSpace[\C]{1|1}\) be a \(B\)-point of \(\ProjectiveSpace[\C]{1|1}\) which is given in superprojective coordinates by \([P_1:P_2:\Pi]\) or in superconformal coordinates by \((p_i, \pi_i)\).
	The automorphism \(\Xi\) acts on superpoints of \(\ProjectiveSpace[\C]{1|1}\) by sending \(p\) to \(\tilde{p}=\Xi\circ p\).
	The point \(\tilde{p}\) is given in superprojective coordinates by \([aP_1 + bP_2:cP_1+dP_2:\Pi]\) or in superconformal coordinates by
	\begin{align}
		\tilde{p}_1 &= \frac{ap_1 + b}{cp_1 + d}, &
		\tilde{\pi}_1 &= \frac{\pi_1}{cp_1+d}; &
		\tilde{p}_2 &= -\frac{c-dp_2}{a-bp_2}, &
		\tilde{\pi}_2 &= \frac{\pi_2}{a-bp_2}.
	\end{align}
	Note that \(\tilde{p}\) is described by \((\Smooth{\tilde{p}}, \tilde{\sigma})\), where \(\Smooth{\tilde{p}}=\xi\circ \Smooth{p}\) and \(\tilde{\sigma}=\left(\Smooth{p}^*s_\Xi\right) (\sigma)\).

	Let now \(\Phi\colon \ProjectiveSpace[\C]{1|1}\times B\to N\) be a super \(\targetACI\)-holomorphic curve such that for local coordinates~\(X^a\) of \(N\)
	\begin{equation}\label{eq:SJCSCCoordinates}
		\Phi^\#X^a
		= \varphi^a(z_1) + \theta_1\psi_1^a(z_1)
		= \varphi^a(z_2) + \theta_2 \psi_2^a(z_2).
	\end{equation}
	The automorphism \(\Xi\) acts on super \(\targetACI\)-holomorphic curves by precomposing with the inverse, that is, sending \(\Phi\) to \(\Phi_\Xi=\Phi\circ \Xi^{-1}\).
	Using~\cite[Proposition~12.2.3]{EK-SGSRSSCA}, we obtain for the component fields \((\varphi_\Xi, \psi_\Xi)\) of \(\Phi_\Xi\)
	\begin{align}
		\varphi_\Xi &= \varphi\circ\xi^{-1}, &
		\psi_\Xi &= {\left(\xi^{-1}\right)}^*\psi \circ s_{\Xi^{-1}}.
	\end{align}
	More explicitly, one can also plug the inverse of the coordinate transformations~\eqref{eq:SCCoordinateExpressionLiftOfMöbius} into~\eqref{eq:SJCSCCoordinates} and obtain
	\begin{align}
		\varphi_\Xi^a(z_1) &= \varphi^a\left(\frac{dz_1 - c}{-bz_1+ a}\right), &
		\psi_\Xi &= \dual{s}_1\otimes \left(\frac1{-bz_1+a}\psi_1^a\left(\frac{dz_1 - c}{-bz_1+ a}\right) \varphi_\Xi^*\partial_{X^a}\right); \\
		\varphi_\Xi^a(z_2) &= \varphi^a\left(\frac{b + az_2}{d + cz_2}\right), &
		\psi_\Xi &= \dual{s}_2\otimes \left(\frac1{d + cz_2}\psi_0^a\left(\frac{b + az_2}{d + cz_0}\right) \varphi_\Xi^*\partial_{X^a}\right).
	\end{align}

	The action of \(L\) on both \(\ProjectiveSpace[\C]{1|1}\) and \(\mathcal{M}(A)\) is completely determined by \(\xi\) and \(s_\Xi\).
	That is, the action of \(L\) respects their splittings.
\end{ex}
\begin{ex}[Reflection of the odd direction]\label{ex:ReflectionOfTheOddDirection}
	An important special case of the previous Example~\ref{ex:LiftOfMöbiusTransformations} is the reflection of the odd directions given by
	\begin{equation}
		L_-=
		\begin{pmatrix}
			-1 & 0 & 0 \\
			0 & -1 & 0 \\
			0 & 0 & 1\\
		\end{pmatrix}
	\end{equation}
	which acts in projective coordinates by \([Z_1:Z_2:\Theta]\cdot L_-=[-Z_1:-Z_2:\Theta]=[Z_1:Z_2:-\Theta]\), that is, it multiplies the odd projective coordinate by \(-1\).
	The induced automorphism \(\Xi_-\colon \ProjectiveSpace[\C]{1|1}\to \ProjectiveSpace[\C]{1|1}\) is given in superconformal coordinates \((z_i, \theta_i)\) by
	\begin{align}
		\Xi_-^\# z_1 &= z_1, &
		\Xi_-^\# \theta_1 &= -\theta_1, &
		\Xi_-^\# z_2 &= z_2, &
		\Xi_-^\# \theta_2 &= -\theta_2.
	\end{align}

	The automorphism \(\Xi_-\) satisfies \(\Xi_-\circ i = i\) and \(\differential{\Xi}(D_1)=-D_1\) as well as \(\differential{\Xi}(D_2)=-D_2\).
	Hence the induces morphism \(s_{\Xi_-}\colon S\to S\) is given by \(s_{\Xi_-}(s)=-s\) for all sections \(s\) of \(S\).

	The reflection of the odd directions sends the superpoint \(p=[p_1:p_2:\pi]\) to \(\tilde{p}=[p_1:p_2:-\pi]\), or in superconformal coordinates by \((\tilde{p}_i, \pi_i)=(p_i, -\pi_i)\).
	The component fields of \(\Phi_{\Xi_-} = \Phi\circ \Xi_-^{-1}\) are given by \((\varphi, -\psi)\).

	The reflection of the odd directions is the only non-trivial element of \(\SGL_\C(2)\) such that the induced action \(\xi\colon \ProjectiveSpace[\C]{1}\to \ProjectiveSpace[\C]{1}\) is the identity.
\end{ex}

\begin{ex}[Supersymmetry transformations]\label{ex:SupersymmetryTransformations}
	We call matrices of the form
	\begin{equation}
		L=
		\begin{pmatrix}
			1+\frac12\alpha\beta & 0 & -\beta \\
			0 & 1+\frac12\alpha\beta & \alpha \\
			\alpha & \beta & 1-\alpha\beta\\
		\end{pmatrix}
	\end{equation}
	supersymmetry transformations.
	The matrix \(L\) acts on superprojective coordinates by
	\begin{equation}
		\begin{split}
			\MoveEqLeft{}
			\left[Z_1:Z_2:\Theta\right] \cdot L \\
			&= \left[Z_1\left(1+\frac12\alpha\beta\right) + \Theta\alpha: Z_2\left(1+\frac12\alpha\beta\right) + \Theta\beta : -Z_1\beta + Z_2\alpha + \Theta\left(1-\alpha\beta\right)\right]
		\end{split}
	\end{equation}
	and the induced automorphism \(\Xi\colon \ProjectiveSpace[\C]{1|1}\to \ProjectiveSpace[\C]{1|1}\) is given in superconformal coordinates by
	\begin{equation}\label{eq:SCCoordinateExpressionSUSY}
		\begin{aligned}
			\Xi^\# z_1 &= z_1 + \theta_1 \left(-\beta z_1 + \alpha\right) &
			\Xi^\# z_2 &= z_2 + \theta_2\left(\beta + \alpha z_2\right) \\
			\Xi^\#\theta_1 &= -\beta z_1 + \alpha + \theta_1\left(1-\frac12\alpha\beta\right) &
			\Xi^\#\theta_2 &= -\beta - \alpha z_2 + \theta_2\left(1-\frac12\alpha\beta\right)
		\end{aligned}
	\end{equation}

	There is an embedding \(j\colon \ProjectiveSpace[\C]{1}\to \ProjectiveSpace[\C]{1|1}\) which is the identity on topological spaces such that \(\Xi\circ i = j\).
	The embedding \(j\) is given in superconformal coordinates by
	\begin{align}
		j^\#z_1 &= z_1, &
		j^\#z_2 &= z_2, \\
		j^\#\theta_1 &= \alpha - \beta z_1, &
		j^\#\theta_2 &= -\alpha z_2 - \beta.
	\end{align}
	Pulling back the equalities
	\begin{align}
		\differential{\Xi}(D_1) &= \left(1-\frac12\alpha\beta - \theta\beta\right)\Xi^*D_1, &
		\differential{\Xi}(D_2) &= \left(1-\frac12\alpha\beta + \theta\alpha\right)\Xi^*D_2,
	\end{align}
	yields the map \(s_\Xi\colon S=i^*\cD\to S'=j^*\cD\) such that
	\begin{align}
		s_\Xi(s_1) &= \left(1-\frac12\alpha\beta\right) s'_1, &
		s_\Xi(s_2) &= \left(1-\frac12\alpha\beta\right) s'_2.
	\end{align}

	The image \(\tilde{p}\) of a superpoint \(p=[P_1:P_2:\Pi]\) under \(\Xi\) is given in superconformal coordinates by
	\begin{equation}\label{eq:SUSYPointPC11}
		\begin{aligned}
			\tilde{p}_1 &= p_1 + \pi_1\left(\alpha - \beta p_1\right), &
			\tilde{p}_2 &= p_2 + \pi_2\left(\alpha p_2 + \beta\right), \\
			\tilde{\pi}_1 &= \alpha - \beta p_1 + \pi_1\left(1-\frac12\alpha\beta\right), &
			\tilde{\pi}_2 &= -\beta - \alpha p_2 + \pi_2\left(1-\frac12\alpha\beta\right).
		\end{aligned}
	\end{equation}

	To describe the action of \(\Xi\) on a super \(\targetACI\)-holomorphic curve \(\Phi\colon \ProjectiveSpace[\C]{1|1}\to N\) first note that there is a map \(j'\colon \ProjectiveSpace[\C]{1}\to \ProjectiveSpace[\C]{1|1}\) such that \(\Xi^{-1}\circ i = j'\) given in superconformal coordinates by
	\begin{align}
		j^\#z_1 &= z_1, &
		j^\#z_2 &= z_2, \\
		j^\#\theta_1 &= -\alpha + \beta z_1, &
		j^\#\theta_2 &= \alpha z_2 + \beta.
	\end{align}
	With respect to this map \(j'\) the component fields \((\varphi_\Xi, \psi_\Xi)\) of \(\Phi_\Xi=\Phi\circ \Xi^{-1}\) are given by
	\begin{align}
		\varphi_\Xi &= \Phi\circ\Xi^{-1}\circ i = \Phi\circ j' \\
		\psi_\Xi &= i^*\left(\differential{\left(\Phi\circ\Xi^{-1}\right)}\right)|_\cD
		= i^*\left({\left(\Xi^{-1}\right)}^*\differential\Phi\circ \differential\Xi^{-1}\right)|_\cD
		= {\left(j'\right)}^*\differential{\Phi}|_\cD\circ s_{\Xi^-1}
	\end{align}
	Here are explicit coordinate expressions for the component fields:
	\begin{equation}\label{eq:SUSYComponentFields}
		\begin{aligned}
			\varphi_\Xi^a(z_1) &= \varphi^a(z_1) + \left(-\alpha + \beta z_1\right)\psi_1^a(z_1) \\
			\varphi_\Xi^a(z_2) &= \varphi^a(z_2) + \left(\alpha z_2 + \beta\right)\psi_2^a(z_2) \\
			\psi_\Xi &= \dual{\tilde{s}}_1\otimes \left(\left(1+\frac12\alpha\beta\right)\psi_1^a\left(z_1\right) + \left(-\alpha + \beta z_1\right)\partial_{z_1}\varphi^a(z_1)\right)\varphi_j^*\partial_{X^a} \\
			\psi_\Xi &= \dual{\tilde{s}}_2\otimes \left(\left(1+\frac12\alpha\beta\right)\psi_2^a\left(z_2\right) + \left(\alpha z_2 + \beta\right)\partial_{z_2}\varphi^a(z_2)\right)\varphi_j^*\partial_{X^a} \\
		\end{aligned}
	\end{equation}
	Here, \(\dual{\tilde{s}_i}\) is the dual basis to \(\tilde{s}_i={\left(j'\right)}^*D_i\).
	Note that
	\begin{equation}
		\dual{s}_i\circ s_{\Xi^{-1}} = \dual{\tilde{s}}_i\left(1+\frac12\alpha\beta\right).
	\end{equation}

	In the case that the basis \(B\) possesses only one odd parameter, that is \(B=\R^{0|1}\), the above formulas take a particularly simple form.
	Let us identify \(\underline{\SUSY}(\R^{0|1}) = \underline{\C^{0|2}}(\R^{0|1}) = {\left(H^0(S)\otimes \cO_{\R^{0|1}}\right)}_0 \) via
	\begin{equation}\label{eq:IdentificationSUSYR01H0S}
		\begin{split}
			s\colon \underline{\SUSY}(\R^{0|1}) &\to {\left(H^0(S)\otimes \cO_{\R^{0|1}}\right)}_0 \\
			(\alpha, \beta) &\mapsto s_1\left(\alpha -\beta z_1\right) = -s_2\left(\alpha z_2 + \beta\right)
		\end{split}
	\end{equation}
	The map \(s\) is compatible with the action of \(\PGL_\C(2)\) in the following way:
	Elements \(g\) of \(\SGL_\C(2)\) act on \(\underline{\SUSY}(\R^{0|1})\) via conjugation of the lift to \(\SCP\), see~\ref{ex:LiftOfMöbiusTransformations} and the proof of Lemma~\ref{lemma:SUSYR01NormalSubgroup}.
	That is, the matrix
	\begin{equation}
		g=
		\begin{pmatrix}
			a & c \\
			b & d \\
		\end{pmatrix}
	\end{equation}
	sends \((\alpha, \beta)\) to \((a\alpha + b\beta, c\alpha + d\beta)\).
	We denote the image \(s(a\alpha + b\beta, c\alpha + d\beta)\) by \(s_g\), the Möbius transformation corresponding to \(g\) by \(\xi\) and the automorphism of \(\ProjectiveSpace[\C]{1|1}\) corresponding to the lift of \(g\) to \(\SCP\) by \(\Xi\) as in Example~\ref{ex:LiftOfMöbiusTransformations}.
	Then \(\xi^*s_g = s_\Xi s\):
	\begin{equation}\label{eq:PGLActionOnH0S}
		\begin{split}
			\xi^*s_g
			&= \xi^*\left(s_1\left(a\alpha+b\beta - \left(c\alpha +d\beta\right)z_1\right)\right)
			= \xi^*s_1\left(a\alpha + b\beta - \left(c\alpha + d\beta\right)\frac{az_1 + b}{cz_1+d}\right) \\
			&= \xi^*s_1\frac{\left(a\alpha + b\beta\right)\left(cz_1 + d\right) - \left(c\alpha + d\beta\right)\left(az_1 + b\right)}{cz_1 + d} \\
			&= \xi^*s_1\frac{\left(ad - bc\right)\alpha - \left(ad - bc\right)\beta z_1}{cz_1+d}
			= \frac1{cz_1+d} \xi^*s_1\left(\alpha - \beta z_1\right)
			= s_\Xi s
		\end{split}
	\end{equation}

	The supersymmetry transformation in direction \(s=s(\alpha, \beta)\) of the point \(p\colon \R^{0|1}\to \ProjectiveSpace[\C]{1|1}\) given by the tuple \((\Smooth{p}, \sigma)\) with \(\Smooth{p}\in\underline{\ProjectiveSpace[\C]{1}}(\R^{0|1})\) and \(\sigma\in \Smooth{p}^*S\) simplifies~\eqref{eq:SUSYPointPC11} to
	\begin{equation}\label{eq:SUSYR01Points}
		\mathfrak{SUSY}^1_s(\Smooth{p}, \sigma) = (\Smooth{p}, \sigma + \Smooth{p}^*s).
	\end{equation}
	Similarly, the Equations~\eqref{eq:SUSYComponentFields} simplify to
	\begin{equation}\label{eq:SUSYR01Maps}
		\mathfrak{SUSY}^1_s(\varphi, \psi) = \left(\varphi, \psi - \left<s, \differential\varphi\right>\right).
	\end{equation}
	Here, \(\differential\varphi\in H^0(\cotangent{\ProjectiveSpace[\C]{1}}\otimes_\C \varphi^*\tangent{N})= H^0(\dual{S}\otimes_\C \dual{S}\otimes_\C \varphi^*\tangent{N})\) is the differential and \(\left<s, \differential\varphi\right>\in H^0(\dual{S}\otimes_\C \varphi^*\tangent{N})\) is the contraction of the spinor \(s\) with the first cospinor factor of \(\differential\varphi\).

	Finally we want to point out the difference between the supersymmetry transformations studied in this example and the ones more commonly studied, see, for example,~\cite[Section~12.2]{EK-SGSRSSCA}.
	Here, we have considered a holomorphic automorphism of the super Riemann surface \(\ProjectiveSpace[\C]{1|1}\) whereas more classical supersymmetries are infinitesimal transformations.
	Moreover classical supersymmetries do not respect the holomorphic structure and hence involve terms with gravitinos.
\end{ex}
 \counterwithin{equation}{subsection}

\section{Torus action on the moduli spaces of super stable maps of genus zero}\label{Sec:SplitnessStableMapsModuli}
In~\cite{KSY-SQCI} we have constructed moduli spaces of super stable maps of genus zero with Neveu--Schwarz punctures and fixed tree type as superorbifolds.
In this section we construct smooth torus actions, that is smooth \(\C^*\)-actions, on the moduli space of super stable curves with \(k\)-marked points \(\mathcal{M}_{0,k}\), as well as the moduli spaces of super stable maps with \(k\) marked points \(\mathcal{M}_{0,k}(A)\) and super stable maps with fixed tree type \(\mathcal{M}_T(\Set{A_\alpha})\).
Key requirement is that the target space \(N\) is Kähler.
As we have seen in Chapter~\ref{Sec:SplitnessOfModuliSpace}, the moduli space of super \(\targetACI\)-holomorphic curves in a Kähler manifold is split and its normal bundle possesses a complex structure.
Using that fiberwise group actions on the normal bundle extend to the split supermanifold and using the analysis of the automorphisms of \(\ProjectiveSpace[\C]{1|1}\) from Section~\ref{Sec:AutomorphismsOfPC11} we obtain a torus action on the moduli spaces \(\mathcal{M}_{0,k}\), \(\mathcal{M}_{0,k}(A)\) and \(\mathcal{M}_T(\Set{A}_\alpha)\).

In Section~\ref{SSec:TorusActionSplitSupermanifolds} we describe how a group action on the normal bundle yields a group action on the corresponding split supermanifold.
In Section~\ref{SSec:TorusActionM0k} and Section~\ref{SSec:TorusActionM0kA} we discuss how torus actions on \(\mathcal{M}_{0,k}\) and \(\mathcal{M}_{0,k}(A)\) can be obtained despite the fact that the torus action does not descend to the quotient right away.
Furthermore, we give a detailed construction of the normal bundles \(N_k\) and \(N_{k,A}\).
The special case \(\mathcal{M}_{0,1}(A)\) is treated in Section~\ref{SSec:TorusActionM01A} and moduli spaces of fixed tree type in Section~\ref{SSec:TorusActionMTA}.

\subsection{Torus action on split supermanifolds}\label{SSec:TorusActionSplitSupermanifolds}
In this section we discuss some basic properties of group actions on supermanifolds which leave the reduced manifold invariant.
Furthermore, we show that a fiberwise linear group action on a vector bundle \(E\) induces a group action on the supermanifold \(\Split E\) and use this to construct several examples.

The action of a super Lie group \(G\) on a supermanifold \(M\) is given by a smooth map \(a\colon G\times M\to M\) satisfying the usual axioms of an action, see~\cites[Chapter~5.2]{EK-SGSRSSCA}[Chapter~2.5]{KSY-SQCI} as well as~\cites{DM-SUSY}{CCF-MFS}.
The essential example we have in mind is the action~\(a\) of the complex torus \(\C^*\) with coordinate \(t\) on \(\C^{m|n}\) with coordinates \((z^a, \theta^\alpha)\) by
\begin{align}
	a^\# z^a &= z^a, &
	a^\# \theta^\alpha &= t \theta^\alpha,
\end{align}
that is a rescaling of the odd directions leaving the even directions invariant.
More formally, we say that the action \(a\) of \(G\) on \(M\) leaves the reduced manifold \(i\colon \Red{M}\to M\) invariant if for every \(g\in \underline{G}(B)\) the induced multiplication \(a_g\colon M\times B\to M\times B\) leaves the map \(i\) invariant, that is, \(a_g\circ i = i\).
In that case the normal bundle \(N_{M/\Red{M}}\) and its dual carry a \(G\)-action.
Conversely, we have the following lemma:

\begin{prop}\label{prop:TorusActionSplitness}
	Let \(E\to\Smooth{M}\) be a vector bundle over a manifold of only even dimensions.
	Any fiberwise linear group action of a group \(G\) induces an action of \(G\) on the split supermanifold \(\Split E\) obtained from \(E\) leaving \(i\colon \Smooth{M}\to \Split E\) invariant.
\end{prop}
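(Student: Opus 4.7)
The plan is to work in the functor-of-points formulation and exploit the fact that the gluing data of $\Split E$ in~\eqref{eq:GluingSplitSuperManifold} is already built from fiberwise linear data, so that a fiberwise linear $G$-action on $E$ can be promoted tautologically to an action on $\Split E$.

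First I would unpack the local data. A fiberwise linear action of $G$ on $E$ amounts, in a trivialization of $E$ over an open $V_i\subset \Smooth{M}$ with fiber $F$, to a smooth map $A^i\colon G\times V_i \to \GL(F)$ with $A^i_{gh}(x) = A^i_g(x)\,A^i_h(x)$, intertwined on overlaps with the bundle transition functions $F_{ij}$ of $E$. Using this, I would define on each chart $\underline{U_i}(B) = V_i \otimes (\cO_B)_0 \oplus F\otimes (\cO_B)_1$ of $\Split E$ the action
\[
	g\cdot (\tilde p,\tilde\sigma) = \bigl(\tilde p,\,A^i_g(\tilde p)\,\tilde\sigma\bigr),
\]
where $A^i_g(\tilde p) = A^i_g\circ \tilde p$ is viewed as a $B$-point of $\GL(F)$ acting by $(\cO_B)_0$-linear extension on $\tilde\sigma\in F\otimes (\cO_B)_1$.

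What remains is to check (i) compatibility with the gluing maps $(f_{ij},F_{ij})$ of $\Split E$, which follows at once because the $G$-action is globally defined on $E$ and hence its local matrices satisfy $F_{ij}(\tilde p)\,A^i_g(\tilde p) = A^j_g(f_{ij}(\tilde p))\,F_{ij}(\tilde p)$; (ii) the group axioms and functoriality in $B$, which are inherited from the corresponding properties of the action on $E$; and (iii) smoothness as a map $G\times \Split E \to \Split E$, which is inherited from smoothness of the local maps $A^i$ since the supersmooth structure on $\Split E$ is determined entirely by this chart data.

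Finally, the invariance of the embedding $i\colon \Smooth{M}\to \Split E$ is immediate: in the chart $\underline{U_i}$ the map $i$ sends a $B$-point $\tilde p$ of $\Smooth{M}$ to $(\tilde p, 0)$, and clearly
\[
	g\cdot(\tilde p,0) = (\tilde p,\,A^i_g(\tilde p)\cdot 0) = (\tilde p,0).
\]
I do not anticipate a substantive obstacle: the statement really just formalizes the remark that the odd-coordinate coefficients of a split supermanifold transform fiberwise-linearly, so any fiberwise linear action of a group on the underlying bundle lifts directly. The only point requiring some care is keeping the Molotkov--Sachse functor-of-points framework consistent, in particular in the infinite-dimensional setting where $F$ need not be finite-dimensional.
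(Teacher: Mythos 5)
Your proposal is correct and takes essentially the same route as the paper's proof: both define the action chart by chart, fixing the even coordinates and letting \(g\) act on the odd/fiber part through the local matrices of the fiberwise linear action on \(E\), and then verify compatibility with the split transition functions, the group axioms, and invariance of the embedding \(i\colon \Smooth{M}\to\Split E\). The only cosmetic difference is that you phrase the construction in functor-of-points (\(B\)-point) language while the paper writes the coordinate pullback formulas \(a^\# x = x\), \(a^\#\eta = \eta\,g\); these encode the same construction.
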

\begin{proof}
	We use the notation from the proof of Lemma~\ref{lemma:NormalBundleFunctorOfPoints}.
	Assume that the vector bundle \(E\) is trivialized over a coordinate atlas \(\Set{V_i}\) of \(\Smooth{M}\).
	Denote the coordinates on \(V_i\) by \(y_i^a\) and the real framing of \(E\) over \(V_i\) by \(e_{i,\alpha}\).
	If \(V_i\cap V_j\) is non-empty we have coordinate change functions \(f_{ij}\) and a linear map \(F_{ij}=F_{ij}(x_i^a)\) between the framings
	\begin{align}
		x_j^a &= f_{ij}^a(x_i), &
		e_{j,\alpha} &= \tensor[_\alpha]{F}{_j_i^\beta}f(x_i^a)e_{i,\beta}.
	\end{align}
	The multiplication \(m_g\colon E\to E\) by \(g\in G\) is given in this trivialization by a matrix-valued function \(\tensor[_\alpha]{g}{_j^\beta} = \tensor[_\alpha]{g}{_j^\beta}(x_j^a)\) such that
	\begin{align}
		m_g(e_{j,\alpha}) &= \tensor[_\alpha]{g}{_j^\beta} e_{j,\beta}, &
		m_{\tilde{g}}(m_g(e_{j,\alpha})) &= \tensor[_\alpha]{g}{_j^\beta}\tensor[_\beta]{\tilde{g}}{_j^\gamma} e_{j,\gamma}. &
	\end{align}

	The supermanifold \(\Split E\) then has an atlas \(\Set{U_i}\) with local coordinates \((x_i^a, \eta_i^\alpha)\) and coordinate changes
	\begin{align}
		x_j^a &= f_{ij}^a(x_i), &
		\eta_j^\alpha &= \eta_i^\beta \tensor[_\beta]{F}{_i_j^\alpha}(x_i).
	\end{align}

	We define the \(G\)-action \(a\colon G\times M\to M\) in those local coordinates by
	\begin{align}
		a^\# x_j^a &= x_j^a, &
		a^\# \eta_j^\alpha &= \eta_i^\beta\tensor[_\beta]{g}{_j^\alpha}.
	\end{align}
	The map \(a\) defines a \(G\)-action because the maps \(m_g\) and their local expressions \(\tensor[_\alpha]{g}{_j^\beta}\) satisfy the properties of an action.
	The transformation behavior of \(\tensor[_\alpha]{g}{_j^\beta}\) under a change of frame assures that the action \(a\) is independent of the coordinates on \(\Split E\).
\end{proof}
\begin{ex}
	The group \(\Z_2\) acts on any vector bundle by sending the fiber to its negative.
	We obtain a corresponding \(\Z_2\)-action on \(\Split E\) which sends the odd coordinates to its negative.
\end{ex}
\begin{ex}[Torus action on complex vector bundles]
	Let \(E\) be a complex vector bundle over the manifold \(\Smooth{M}\).
	For any \(n\in \Z\) we can let the torus \(\C^*\) act on \(E\) via multiplication by \(t^n\).
	This leads for each \(n\in \Z\) to a smooth action of \(\C^*\) on \(\Split E\).

	If, moreover, the manifold \(\Smooth{M}\) is a complex manifold and \(E\to \Smooth{M}\) a holomorphic vector bundle the multiplication of the fibers by \(t^n\) yields a holomorphic torus action on the holomorphically split supermanifold \(\Split E\).
\end{ex}
\begin{ex}[{Super Riemann surface of genus zero~\(\ProjectiveSpace[\C]{1|1}\)}]\label{ex:TorusActionOnP11}
	Recall from Example~\ref{ex:P11isSplit} that \(\ProjectiveSpace[C]{1|1}=\Split \cO(1)\).
	We consider the holomorphic action of \(\C^*\) with coordinate \(t\) that rescales the fiber of \(\cO(1)\) by \(t\).
	The resulting torus action \(a\colon \C^* \times \ProjectiveSpace[\C]{1|1}\to \ProjectiveSpace[\C]{1|1}\) is given in superconformal coordinates by
	\begin{align}
		a^\# z_1 &= z_1, &
		a^\# \theta_1 &= t \theta_1, &
		a^\# z_2 &= z_2, &
		a^\# \theta_2 &= t \theta_2.
	\end{align}
	Alternatively, in superprojective coordinates the torus action is given by
	\begin{equation}
		[Z_1 : Z_2 : \Theta] \mapsto [Z_1 : Z_2 : t\Theta].
	\end{equation}
	Note that the torus action is not superconformal, that is, in general, the action does not preserve the distribution \(\cD\).
\end{ex}
\begin{ex}[{Moduli space of super \(\targetACI\)-holomorphic curves}]\label{ex:TorusActionOnModuliSpace}
	Assume, that the conditions of Theorem~\ref{thm:ModuliSpaceIsSplit} are satisfied and hence we have a smooth split structure \(\mathcal{M}(A)=\Split N_A\) for the moduli space of super \(\targetACI\)-holomorphic curves \(\ProjectiveSpace[\C]{1|1}\to N\).
	By Proposition~\ref{prop:AConDefObs}, the normal bundle~\(N_A\) carries an almost complex structure and can hence be equipped with a torus action which multiplies fibers with \(t\in \C^*\).
	This action is not holomorphic because the splitting \(\mathcal{M}(A)=\Split N_A\) is not holomorphic, see Remark~\ref{rem:Integrability}.

	The resulting torus action on \(\mathcal{M}(A)\) can be described as follows:
	Let \(\Phi\in \underline{\mathcal{M}(A)}(B)\) be a \(B\)-point.
If the super \(\targetACI\)-holomorphic curve \(\Phi\colon \ProjectiveSpace[\C]{1|1}\times B\to N\) has component fields \((\varphi, \psi)\) the torus action yields \((\varphi, t\psi)\), where \(t\psi\) is the complex multiplication with the complex number \(t\) on \(H^0\left(\dual{S}\otimes\varphi^*\tangent{N}\right)\otimes {\left(\cO_B\right)}_1\).
\end{ex}
\begin{rem}
	Proposition~\ref{prop:TorusActionSplitness} directly generalizes to families of supermanifolds which are relatively split.
\end{rem}

\subsection{Torus action on \texorpdfstring{\(\mathcal{M}_{0,k}\)}{M0k}}\label{SSec:TorusActionM0k}
In this section we want to construct a \(\C^*\)-action on the moduli space \(\mathcal{M}_{0,k}\) of \(k\geq 3\) points on \(\ProjectiveSpace[\C]{1|1}\).
We will see that the \(\C^*\)-action on \(\ProjectiveSpace[\C]{1|1}\) constructed in Example~\ref{ex:TorusActionOnP11} does not canonically induce an action on \(\mathcal{M}_{0,k}\).
On the other hand, we will show that the normal bundle \(N_k\) of the embedding \(M_{0,k}\to \mathcal{M}_{0,k}\) of the moduli space of \(k\) points in \(\ProjectiveSpace[\C]{1}\) into \(\mathcal{M}_{0,k}\) carries a canonical \(\C^*\)-action.

The moduli space \(\mathcal{M}_{0,k}\) for \(k\geq 3\) is defined in~\cite[Section~3.2]{KSY-SQCI} as \(\faktor{Z_k}{\SCP}\), where \(Z_k\subset {\left(\ProjectiveSpace[\C]{1|1}\right)}^k\) is the open subsupermanifold such that no two of the reduced points coincide.
Here \(\faktor{Z_k}{\SCP}\) is the superorbifold given by the Morita equivalence class of the super Lie groupoid \(\SCP\ltimes Z_k\).
We do not need many technical details of superorbifolds in this paper but those can be found in~\cite[Chapter~2]{KSY-SQCI}.
It is important to know that superorbifolds \(\mathcal{M}\) have an orbit functor
\begin{equation}
	\underline{\mathcal{M}}\colon \cat{SPoint^{op}}\to \cat{Top}
\end{equation}
sending a superpoint to the corresponding quotient topological space.
In particular for quotient superorbifolds such as \(\mathcal{M}_{0,k}\), we have
\begin{equation}
	\underline{\mathcal{M}_{0,k}}(B) = \faktor{\underline{Z_k}(B)}{\underline{\SCP}(B)}
\end{equation}
where
\begin{align}
	\underline{Z_k}\colon \cat{SPoint^{op}}&\to \cat{Man} &
	\underline{\SCP}\colon \cat{SPoint^{op}}\to \cat{Man}
\end{align}
are the point functors in the sense of Molotkov--Sachse of \(Z_k\) and \(\SCP\) respectively.
Both are functors from the category of superpoints to the category of manifolds.

The supermanifold \(Z_k\subset{\left(\ProjectiveSpace[\C]{1|1}\right)}^k\) is holomorphically split, and hence by Proposition~\ref{prop:TorusActionSplitness} obtains a \(\C^*\)-action \(a_{Z_k}\colon \C^*\times Z_k\to Z_k\) from rescaling the fibers of the normal bundle by \(\C^*\).
Unfortunately, the action \(a_k\) on \(Z_k\) does not descend to a \(\C^*\)-action \(a_{\mathcal{M}_{0,k}}\) on \(\mathcal{M}_{0,k}\):
\begin{diag}
	\matrix[mat](m){
		\C^*\times Z_k & Z_k \\
		\C^*\times \mathcal{M}_{0,k} & \mathcal{M}_{0,k}=\faktor{Z_k}{\SCP} \\
	} ;
	\path[pf]{
		(m-1-1) edge node[auto]{\(a_{Z_k}\)} (m-1-2)
			edge  (m-2-1)
		(m-1-2) edge (m-2-2)
		(m-2-1) edge[dotted] node[auto]{\(a_{\mathcal{M}_{0,k}}\)}(m-2-2)
	};
\end{diag}
The action \(a_{\mathcal{M}_{0,k}}\) cannot be constructed in this way because in general for \(z\in \underline{Z_k}(B)\) and \(g\in \underline{\SCP}(B)\) there is no \(\tilde{g}\in\underline{\SCP}(B)\) such that
\begin{equation}
	\underline{a_{Z_k}}(z\cdot g) = \underline{a_{Z_k}}(z)\cdot \tilde{g}
\end{equation}
as can be seen in the following counterexample:
\begin{ex}
	Consider the three points \(0=[0:1:0]\), \(1_\epsilon=[1:1:\epsilon]\) and \(\infty=[1:0:0]\) of \(\ProjectiveSpace[\C]{1|1}\) and the automorphism \(\Xi\colon \ProjectiveSpace[\C]{1|1}\to \ProjectiveSpace[\C]{1|1}\) induced from the following matrix:
	\begin{equation}
		\begin{pmatrix}
			1  & 0 & -\beta \\
			0 & 1 & 0 \\
			0 & \beta & 1
		\end{pmatrix}
	\end{equation}
	The image is given by the following matrix product:
	\begin{equation}
		\begin{bmatrix}
			0 & 1 & 0 \\
			1 & 1 & \epsilon \\
			1 & 0 & 0 \\
		\end{bmatrix}
		\begin{pmatrix}
			1  & 0 & -\beta \\
			0 & 1 & 0 \\
			0 & \beta & 1
		\end{pmatrix}
		=
		\begin{bmatrix}
			0 & 1 & 0 \\
			1 & 1 + \beta\epsilon & -\beta + \epsilon \\
			1 & 0 & -\beta \\
		\end{bmatrix}
	\end{equation}
	The torus action on the resulting three points yields
	\begin{equation}
		\begin{bmatrix}
			0 & 1 & 0 \\
			1 & 1 + \beta\epsilon & t\left(\epsilon-\beta\right) \\
			1 & 0 & -t\beta \\
		\end{bmatrix}.
	\end{equation}
	But there is no automorphism sending \(0\), \(1_{t\epsilon}\) and \(\infty\) to those three points.
\end{ex}

In order to construct a torus action on \(\mathcal{M}_{0,k}\) we give an equivalent description of \(\mathcal{M}_{0,k}\) as \(\faktor{\C^{0|1}\times\tilde{Z}_{k-3}}{\Z_2}\) for a \(\C^*\)-invariant \(\C^{0|1}\times\tilde{Z}_{k-3}\subset Z_k\) and construct the action \(a_{\mathcal{M}_{0,k}}\) by the requirement that the following diagram commutes:
\begin{diag}
	\matrix[mat](m){
		\C^*\times \left(\C^{0|1}\times\tilde{Z}_{k-3}\right) & \C^*\times Z_k & \C^{0|1}\times\tilde{Z}_{k-3} \\
		\C^*\times \mathcal{M}_{0,k} && \mathcal{M}_{0,k}=\faktor{\C^{0|1}\times\tilde{Z}_{k-3}}{\Z_2} \\
	} ;
	\path[pf]{
		(m-1-1) edge[inj] (m-1-2)
			edge  (m-2-1)
		(m-1-2) edge node[auto]{\(a_{Z_k}\)} (m-1-3)
		(m-1-3) edge (m-2-3)
		(m-2-1) edge node[auto]{\(a_{\mathcal{M}_{0,k}}\)}(m-2-3)
	};
\end{diag}
More precisely, in order to avoid the concept of morphism of superorbifolds, we will just prove that the action \(a_{Z_k}\) leaves \(\tilde{Z}_{k-3}\) invariant and commutes with the \(\Z_2\)-action.

Let \(\tilde{Z}_{k-3}\subset Z_{k-3}\) be the open subset such that none of the reduced points coincides with \(0\), \(1\) or \(\infty\).
We define a \(\Z_2\)-action on \(\C^{0|1}\times \tilde{Z}_{k-3}\) by
\begin{equation}
	(\epsilon, (z_1, \dotsc, z_{k-3})) \mapsto (-\epsilon, (\Xi_- z_1, \dotsc, \Xi_- z_{k-3})),
\end{equation}
and an embedding
\begin{equation}\label{eq:EquivariantMapZk-3}
	\begin{split}
		f\colon \C^{0|1}\times \tilde{Z}_{k-3} &\to Z_k \\
		(\epsilon, (z_1, \dotsc, z_{k-3})) &\mapsto (0, 1_\epsilon, \infty, z_1, \dotsc, z_{k-3})
	\end{split}
\end{equation}
For \(\gamma\colon \Z_2\to \SCP\) given by \(\gamma(1)=\Xi_-\), the map \(f\) is \(\gamma\)-equivariant and hence induces a groupoid homomorphism from \(F\colon\Z_2\ltimes \left(\C^{0|1}\times \tilde{Z}_{k-3}\right)\to \SCP\ltimes Z_k\).
By the following lemma, \(F\) is a Morita equivalence:
\begin{lemma}\label{lemma:MoritaEquivalence}
	Let \(a_M\colon G\times M\to M\) and \(a_{M'}\colon G'\times M'\to M\) be actions of the super Lie groups \(G\), \(G'\) on \(M\) and \(M'\) respectively.
	Assume that \(\gamma\colon G'\to G\) is a homomorphism of super Lie groups and \(f\colon M'\to M\) is \(\gamma\)-equivariant, that is, \(a_M\circ\left(\gamma\times f\right)= f\circ a_{M'}\).
	Then \(f\) induces a homomorphism of the super Lie groupoids \(F\colon G'\ltimes M'\to G\ltimes M\).
	\(F\) is a Morita equivalence if and only if
	\begin{enumerate}
		\item\label{item:MoritaEquivalence:OrbitSurjectivity}
			The composition
			\begin{diag}
				\matrix[mat](m) {
					G\times M' & G\times M & M \\
				} ;
				\path[pf] {
					(m-1-1) edge node[auto]{\(\id_G\times f\)} (m-1-2)
					(m-1-2) edge node[auto]{\(a_M\)} (m-1-3)
				};
			\end{diag}
			is a surjective submersion.
		\item\label{item:MoritaEquivalence:Restriction}
			The commutative square
			\begin{diag}
				\matrix[mat](m){
					G'\times M' & G\times M\\
					M'\times M' & M\times M\\
				} ;
				\path[pf]{
					(m-1-1) edge node[auto]{\(\gamma\times f\)} (m-1-2)
						edge node[auto]{\((p_{M'},a_{M'})\)} (m-2-1)
					(m-1-2) edge node[auto]{\((p_M,a_M)\)}(m-2-2)
					(m-2-1) edge node[auto]{\(f\times f\)}(m-2-2)
				};
			\end{diag}
			is a fiber product square, that is, the induced map
			\begin{equation}
				G'\times M' \to \left(M'\times M'\right)\times_{f\times f, (p_M, a_M)} \left(G\times M\right)
			\end{equation}
			is a superdiffeomorphism.
	\end{enumerate}
	In particular, it follows that if \(G\ltimes M\) or \(G'\ltimes M'\) is a superorbifold so is the other and
	\begin{equation}
		\faktor{M}{G}=\faktor{M'}{G'}.
	\end{equation}
\end{lemma}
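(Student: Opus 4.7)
The plan is to apply the standard characterization of Morita equivalence for (super) Lie groupoid homomorphisms, namely that $F$ is a Morita equivalence if and only if it is essentially surjective and fully faithful in the appropriate supersmooth sense, and then to specialize these two criteria to the case of action groupoids. Conditions (i) and (ii) should emerge as direct translations of these two properties.

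First I would verify that $F=(\gamma,f)$ really is a well-defined super Lie groupoid homomorphism from $G'\ltimes M'$ to $G\ltimes M$: compatibility with the source maps follows from the commuting projections, compatibility with the target maps is exactly the $\gamma$-equivariance hypothesis $a_M\circ(\gamma\times f)=f\circ a_{M'}$, and compatibility with composition follows from $\gamma$ being a group homomorphism. For condition~(i), note that in the action groupoid $G\ltimes M$ the source map is the projection to $M$, so the fiber product of source with object map $(G\times M)\times_{s,M,f}M'$ is canonically $G\times M'$, and postcomposition with the target $a_M$ yields the map $(g,m')\mapsto g\cdot f(m')$. Asking this to be a surjective submersion is precisely the standard supersmooth formulation of essential surjectivity for action groupoids.

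For condition~(ii), fully faithfulness says that for each pair $(m'_1,m'_2)$ of points of $M'$ the arrows from $m'_1$ to $m'_2$ in $G'\ltimes M'$ correspond supersmoothly to the arrows from $f(m'_1)$ to $f(m'_2)$ in $G\ltimes M$. Globally this is the assertion that the square in~(ii) is a fiber product square: given a pair $(m'_1,m'_2)$ together with an arrow $(g,m)\colon f(m'_1)\to f(m'_2)$, the unique lift is the element $g'\in G'$ satisfying $\gamma(g')=g$ and $g'\cdot m'_1=m'_2$. The \emph{in particular} statement then follows from Morita invariance: the topological orbit space is preserved by Morita equivalence, so $\faktor{M}{G}\cong\faktor{M'}{G'}$, and being a superorbifold, as defined via atlases of proper étale charts in~\cite[Chapter~2]{KSY-SQCI}, is a Morita-invariant property.

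I expect the main obstacle to be the careful treatment of these criteria in the super setting: the bijection on arrows in~(ii) must be enhanced to a superdiffeomorphism, so using the Molotkov--Sachse functor of points the fiber product condition has to be verified functorially in the superpoint $B$, which is how the statement is already phrased. One also needs to confirm that the equivalence between ``Morita equivalence'' and ``essentially surjective plus fully faithful'' carries over to super Lie groupoids, a point which can be traced back to the general theory of quotient super Lie groupoids referenced in the proof of Lemma~\ref{lemma:Sp21QuotientBySL2}.
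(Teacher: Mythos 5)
The paper itself gives no proof of this lemma: it is stated bare, followed only by a colloquial gloss of the two conditions, and is implicitly treated as an unpacking of the definition of Morita equivalence for super Lie groupoids given in~\cite[Chapter~2]{KSY-SQCI}. Your proposal supplies exactly the argument the authors leave implicit, and it is the right one: for the action groupoid \(G\ltimes M\) the source map is the projection \(p_M\) and the target is \(a_M\), so the fiber product of the source with the object map \(f\) is canonically \(G\times M'\), and condition~(i) is then precisely essential surjectivity, while condition~(ii) is precisely fully faithfulness of \(F\) (the pullback square for the arrow spaces over \((s,t)=(p_M,a_M)\)); the \emph{in particular} clause follows from Morita invariance of the orbit functor and of the superorbifold property. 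Two caveats on where the real content sits. First, whether your argument is a definition-chase or needs a genuine theorem depends on which definition of Morita equivalence~\cite{KSY-SQCI} adopts: if Morita equivalence of homomorphisms is \emph{defined} as essentially surjective plus fully faithful (weak equivalence), you are done once the structure maps of the action groupoids are identified; if it is defined via bibundles or zigzags of equivalences, you must invoke the equivalence of the two notions in the super setting, which you flag but do not prove. Second, the invariance statements at the end should be checked functorially in the superpoint \(B\), i.e.\ that a Morita equivalence induces homeomorphisms
\begin{equation}
	\faktor{\underline{M}(B)}{\underline{G}(B)} \cong \faktor{\underline{M'}(B)}{\underline{G'}(B)}
\end{equation}
compatible with the maps induced by morphisms of superpoints; this is how the paper uses the lemma (e.g.\ in Proposition~\ref{prop:TorusActionM0k}), and it is slightly stronger than preservation of a single topological orbit space. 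Neither caveat is a gap in the sense of a wrong step, but both are points where a complete write-up would need to anchor itself to the precise definitions in the cited reference.
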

The two conditions in the Lemma can be stated colloquially as follows:
The condition~\ref{item:MoritaEquivalence:OrbitSurjectivity} assures that every orbit of \(a_M\) is reached by \(a_{M'}\), whereas the condition~\ref{item:MoritaEquivalence:Restriction} implies that \(a_M\) and \(a_M'\) coincide on the image of \(f\).
The two conditions are fulfilled in particular if \(M'\) is a subsupermanifold going through all orbits of \(G\) and \(G'\) is the restriction of \(G\) to the super Lie subgroup that preserves \(M'\).

Applying Lemma~\ref{lemma:MoritaEquivalence} to the map \(f\colon \C^{0|1}\times \tilde{Z}_{k-3}\to Z_k\), see Equation~\eqref{eq:EquivariantMapZk-3}, we obtain
\begin{prop}\label{prop:TorusActionM0k}
	The moduli space of super Riemann surfaces of genus zero with \(k\) marked points is isomorphic to
	\begin{equation}\label{eq:M0k}
		\mathcal{M}_{0,k}
		= \faktor{Z_k}{\SCP}
		= \faktor{\C^{0|1}\times\tilde{Z}_{k-3}}{\Z_2}.
	\end{equation}
	The torus action on \(\C^{0|1}\times \tilde{Z}_{k-3}\) commutes with \(\Z_2\) and hence induces a torus action \(\underline{a_{\mathcal{M}_{0,k}}}\colon \underline{\C^*}\times \underline{\mathcal{M}_{0,k}}\to \underline{\mathcal{M}_{0,k}}\).
\end{prop}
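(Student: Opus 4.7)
The plan is to apply Lemma~\ref{lemma:MoritaEquivalence} to the map $f$ of~\eqref{eq:EquivariantMapZk-3} together with the homomorphism $\gamma\colon \Z_2\to \SCP$ defined by $\gamma(1)=\Xi_-$, yielding the second isomorphism in~\eqref{eq:M0k}, and then to check that the rescaling action $a_{Z_k}$ of Example~\ref{ex:TorusActionOnP11} restricts to an action on $\C^{0|1}\times\tilde{Z}_{k-3}$ that commutes with the $\Z_2$-action, so that it descends to the quotient.

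First I would verify $\gamma$-equivariance of $f$: by Example~\ref{ex:ReflectionOfTheOddDirection}, $\Xi_-$ fixes $0=[0:1:0]$ and $\infty=[1:0:0]$ and sends $1_\epsilon=[1:1:\epsilon]$ to $1_{-\epsilon}$, while on the remaining factors it acts as the odd reflection used to define the $\Z_2$-action on $\C^{0|1}\times\tilde{Z}_{k-3}$. Hence $F\colon \Z_2\ltimes(\C^{0|1}\times\tilde{Z}_{k-3})\to\SCP\ltimes Z_k$ is a groupoid morphism. To verify condition~\ref{item:MoritaEquivalence:OrbitSurjectivity} of Lemma~\ref{lemma:MoritaEquivalence}, I would invoke the triple transitivity recalled in the paragraph preceding Lemma~\ref{lemma:Sp21QuotientBySL2}: for any three distinct $B$-points of $\ProjectiveSpace[\C]{1|1}$ there is an odd parameter $\epsilon$, unique up to sign, and a unique element of $\underline{\SCP}(B)$ sending the triple to $(0,1_\epsilon,\infty)$. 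Applied to the first three coordinates of a $B$-point of $Z_k$, this gives surjectivity of the orbit map, while submersivity follows from the complex-dimension count $3|2+(k-3)|(k-2)=k|k=\dim_\C Z_k$. For condition~\ref{item:MoritaEquivalence:Restriction}, the uniqueness-up-to-sign clause implies that any $g\in\underline{\SCP}(B)$ carrying a point in the image of $f$ to another such point must preserve the triple $(0,1_\epsilon,\infty)$ up to the relabeling $\epsilon\mapsto-\epsilon$, and hence lies in $\gamma(\Z_2)$; this identifies the fiber product on the nose.

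For the torus action, $a_{Z_k}$ acts on each factor by $[Z_1:Z_2:\Theta]\mapsto[Z_1:Z_2:t\Theta]$, so it fixes $0$ and $\infty$, sends $1_\epsilon$ to $1_{t\epsilon}$, and thus preserves the image of $f$; its restriction to $\C^{0|1}\times\tilde{Z}_{k-3}$ agrees with the torus action obtained via Proposition~\ref{prop:TorusActionSplitness} from the splittings $\ProjectiveSpace[\C]{1|1}=\Split\cO(1)$ of the factors. Since both $a_{Z_k}$ and $\Xi_-$ act fiberwise by scalar multiplication on $\cO(1)$ (by $t$ and by $-1$ respectively), they commute; the torus action therefore commutes with the $\Z_2$-action and descends to the claimed $\underline{a_{\mathcal{M}_{0,k}}}$.

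The main obstacle I anticipate is the rigorous verification of condition~\ref{item:MoritaEquivalence:Restriction}, namely that the stabilizer in $\SCP$ of $(0,1_\epsilon,\infty)$ equals $\{\id,\Xi_-\}$. This requires an explicit computation using the decomposition $\SCP=\SGL_\C(2)\cdot\SUSY$ from Lemma~\ref{lemma:Sp21QuotientBySL2} and the coordinate formulas~\eqref{eq:SCCoordinateExpressionLiftOfMöbius} and~\eqref{eq:SCCoordinateExpressionSUSY}: fixing $0$ and $\infty$ in the classical part forces the off-diagonal entries of the $\SGL_\C(2)$-factor to vanish, and subsequently the conditions to fix $1_\epsilon$ and to preserve the distribution pin the remaining parameters down to the sign ambiguity realized by $\Xi_-$.
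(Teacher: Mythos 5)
Your proposal is correct and follows essentially the same route as the paper: the paper likewise obtains the isomorphism by applying Lemma~\ref{lemma:MoritaEquivalence} to the $\gamma$-equivariant map $f$ of Equation~\eqref{eq:EquivariantMapZk-3} with $\gamma(1)=\Xi_-$, relying on the triple transitivity of $\SCP$ (unique automorphism with $\epsilon$ determined up to sign) for the two Morita conditions, and then notes that the fiberwise rescaling action preserves the image of $f$ and commutes with the $\Z_2$-action. The only difference is that you spell out the verifications (equivariance, dimension count, stabilizer $\{\id,\Xi_-\}$) that the paper leaves implicit in its citation of the transitivity result.
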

Notice that the only fixed points of the torus action on the moduli space, in the sense of functor of points, are the \(\R^{0|0}\)-points \(\underline{\mathcal{M}_{0,k}}(\R^{0|0})\).
The space \(\underline{\mathcal{M}_{0,k}}(\R^{0|0})\) coincides as a manifold with \(M_{0,k}\), the Knudsen moduli space of \(k\geq 3\) marked points on \(\ProjectiveSpace[\C]{1}\).
The Proposition~\ref{prop:TorusActionM0k} gives explicit identifications
\begin{equation}
	M_{0,k}
	=\faktor{\underline{Z_k}(\R^{0|0})}{\underline{\SCP}(\R^{0|0})}
	=\faktor{\Red{\left(Z_k\right)}}{\SGL_\C(2)}
	=\Red{\left(\tilde{Z}_{k-3}\right)}
	=\underline{\tilde{Z}_{k-3}}(\R^{0|0}).
\end{equation}
Furthermore, \(M_{0,k}=\Red{\left(\tilde{Z}_{k-3}\right)}\) carries the complex normal bundle
\begin{equation}
	N_{k}=N_{\C^{0|1}\times \tilde{Z}_{k-3}/\Red{\left(\tilde{Z}_{k-3}\right)}}
\end{equation}
together with a fiberwise \(\Z_2\)-action and a fiberwise \(\C^*\)-action \(a_{N_{k}}\colon \C^*\times N_{k}\to N_{k}\) commuting with the \(\Z_2\)-action such that
\begin{itemize}
	\item
		\(\underline{\mathcal{M}_{0,k}}(\R^{0|1})=\faktor{N_k}{\Z_2}\) and the projection \(\faktor{N_k}{\Z_2}\to M_{0,k}\) coincides with the map \(\underline{\mathcal{M}_{0,k}}(\R^{0|0}\to \R^{0|1})\) and
	\item
		the \(\C^*\)-action \(a_{N_k}\) coincides with \(\underline{a_{\mathcal{M}_{0,k}}}(\R^{0|1})\) up to sign.
\end{itemize}
However, the construction of \(N_{k}\) and \(a_{N_{k}}\) above uses the choice of identification \(\mathcal{M}_{0,k}=\faktor{\C^{0|1}\times\tilde{Z}_{k-3}}{\Z_2}\).
We will now give an alternative construction of \(N_{k}\) and \(a_{N_{k}}\) which is canonical.

Consider the following short exact sequence of vector bundles over \(\Red{\left(Z_k\right)}\):
\begin{diag}\label{seq:NormalbundleZ_k}
	\matrix[mat](m) {
		0 & \Red{\left(Z_k\right)}\times {\left(H^0(S)\otimes \cO_{\R^{0|1}}\right)}_0 & N_{Z_k/\Red{\left(Z_k\right)}} & Q & 0 \\
	} ;
	\path[pf] {
		(m-1-1) edge (m-1-2)
		(m-1-2) edge node[auto]{\(\mathfrak{SUSY}^1\)} (m-1-3)
		(m-1-3) edge (m-1-4)
		(m-1-4) edge (m-1-5)
	};
\end{diag}
The vector bundle \(N_{Z_k/\Red{\left(Z_k\right)}}\) is the restriction of \(k\) copies of \(S=\cO(1)\to \ProjectiveSpace[\C]{1}\) to \(\Red{\left(Z_k\right)}={\left(\ProjectiveSpace[\C]{1}\right)}^k\).
More precisely, let \(\Smooth{p_i}\colon \Red{\left(Z_k\right)}\to \ProjectiveSpace[\C]{1}\) be the projection on the \(i\)-th factor.
Then
\begin{equation}
	N_{Z_k/\Red{\left(Z_k\right)}}
	= \bigoplus_{i=1}^k\Smooth{p_i}^*S,
\end{equation}
and the maps \(\mathfrak{SUSY}^1\) sends constant sections \(s\) to \(\bigoplus \Smooth{p}_i^*s\), compare Equation~\eqref{eq:SUSYR01Points}.

We equip the vector bundles of the short exact sequence~\eqref{seq:NormalbundleZ_k} with the \(\C^*\)-action which rescales fiberwisely by \(t\in \C^*\).
The linear maps of the short exact sequence~\eqref{seq:NormalbundleZ_k} are equivariant.

The group \(\SGL_\C(2)\) acts on the short exact sequence~\eqref{seq:NormalbundleZ_k} in an equivariant way:
For an element \(g\in\SGL_\C(2)\) we denote the corresponding Möbius transformation by \(\xi\) and the automorphism of \(\ProjectiveSpace[\C]{1|1}\) by \(\Xi\).
As explained in Equation~\eqref{eq:PGLActionOnH0S}, the group element \(g\) acts on \(\Red{\left(Z_k\right)}\times {\left(H^0(S)\otimes \cO_{\R^{0|1}}\right)}_0\) via
\begin{equation}
	(\Smooth{p_i}, s) \mapsto (\xi\circ\Smooth{p_i}, s_\Xi s).
\end{equation}
The action of \(\SGL_\C(2)\) on \(N_{Z_k/\Red{\left(Z_k\right)}}=\bigoplus\Smooth{p_i}^*S\) is given by \(\Smooth{p_i}^*s_\Xi\) on every summand.
Hence the action of \(\mathfrak{SUSY}^1\) is \(\SGL_\C(2)\) equivariant.
Moreover, the action of \(\SGL_\C(2)\) on the short exact sequence~\eqref{seq:NormalbundleZ_k} commutes with the torus action.

Consequently, we obtain a sequence of bundle maps
\begin{diag}\label{SES:Nk}
	\matrix[mat, row sep=huge](m) {
		\faktor{\Red{\left(Z_k\right)}\times {\left(H^0(S)\otimes \cO_{\R^{0|1}}\right)}_0}{\SGL_\C(2)} & \faktor{N_{Z_k/\Red{\left(Z_k\right)}}}{\SGL_{\C}(2)} & \faktor{Q}{\SGL_\C(2)}  \\
	} ;
	\path[pf] {
		(m-1-1) edge node[auto]{\(\widetilde{\mathfrak{SUSY}^1}\)} (m-1-2)
		(m-1-2) edge (m-1-3)
	};
\end{diag}
over \(M_{0,k}=\faktor{\Red{\left(Z_k\right)}}{\SGL_\C(2)}\) together with a \(\C^*\)-action on them.
The fibers of the bundles are quotients of vector bundles by the \(\Z_2\)-action sending a vector to its negative.
The map \(\widetilde{\mathfrak{SUSY}^1}\) is injective and the map to \(\faktor{Q}{\SGL_{\C}(2)}\) is surjective; both are linear up to sign.

The bundle~\(\tilde{Q}=\faktor{Q}{\SGL_\C(2)}\) is \(\C^*\)-equivariantly isomorphic to the bundle~\(\faktor{N_{k}}{\Z_2}\) obtained earlier because the vector bundle \(Q\) is isomorphic as a manifold to \(\faktor{\underline{Z_k}(\R^{0|1})}{\underline{\SUSY}(\R^{0|1})}\) and
\begin{equation}
	\begin{split}
		\tilde{Q}
		&= \faktor{\left(\faktor{\underline{Z_k}(\R^{0|1})}{\underline{\SUSY}(\R^{0|1})}\right)}{\SGL_\C(2)}
		= \faktor{\underline{Z_k}(\R^{0|1})}{\underline{\SCP}(\R^{0|1})}
		= \underline{\mathcal{M}_{0,k}}(\R^{0|1}).
	\end{split}
\end{equation}

\subsection{Torus action on \texorpdfstring{\(\mathcal{M}_{0,k}(A)\)}{M0k(A)}}\label{SSec:TorusActionM0kA}
In this section we want to construct a torus action on the moduli space of super stable maps with \(k\) marked points
\begin{equation}
	\mathcal{M}_{0,k}(A) = \faktor{Z_k\times \mathcal{M}(A)}{\SCP}
\end{equation}
for \(k>2\).
As in Section~\ref{SSec:TorusActionM0k} for \(\mathcal{M}_{0,k}\), a torus action on \(\mathcal{M}_{0,k}(A)\) cannot be directly obtained from the torus actions on \(\ProjectiveSpace[\C]{1|1}\) and \(\mathcal{M}(A)\) discussed in Example~\ref{ex:TorusActionOnP11} and~\ref{ex:TorusActionOnModuliSpace} because this torus action does not descend to the quotient.
Instead, like in Proposition~\ref{prop:TorusActionM0k}, we will give a description of \(\mathcal{M}_{0,k}(A)\) as a quotient of a smaller supermanifold by a smaller super Lie group wich allows for a torus action descending to the quotient.
In a second step we will then show that the resulting torus action on the normal bundle~\(N_{k,A}\) is canonical.

\begin{prop}\label{prop:TorusActionM0kA}
	Let \(Z'_{k-2}\) be the open subsupermanifold of \({\left(\ProjectiveSpace[\C]{1|1}\right)}^{k-2}\) such that no two reduced points coincide and none of them coincides with \(0\) or \(\infty\).
	Denote by \(G'\subset\SCP\) the super Lie group of superconformal automorphisms of \(\ProjectiveSpace[\C]{1|1}\) that preserve the points \(0\) and \(\infty\).
	Then
	\begin{equation}
		\mathcal{M}_{0,k}(A)
		= \faktor{Z_k\times \mathcal{M}(A)}{\SCP}
		= \faktor{Z'_{k-2}\times \mathcal{M}(A)}{G'}.
	\end{equation}
	Furthermore, \(Z'_{k-2}\times \mathcal{M}(A)\subset Z_k\times \mathcal{M}(A)\) is a \(\C^*\)-invariant subsupermanifold and the \(\C^*\)-action commutes with the action of \(G'\).
	Hence the torus action on \(Z'_{k-2}\times \mathcal{M}(A)\) induces a \(\C^*\)-action \(\underline{a_{\mathcal{M}_{0,k}(A)}}\colon \underline{\C^*}\times \underline{\mathcal{M}_{0,k}(A)}\to \underline{\mathcal{M}_{0,k}(A)}\) on the orbit functor.
\end{prop}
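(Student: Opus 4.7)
The plan mirrors the argument used for Proposition~\ref{prop:TorusActionM0k}: first I would establish the equivalent description \(\mathcal{M}_{0,k}(A)=\faktor{Z'_{k-2}\times\mathcal{M}(A)}{G'}\) via a Morita equivalence, and only then extract the torus action from this more economical presentation. The Morita equivalence is produced by applying Lemma~\ref{lemma:MoritaEquivalence} to the subgroup inclusion \(\gamma\colon G'\hookrightarrow\SCP\) together with the inclusion
\begin{equation*}
f\colon Z'_{k-2}\times\mathcal{M}(A)\hookrightarrow Z_k\times\mathcal{M}(A),\qquad (q_1,\dotsc,q_{k-2},\Phi)\mapsto (0,\infty,q_1,\dotsc,q_{k-2},\Phi).
\end{equation*}
The \(\gamma\)-equivariance of \(f\) is immediate from the definition of \(G'\) as the stabilizer of the points \(0\) and \(\infty\).

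For the two conditions of Lemma~\ref{lemma:MoritaEquivalence}, condition~(i) amounts to the fact that \(\SCP\) acts transitively on pairs of distinct reduced points of \(\ProjectiveSpace[\C]{1|1}\): given \((p_1,\dotsc,p_k,\Phi)\in\underline{Z_k\times\mathcal{M}(A)}(B)\) one chooses \(g\in\underline{\SCP}(B)\) with \(g\cdot p_1=0\) and \(g\cdot p_2=\infty\), and then \((g^{-1}\cdot p_3,\dotsc,g^{-1}\cdot p_k,g^{-1}\cdot\Phi)\) is a preimage under \(f\) in the same \(\SCP\)-orbit. Condition~(ii) says precisely that \(G'\) coincides with the stabilizer of the pair \((0,\infty)\); this I would verify directly from the explicit action on superprojective coordinates together with the relations~\eqref{eq:Sp21}. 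Fixing \(0\) and \(\infty\) forces \(b=\delta=0\) and \(c=\gamma=0\); feeding this into~\eqref{eq:Sp21} yields in addition \(\alpha=\beta=0\) and \(e=1\), so that every \(B\)-point of \(G'\) is represented uniquely by a diagonal matrix \(\mathrm{diag}(a,a^{-1},1)\) with \(a\in{(\cO_B)}_0^\times\). In particular \(G'\cong\C^*\) as a super Lie group and is purely even.

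Once the alternative description is in place, the \(\C^*\)-invariance of \(Z'_{k-2}\times\mathcal{M}(A)\) inside \(Z_k\times\mathcal{M}(A)\) is clear from Examples~\ref{ex:TorusActionOnP11} and~\ref{ex:TorusActionOnModuliSpace}: the torus action only rescales odd coordinates, so it fixes the reduced structure, fixes \(0\) and \(\infty\) themselves, and hence preserves the open conditions cutting out \(Z'_{k-2}\). Commutativity of the \(\C^*\)-action with the \(G'\)-action on \(\ProjectiveSpace[\C]{1|1}\) is then a direct computation in superconformal charts: the element \(\mathrm{diag}(a,a^{-1},1)\in G'\) sends \((z_1,\theta_1)\) to \((a^2 z_1,a\theta_1)\), which manifestly commutes with \((z_1,\theta_1)\mapsto(z_1,t\theta_1)\). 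On \(\mathcal{M}(A)\) the same principle applies: by Theorem~\ref{thm:ModuliSpaceIsSplit} we have \(\mathcal{M}(A)=\Split N_A\), and both the \(G'\)-action (through the \(\SGL_\C(2)\) cocycle of Example~\ref{ex:LiftOfMöbiusTransformations}) and the \(\C^*\)-action (through the canonical fiber scaling of Example~\ref{ex:TorusActionOnModuliSpace}) are lifts along this split structure of commuting fiberwise linear actions on \(N_A\). Because the \(\C^*\)- and \(G'\)-actions commute on \(\underline{Z'_{k-2}\times\mathcal{M}(A)}(B)\) for every \(B\), the torus action descends to a natural transformation \(\underline{a_{\mathcal{M}_{0,k}(A)}}\colon\underline{\C^*}\times\underline{\mathcal{M}_{0,k}(A)}\to\underline{\mathcal{M}_{0,k}(A)}\).

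The main obstacle I anticipate is the verification of Morita condition~(ii), which rests on the identification of \(G'\) as the purely even diagonal torus; once this is in hand, the remaining assertions reduce to the split structures on \(\ProjectiveSpace[\C]{1|1}\) and \(\mathcal{M}(A)\) and to the observation that the \(\C^*\)-action and the \(G'\)-action scale disjoint pieces of the corresponding normal bundles.
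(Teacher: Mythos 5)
Your proposal is correct and follows essentially the same route as the paper: the same Morita equivalence via Lemma~\ref{lemma:MoritaEquivalence} with the embedding \((z_1,\dotsc,z_{k-2},\Phi)\mapsto(0,\infty,z_1,\dotsc,z_{k-2},\Phi)\), the same identification of \(G'\) as the diagonal matrices \(\mathrm{diag}(a,a^{-1},1)\), and the same appeal to Examples~\ref{ex:LiftOfMöbiusTransformations} and~\ref{ex:ReflectionOfTheOddDirection} for commutativity of the \(G'\)- and \(\C^*\)-actions. You in fact supply more detail than the paper (the explicit stabilizer computation from~\eqref{eq:Sp21} and the coordinate check \((z_1,\theta_1)\mapsto(a^2z_1,a\theta_1)\)); the only blemish is a harmless inverse slip where the preimage under \(f\) should be \((g\cdot p_3,\dotsc,g\cdot p_k,\Phi\circ g^{-1})\) rather than involving \(g^{-1}\cdot p_i\).
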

\begin{proof}
	Let
	\begin{equation}
		\begin{split}
			f\colon Z'_{k-1}\times\mathcal{M}(A) &\to Z_k\times \mathcal{M}(A)\\
			(z_1, \dotsc, z_{k-2}, \Phi) &\mapsto (0, \infty, z_1, \dotsc, z_{k-2}, \Phi)
		\end{split}
	\end{equation}
	and \(\gamma\colon G'\to \SCP\) the inclusion.
	Then \(f\) is \(\gamma\)-equivariant and we use Lemma~\ref{lemma:MoritaEquivalence} to obtain the claimed equality because any two \(B\)-points \((p_1, p_2)\) of \(\ProjectiveSpace[\C]{1|1}\) can be mapped to \((0,\infty)\) by an element of \(\SCP\).

	To see that the action of \(G'\) commutes with the torus action on \(Z'_{k-2}\times\mathcal{M}(A)\) notice that \(G'\) is given by
	\begin{equation}
		\underline{G}(B)=
		\Set{
			\begin{pmatrix}
				a & 0 & 0 \\
				0 & a^{-1} & 0\\
				0 & 0 & 1 \\
			\end{pmatrix}
			\in \underline{\SCP}(B)
		}.
	\end{equation}
	All elements of \(G'\) are lifts of elements of \(\SGL_\C(2)\) to \(\SCP\) and hence the action of \(G'\) on \(Z'_{k-2}\times \mathcal{M}(A)\) is described in Example~\ref{ex:LiftOfMöbiusTransformations} and Example~\ref{ex:ReflectionOfTheOddDirection}.
	With the formulas there one sees that the \(\C^*\)-action commutes with the action of \(G'\).
\end{proof}

The moduli space \(\mathcal{M}_{0,k}(A)\) is a superorbifold where precisely the \(\R^{0|0}\)-points are \(\Z_2\)-isotropy points.
The manifold of \(\R^{0|0}\)-points \(\underline{\mathcal{M}_{0,k}(A)}(\R^{0|0})=M_{0,k}(A)\) coincides with the moduli space of stable maps of genus zero and \(k\) marked points.

\begin{rem}\label{rem:TorusActionM0k3A}
	If \(k\geq 3\) one can show with the same methods that
	\begin{equation}
		\mathcal{M}_{0,k}(A) = \faktor{\C^{0|1}\times \tilde{Z}_{k-3} \times \mathcal{M}(A)}{\Z_2}.
	\end{equation}
	This leads to the same \(\C^*\)-action on \(\mathcal{M}_{0,k}(A)\).

	In this case we denote by a \(N_{k,A}\) the normal bundle of \(\Red{\left(\C^{0|1}\times \tilde{Z}_{k-3} \times \mathcal{M}(A)\right)}\) in \(\C^{0|1}\times \tilde{Z}_{k-3} \times \mathcal{M}(A)\) and its \(\C^*\)-action by \(a_{N_{k,A}}\).
	They satisfy
	\begin{itemize}
		\item
			\(\faktor{N_{k,A}}{\Z_2}\) is isomorphic to \(\underline{\mathcal{M}_{0,k}(A)}(\R^{0|1})\) and the projection \(\faktor{N_{k,A}}{\Z_2}\to M_{0,k}(A)\) coincides with \(\underline{\mathcal{M}_{0,k}(A)}(\R^{0|0}\to\R^{0|1})\).
		\item
			The \(\C^*\)-action \(\underline{a_{\mathcal{M}_{0,k}(A)}}(\R^{0|1})\) coincides with \(a_{N_{k,A}}\) up to a sign.
	\end{itemize}
	However, the construction of the action \(a_{N_{k,A}}\) depends on the explicit identification in Proposition~\ref{prop:TorusActionM0kA}.
	We will show that the \(\C^*\)-action on \(Z_k\times \mathcal{M}(A)\) induces a canonical \(\C^*\)-action on \(\faktor{N_{k,A}}{\Z_2}\) via a different description of \(\underline{\mathcal{M}_{0,k}(A)}(\R^{0|1})\).
\end{rem}

Let \(\tilde{M}_{0,k}(A)=\Red{Z_k}\times M(A)\), that is, \(M_{0,k}(A)=\faktor{\tilde{M}_{0,k}(A)}{\PGL_\C(2)}\).
We will write points of \(\tilde{M}_{0,k}(A)\) as \((\Smooth{p_i}, \phi)\).
Furthermore, denote by \(\widetilde{H^0(S)}\) the trivial bundle \(\tilde{M}_{0,k}(A)\times {\left(H^0(S)\otimes\R^{0|1}\right)}_0\) and by
\begin{equation}
	\begin{split}
		\mathfrak{SUSY}^1\colon \widetilde{H^0(S)} &\to N_{Z_k/\Red{\left(Z_k\right)}}\oplus N_{\mathcal{M}(A)/M(A)} \\
		s(\Smooth{p_i}, \phi) &\mapsto \bigoplus \Smooth{p_i}^*s\oplus -\left<s, \differential\phi\right>
	\end{split}
\end{equation}
compare~\eqref{eq:SUSYR01Points} and~\eqref{eq:SUSYR01Maps}.
Then by construction
\begin{equation}
	\Coker\mathfrak{SUSY}^1 = \faktor{\underline{Z_k\times \mathcal{M}(A)}(\R^{0|1})}{\underline{\SUSY}(\R^{0|1})}.
\end{equation}
We equip both \(\widetilde{H^0(S)}\) and \(N_{Z_k/\Red{\left(Z_k\right)}}\oplus N_{\mathcal{M}(A)/M(A)}\) with the \(\C^*\)-action that rescales the fibers by \(t\in \C^*\).
As \(\mathfrak{SUSY}^1\) is linear, it is also \(\C^*\)-equivariant and hence \(\Coker\mathfrak{SUSY}^1\) also carries a \(\C^*\)-action.

The group \(\SGL_\C(2)\) acts on \(\widetilde{H^0(S)}\) via \(s_\Xi\) and on \(N_{Z_k/\Red{\left(Z_k\right)}}\oplus N_{\mathcal{M}(A)/M(A)}\) as discussed in Example~\ref{ex:LiftOfMöbiusTransformations} and is equivariant as discussed in Example~\ref{ex:SupersymmetryTransformations}.
Consequently,  we obtain a sequence of bundle maps over \(M_{0,k}(A)\):
\begin{diag}\matrix[mat](m) {
		\faktor{\widetilde{H^0(S)}}{\SGL_\C(2)} &\faktor{N_{Z_k/\Red{\left(Z_k\right)}}\oplus N_{\mathcal{M}(A)/M(A)}}{\SGL_\C(2)} &\faktor{\Coker\mathfrak{SUSY}^1}{\SGL_\C(2)}\\
	} ;
	\path[pf] {
		(m-1-1) edge node[auto]{\(\mathfrak{SUSY}^1\)} (m-1-2)
		(m-1-2) edge (m-1-3)
	};
\end{diag}
As in~\eqref{SES:Nk}, the fibers of the bundles are vector bundles up to the \(\Z_2\)-action and the sequence is a short exact sequence up to sign.

Here the last bundle is isomorphic to \(\faktor{N_{k,A}}{\Z_2}\) as a bundle with \(\C^*\)-action because:
\begin{equation}
	\begin{split}
		\faktor{\Coker\mathfrak{SUSY}^1}{\SGL_\C(2)}
		&= \faktor{\left(\faktor{\underline{Z_k\times \mathcal{M}(A)}(\R^{0|1})}{\underline{\SUSY}(\R^{0|1})}\right)}{\SGL_\C(2)} \\
		&= \faktor{\underline{Z_k\times \mathcal{M}(A)}(\R^{0|1})}{\underline{\SCP}(\R^{0|1})} \\
		&= \underline{\mathcal{M}_{0,k}(A)}(\R^{0|1}).
	\end{split}
\end{equation}
Notice that this description of \(\underline{\mathcal{M}_{0,k}(A)}(\R^{0|1})\) also works in the case of \(k=1,2\) in contrast to Proposition~\ref{prop:TorusActionM0kA} and the resulting construction as \(\faktor{N_{k,A}}{\Z_2}\) in Remark~\ref{rem:TorusActionM0k3A}.

\begin{rem}
	Note that the torus action on \(a_{\mathcal{M}_{0,k}(A)}\colon \C^*\times \mathcal{M}_{0,k}(A)\to \mathcal{M}_{0,k}(A)\) constructed in Proposition~\ref{prop:TorusActionM0kA} yields a smooth orbifold morphism but not a holomorphic orbifold morphism because the construction relies on the non-holomorphic splitting \(\mathcal{M}(A)=\Split N_A\).
	In contrast, the torus action \(a_{\mathcal{M}_{0,k}}\colon \C^*\times\mathcal{M}_{0,k}\to \mathcal{M}_{0,k}\) constructed in Proposition~\ref{prop:TorusActionM0k} yields a holomorphic orbifold morphism.

	Similarly, the normal bundle \(N_k\) is a holomorphic bundle but it depends on the integrability of the almost complex structure on \(\mathcal{M}(A)\) whether or not \(N_{k,A}\) is a holomorphic bundle, compare Remark~\ref{rem:Integrability}.
\end{rem}

\begin{rem}\label{rem:Forgetfulmap}
	For \(k\geq2\) let \(f\colon \mathcal{M}_{0,k+1}(A)\to \mathcal{M}_{0,k}(A)\) be the forgetful map that forgets the last point.
	The map \(f\) is \(\C^*\)-equivariant.
\end{rem}

\subsection{Torus action on \texorpdfstring{\(\mathcal{M}_{0,1}(A)\)}{M01(A)}}\label{SSec:TorusActionM01A}
In this section we will construct a torus action on \(\mathcal{M}_{0,1}(A)\).
As in the previous section, the torus action does not descend directly from the torus action on \(\ProjectiveSpace[\C]{1|1}\times \mathcal{M}(A)\).
Instead we have to represent, as before, \(\mathcal{M}_{0,1}(A)\) by a quotient of a smaller supermanifold by a smaller super Lie group.
\begin{lemma}\label{lemma:M01AasQuotient}
	Let \(A\in H_2(N)\) and \(G''\subset\SCP\) be the super Lie subgroup of superconformal automorphisms of \(\ProjectiveSpace[\C]{1|1}\) that preserve the point \(0=[0:1:0]\) of \(\ProjectiveSpace[\C]{1|1}\).
	Then, the moduli space of super \(\targetACI\)-holomorphic curves of genus zero and with one marked point is given by
	\begin{equation}
		\mathcal{M}_{0,1}(A)
		= \faktor{\ProjectiveSpace[\C]{1|1}\times \mathcal{M}(A)}{\SCP}
		= \faktor{\mathcal{M}(A)}{G''}.
	\end{equation}
\end{lemma}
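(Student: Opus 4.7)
The plan is to apply Lemma~\ref{lemma:MoritaEquivalence} to the embedding
\[
f\colon \mathcal{M}(A) \to \ProjectiveSpace[\C]{1|1}\times \mathcal{M}(A), \qquad \Phi \mapsto (0, \Phi),
\]
together with the inclusion $\gamma\colon G''\hookrightarrow \SCP$. By the very definition of $G''$ as the stabilizer of the point $0=[0:1:0]$, the map $f$ is $\gamma$-equivariant, so it induces a groupoid homomorphism $F\colon G''\ltimes \mathcal{M}(A)\to \SCP\ltimes(\ProjectiveSpace[\C]{1|1}\times \mathcal{M}(A))$; once both Morita conditions are verified, the claimed identification of orbit functors follows immediately.

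For condition~\ref{item:MoritaEquivalence:OrbitSurjectivity} I would show that every $B$-point $(p,\Phi)$ of $\ProjectiveSpace[\C]{1|1}\times \mathcal{M}(A)$ lies in the $\SCP$-orbit of an element in the image of $f$. This reduces to showing that $\SCP$ acts transitively on $\ProjectiveSpace[\C]{1|1}$ in the functor-of-points sense, i.e.\ that any $B$-point $p$ can be sent to $0$ by some $g\in \underline{\SCP}(B)$. This is a special case of the triple transitivity recalled in Section~\ref{Sec:AutomorphismsOfPC11}, and can be realized explicitly via the decomposition $\SCP = \SGL_{\C}(2)\cdot \SUSY$ of Lemma~\ref{lemma:Sp21QuotientBySL2}: a supersymmetry transformation first cancels the odd part of $p$, after which a Möbius transformation in $\SGL_{\C}(2)\subset\SCP$ sends the resulting reduced point to $0$. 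With such $g$ the pair $(g, g^{-1}\cdot\Phi)$ is a preimage of $(p,\Phi)$ under the composition in~\ref{item:MoritaEquivalence:OrbitSurjectivity}; the submersion property is inherited from the smoothness of the orbit map $g\mapsto g\cdot 0$ at the identity together with the trivial dependence on the $\mathcal{M}(A)$-factor.

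For condition~\ref{item:MoritaEquivalence:Restriction} I would unravel the fiber product square to the statement that whenever $g\cdot(0,\Phi_1) = (0,\Phi_2)$ one must have $g\in G''$ and $\Phi_2 = g\cdot \Phi_1$. The equation $g\cdot 0 = 0$ on the $\ProjectiveSpace[\C]{1|1}$-factor forces $g$ to lie in the stabilizer $G''$ by definition, and the $\mathcal{M}(A)$-factor then matches tautologically by construction of the diagonal $G''$-action; uniqueness of the lift to $G'' \times \mathcal{M}(A)$ is automatic because $\gamma$ is an inclusion.

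The only genuinely substantial ingredient is the transitivity of the $\SCP$-action on $\ProjectiveSpace[\C]{1|1}$ at the functor-of-points level; both other checks are formal, and the overall argument parallels the proof of Proposition~\ref{prop:TorusActionM0kA} with a single marked point instead of two.
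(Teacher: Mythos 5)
Your proposal is correct and follows exactly the paper's own route: the paper's proof consists precisely of applying Lemma~\ref{lemma:MoritaEquivalence} to the map \(f\colon \mathcal{M}(A)\to \ProjectiveSpace[\C]{1|1}\times\mathcal{M}(A)\), \(\Phi\mapsto(0,\Phi)\), with \(\gamma\colon G''\hookrightarrow\SCP\) the inclusion. The only difference is that you spell out the verification of the two Morita conditions (transitivity of \(\SCP\) on \(B\)-points of \(\ProjectiveSpace[\C]{1|1}\) via the \(\SGL_{\C}(2)\cdot\SUSY\) decomposition, and the stabilizer/fiber-product check), which the paper leaves implicit.
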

\begin{proof}
	Denoting by \(\gamma\colon G''\to \SCP\) the inclusion and by \(f\colon \mathcal{M}(A)\to \ProjectiveSpace[\C]{1|1}\) the map whose \(B\)-points are given by
	\begin{equation}
		\begin{split}
			\underline{f}(B)\colon \underline{\mathcal{M}(A)}(B) &\to \underline{\ProjectiveSpace[\C]{1|1}\times\mathcal{M}(A)}(B)\\
			\Phi &\mapsto (0, \Phi) \\
		\end{split}
	\end{equation}
	satisfies the conditions of Lemma~\ref{lemma:MoritaEquivalence}.
\end{proof}

The group \(G''\subset\SCP\) preserving the points \(0\) of \(\ProjectiveSpace[\C]{1|1}\) is given by
\begin{equation}
	\underline{G''}(B) = \Set{
		\begin{pmatrix}
			a & c & -a\beta \\
			0 & a^{-1} & 0\\
			0 & \beta & 1
		\end{pmatrix}
		\in \underline{\SCP}(B)
	}.
\end{equation}
Every matrix in \(\underline{G''}(B)\) can be uniquely decomposed into a lift of an element of \(\SGL_\C(2)\) and a supersymmetry:
\begin{equation}
	\begin{pmatrix}
		a & c & -a\beta \\
		0 & a^{-1} & 0\\
		0 & \beta & 1
	\end{pmatrix}
	=
	\begin{pmatrix}
		a & c & 0 \\
		0 & a^{-1} & 0\\
		0 & 0 & 1
	\end{pmatrix}
	\begin{pmatrix}
		1 & 0 & -\beta \\
		0 & 1 & 0\\
		0 & \beta & 1
	\end{pmatrix}
\end{equation}
Furthermore, \(G''\cap \SUSY\) is a normal subgroup of \(G''\) and
\begin{equation}
	G'' = \left(\SGL_\C(2)\cap G''\right)\ltimes \left(\SUSY\cap G''\right).
\end{equation}
By Example~\ref{ex:LiftOfMöbiusTransformations} and Example~\ref{ex:ReflectionOfTheOddDirection}, the action of the lift of \(\SGL_\C(2)\) commutes with the torus action.
The only parameter \(\beta\) enters linearly in the supersymmetry transformations in Example~\ref{ex:SupersymmetryTransformations}.
Hence \(\faktor{\mathcal{M}(A)}{\SUSY\cap G''}\) is split and carries a torus action given by multiplication with \(t\in \C^*\).
\begin{prop}\label{prop:TorusActionM01A}
	The moduli space of super \(\targetACI\)-holomorphic curves of genus zero and with one marked point is isomorphic to
	\begin{equation}
		\mathcal{M}_{0,1}(A) = \faktor{\left(\faktor{\mathcal{M}(A)}{\SUSY\cap G''}\right)}{\SGL_\C(2)\cap G''}.
	\end{equation}
	The quotient \(\faktor{\mathcal{M}(A)}{\SUSY\cap G''}\) is a split supermanifold and carries a torus action which descends to the quotient above.
\end{prop}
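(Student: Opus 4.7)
The proof proceeds in three parts, mirroring the three assertions of the statement. First I would start from the identification $\mathcal{M}_{0,1}(A) = \mathcal{M}(A)/G''$ of Lemma~\ref{lemma:M01AasQuotient} and use the explicit decomposition $G'' = (\SGL_\C(2)\cap G'')\ltimes (\SUSY\cap G'')$ stated immediately before the proposition. Its justification combines the displayed matrix factorization with Lemma~\ref{lemma:SUSYR01NormalSubgroup} applied to $\R^{0|1}$-points; the failure of $\SUSY\cap G''$ to be closed under multiplication on general $B$-points (Equation~\eqref{eq:SUSYNotGroup}) does not obstruct the genuine semidirect product structure here, because within $G''$ the non-closure residues are absorbed by the $\SGL_\C(2)\cap G''$ factor. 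A standard two-step quotient argument then yields $\mathcal{M}_{0,1}(A) = (\mathcal{M}(A)/(\SUSY\cap G''))/(\SGL_\C(2)\cap G'')$.

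Next I would establish splitness of the intermediate quotient. By Theorem~\ref{thm:ModuliSpaceIsSplit}, $\mathcal{M}(A)=\Split N_A$ with split coordinates given by the component fields $(\varphi,\psi)$. The group $\SUSY\cap G''$ is the one-parameter family indexed by $\beta$; specializing Example~\ref{ex:SupersymmetryTransformations} to $\alpha=0$ and using Equation~\eqref{eq:SUSYR01Maps}, its action on $\R^{0|1}$-points reads $\psi \mapsto \psi - \left<s_\beta, \differential\varphi\right>$ with $s_\beta = -\beta s_1 z_1\in H^0(S)$, the line of holomorphic sections of $S$ vanishing at the marked point $0$. This shift is $\C$-linear in $\beta$ and in the fiber coordinate of $N_A$, so its image is a line subbundle whose quotient $\tilde{N}_A\to M(A)$ is again a complex vector bundle. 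Invoking Proposition~\ref{prop:TorusActionSplitness} (in its obvious relative version) I identify $\mathcal{M}(A)/(\SUSY\cap G'')$ with $\Split \tilde{N}_A$.

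Finally I would show that the torus action from Example~\ref{ex:TorusActionOnModuliSpace}, sending $(\varphi,\psi)\mapsto (\varphi, t\psi)$, descends. Although it does not commute pointwise with individual elements of $\SUSY\cap G''$, it preserves the family of orbits: applying the torus to the orbit $\{(\varphi, \psi - c\left<s_1 z_1, \differential\varphi\right>) : c\in\C\}$ produces $\{(\varphi, t\psi - tc\left<s_1 z_1, \differential\varphi\right>)\}$, which is the orbit of $(\varphi, t\psi)$ after the reparameterization $c\mapsto tc$. On $\Split \tilde{N}_A$ the descended action is simply fiberwise multiplication of $\tilde{N}_A$ by $t$. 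Because the lift of $\SGL_\C(2)\cap G''$ to $\SCP$ commutes with the torus action by Examples~\ref{ex:LiftOfMöbiusTransformations} and~\ref{ex:ReflectionOfTheOddDirection}, the torus action further descends to the outer quotient $\mathcal{M}_{0,1}(A)$.

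The principal obstacle, in my view, is controlling $\mathcal{M}(A)/(\SUSY\cap G'')$ as a genuine split supermanifold rather than only as a superorbifold. This requires the direction $v_\varphi = \left<s_1 z_1, \differential\varphi\right> \in H^0(\dual{S}\otimes \varphi^*\tangent{N})$ to be non-vanishing for every classical $\targetACI$-holomorphic curve $\varphi\in M(A)$ in the moduli space, so that the shift image is a line subbundle of $N_A$ of constant rank. For $A\neq 0$ this follows from $[\im\varphi]=A$ together with the assumed smoothness of the moduli problem around the marked point $0$; the degenerate case $A=0$ needs either to be excluded or handled in the same spirit as the reasoning of Section~\ref{SSec:TorusActionM0k} for $\mathcal{M}_{0,k}$. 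Once this constant-rank statement is secured, both splitness and the smoothness of the descended torus action are automatic consequences of Proposition~\ref{prop:TorusActionSplitness}.
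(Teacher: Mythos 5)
Your overall architecture coincides with the paper's: Lemma~\ref{lemma:M01AasQuotient}, the factorization of \(G''\) into a lift of \(\SGL_\C(2)\) times a supersymmetry, the two-step quotient, splitness of the intermediate quotient, and descent through \(\SGL_\C(2)\cap G''\). The genuine gap is your third step, the claim that the torus action of Example~\ref{ex:TorusActionOnModuliSpace} on \(\mathcal{M}(A)\) \emph{descends} to \(\faktor{\mathcal{M}(A)}{\SUSY\cap G''}\) because it permutes orbits. The orbit you write down, \(\left(\varphi, \psi - c\left<s_1z_1,\differential\varphi\right>\right)\) for \(c\in\C\), is only the orbit of the action on \(\R^{0|1}\)-points. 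Over a general superpoint \(B\) the supersymmetry with parameter \(\beta\) also shifts the \emph{even} component field: by Equation~\eqref{eq:SUSYComponentFields} with \(\alpha=0\) one has \((\varphi,\psi)\cdot L_\beta = \left(\varphi+\beta z_1\psi_1,\ \psi+\beta z_1\partial_{z_1}\varphi\right)\). Hence \((\varphi,\psi)\cdot L_\beta\cdot t = \left(\varphi+\beta z_1\psi_1,\ t\psi+t\beta z_1\partial_{z_1}\varphi\right)\), while \((\varphi,t\psi)\cdot L_{\tilde\beta} = \left(\varphi+\tilde\beta z_1 t\psi_1,\ t\psi+\tilde\beta z_1\partial_{z_1}\varphi\right)\); matching the \(\psi\)-components forces \(\tilde\beta=t\beta\), after which the \(\varphi\)-components differ by \((1-t^2)\beta z_1\psi_1\), which is nonzero as soon as \(B\) has at least two odd generators and \(\beta\psi_1\neq0\). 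So the torus does not map \(\left(\SUSY\cap G''\right)\)-orbits to orbits and there is no descended action; this is exactly the computation~\eqref{eq:phipsiSUSYt} versus~\eqref{eq:phispsitSUSY} in the remark that the paper places immediately after the proposition, whose conclusion your argument contradicts. A supersmooth action is not determined by, and cannot be constructed from, its effect on \(\R^{0|0}\)- and \(\R^{0|1}\)-points alone.

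The paper's route avoids descent entirely at this stage: it first establishes that \(\faktor{\mathcal{M}(A)}{\SUSY\cap G''}\) is split (because \(\beta\) enters~\eqref{eq:SUSYComponentFields} linearly), and then \emph{defines} the torus action on this quotient as fiberwise rescaling of the odd bundle of that split structure via Proposition~\ref{prop:TorusActionSplitness}; the only descent argument needed is through the outer quotient by \(\SGL_\C(2)\cap G''\), which works because lifts of \(\SGL_\C(2)\) respect splittings and commute with fiber rescaling (Examples~\ref{ex:LiftOfMöbiusTransformations} and~\ref{ex:ReflectionOfTheOddDirection}). Your line-subbundle picture and the quotient bundle \(\tilde N_A\) are a good way to make the splitness precise (it parallels the canonical \(\Coker\mathfrak{SUSY}^1\) construction of Section~\ref{SSec:TorusActionM0kA}), but it too must be established over all superpoints, where the nilpotent shift of \(\varphi\) cannot be ignored; note also that Proposition~\ref{prop:TorusActionSplitness} builds actions on split supermanifolds and does not identify quotients as split, so it cannot carry that step for you. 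Two minor points: your defense of the semidirect product decomposition undercuts itself — if products of two elements of \(\SUSY\cap G''\) acquire an \(\SGL_\C(2)\cap G''\) residue (the \(-\tau\beta\) entry visible in~\eqref{eq:SUSYNotGroup}), then \(\SUSY\cap G''\) is not closed under multiplication over a general \(B\) and is not literally a normal subgroup; here you inherit an imprecision of the paper itself, which is only resolved in the spirit of Lemma~\ref{lemma:SUSYR01NormalSubgroup}. Finally, the case \(A=0\) you worry about is vacuous, since stability with one marked point forces \(A\neq0\) for \(\mathcal{M}_{0,1}(A)\) to exist as a quotient superorbifold in the first place.
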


\begin{rem}
	The torus action on \(\mathcal{M}(A)\) does not descend to the quotient by \(G''\) directly as the following example shows.
	If we describe a point of \(\mathcal{M}(A)\) by \((\varphi, \psi)\) and consider the action by an element of \(\SUSY\cap G''\) as in Example~\ref{ex:SupersymmetryTransformations} and an element \(t\in \C^*\):
	\begin{equation}\label{eq:phipsiSUSYt}
		(\varphi, \psi)\cdot
		\begin{pmatrix}
			1 & 0 & -\beta \\
			0 & 1 & 0\\
			0 & \beta & 1
		\end{pmatrix}
		\cdot t
		= (\varphi + \beta z_1 \psi_1, \psi + \beta z_1\partial_{z_1}\varphi)\cdot t
		= (\varphi + \beta z_1 \psi_1, t\psi + t\beta z_1\partial_{z_1}\varphi)
	\end{equation}
	whereas
	\begin{equation}\label{eq:phispsitSUSY}
		(\varphi, \psi)\cdot t \cdot
		\begin{pmatrix}
			1 & 0 & -\tilde{\beta} \\
			0 & 1 & 0\\
			0 & \tilde{\beta} & 1
		\end{pmatrix}
		= (\varphi + \tilde{\beta} z_1 t\psi_1, t\psi + \tilde{\beta} z_1\partial_{z_1}\varphi)
	\end{equation}
	In general there is no choice of \(\tilde{\beta}\) such that~\eqref{eq:phipsiSUSYt} and~\eqref{eq:phispsitSUSY} coincide.
\end{rem}

\subsection{Torus action on \texorpdfstring{\(\mathcal{M}_T(\{A_\alpha\})\)}{MT(A)}}\label{SSec:TorusActionMTA}
In this section we argue that the torus actions on \(\mathcal{M}_{0,k}(A)\) extend to the moduli spaces \(\mathcal{M}_T(\Set{A_\alpha})\) of super stable maps with fixed tree type.

Here, as in~\cite{KSY-SQCI}, we follow the notation of~\cite{McDS-JHCST} for trees:
A tree is modeled by a set of vertices \(T\) and a set of edges \(E\subset T\times T\) such that there are no loops.
We will assume that edges are undirected, that is \((\alpha, \beta)\in E\) if and only if \((\beta, \alpha)\) in \(E\).
A \(k\)-marking of the tree is a map \(p\colon \Set{1,\dotsc, k}\to T\).
The tree \(T\) together with the set of homology classes \(\Set{A_\alpha}_{\alpha\in T}\) is said to be stable if \(\#p^{-1}(\alpha)+\#\Set{(\alpha, \beta)\in E}\geq 3\) for all \(\alpha\in T\) such that \(A_\alpha=0\).

\TorusActionMTA{}
\begin{proof}
	Let us first recall the construction of \(\mathcal{M}_T(\Set{A_\alpha})\) from~\cite[Section~4.3]{KSY-SQCI}:
	A super stable map of genus zero over \(B\) modeled on the stable \(k\)-marked tree \(T\) is given by a tuple
	\begin{equation}
		(\bm{z}, \bm{\Phi})
		= \left(\left(\Set{z_{\alpha\beta}}_{E_{\alpha\beta}}, \Set{z_i}_{1\leq i\leq k}\right), \Set{\Phi_\alpha}_{\alpha\in T}\right)
		\in {\left(\underline{\ProjectiveSpace[\C]{1|1}}(B)\right)}^{2\#E+k}\times \prod_{\alpha\in T}\underline{\mathcal{M}(A_\alpha)}(B),
	\end{equation}
	where \(z_{\alpha\beta}\) represent the nodal point connecting the copy of \(\ProjectiveSpace[\C]{1|1}\) at \(\alpha\in T\) with the copy of \(\ProjectiveSpace[\C]{1|1}\) at \(\beta\in T\), \(z_i\) is a marked point at the vertex \(p(i)=\alpha\) and \(\Phi_\alpha\colon \ProjectiveSpace[\C]{1|1}\to N\) is a super \(\targetACI\)-holomorphic curve.
	The tuple \((\bm{z}, \bm{\Phi})\) needs to satisfy the following:
	\begin{itemize}
		\item
			The reduction of the nodal points \(z_{\alpha\beta}\) and the marked points \(z_i\) with \(p(i)=\alpha\) on the node \(\alpha\) need to be distinct.
			We denote by \(Z^T\subset{\left(\ProjectiveSpace[\C]{1|1}\right)}^{2\#E+k}\) the subsupermanifold of tuples \(\bm{z}\) satisfying this condition.
		\item
			At all nodes \(\alpha\), \(\beta\in T\) super stable maps have to satisfy \(\Phi_\alpha\circ z_{\alpha\beta} = \Phi_\beta\circ z_{\beta\alpha}\).
			That is super stable maps over \(B\) need to lie in the edge diagonal
			\begin{equation}
				\Delta^T
				=\Set{\left(n_{\alpha\beta}\right)\in N^{2\#E}\given n_{\alpha\beta}=n_{\beta\alpha}}
			\end{equation}
			under the edge evaluation map defined by
			\begin{equation}
				\begin{split}
					\underline{\ev^T}(B)\colon \underline{Z^T}(B)\times\underline{M^T}(B) &\to \underline{N^{2\#E}}(B) \\
					(\bm{z}, \bm{\Phi}) &\mapsto {\left(\Phi_\alpha\circ z_{\alpha\beta}\right)}_{E_{\alpha\beta}}
				\end{split}
			\end{equation}
			Here we set \(M^T=\prod_{\alpha\in T}\mathcal{M}(A)\).
			By the assumption that the edge evaluation map \(\ev^T\) is transversal to \(\Delta^T\), we have that \({\left(\ev^T\right)}^{-1}(\Delta^T)=\left(Z^T\times M^T\right)\times_{N^{2\#E}} \Delta^T\) is a supermanifold.
			As \(\ev^T\) is a map to a classical manifold with only even dimensions, the supermanifold \({\left(\ev^T\right)}^{-1}(\Delta^T)\) is the restriction of \(Z^T\times M^T\) to the open subset \(\underline{{\left(\ev^T\right)}^{-1}(\Delta^T)}(\R^{0|0})\subset\underline{Z^T\times M^T}(\R^{0|0})\).
	\end{itemize}

	A \(B\)-point \(\bm{g}=\Set{g_\alpha}_{\alpha\in T}\) of the automorphism group \(G^T=\prod_{\alpha\in T}\SCP\) of nodal curves modeled on \(T\) acts on super stable maps modeled on \(T\) by
	\begin{equation}
		\bm{g}\cdot\left(\bm{z}, \bm{\Phi}\right)
		= \left(\left(\Set{g_\alpha(z_{\alpha\beta})}, \Set{g_{p(i)}(z_i)}\right), \Set{\Phi_\alpha\circ g_\alpha^{-1}}\right).
	\end{equation}
	The edge evaluation map \(\ev^T\) is \(G^T\) invariant and hence \(G^T\) acts on \({\left(\ev^T\right)}^{-1}(\Delta^T)\).
	The moduli space of simple super stable maps of genus zero, modeled on \(T\) and with partition~\(\Set{A_\alpha}\) of the homology classes is then obtained in~\cite[Theorem~4.3.1]{KSY-SQCI} as a superorbifold by
	\begin{equation}
		\mathcal{M}_T(\Set{A_\alpha})
		= \faktor{{\left(\ev^T\right)}^{-1}(\Delta^T)}{G^T}.
	\end{equation}
	The group \(G^T=\prod_{\alpha\in T}\SCP\) acts factorwise on the nodes, marked points and super \(\targetACI\)-holomorphic maps indexed by \(\alpha\in T\).
	Hence
	\begin{equation}
		\mathcal{M}_T(\Set{A})\subset \prod_{\alpha\in T} \mathcal{M}_{0,k_\alpha+\#E_\alpha} (A_\alpha).
	\end{equation}
	Here, the node \(\alpha\in T\) has \(\#E_\alpha\) many outgoing edges and \(k_\alpha=p^{-1}(\alpha)\) many marked points.

	We have argued in Proposition~\ref{prop:TorusActionM0k}, Proposition~\ref{prop:TorusActionM0kA} and Proposition~\ref{prop:TorusActionM01A} how every moduli space \(\mathcal{M}_{0,k_\alpha+\#E_\alpha} (A_\alpha)\) obtains a torus action via splitness.
	The restriction to the open submanifold \(\underline{{\left(\ev^T\right)}^{-1}(\Delta^T)}(\R^{0|0})\) is torus invariant.
	Hence we obtain an induced torus action on \(\mathcal{M}_T(\Set{A_\alpha})\).
\end{proof}

By construction the moduli space \(\mathcal{M}_T(\Set{A_\alpha})\) is a superorbifold with isotropy \(\Z_2^{\#T}\) at all \(\R^{0|0}\)-points.
Its reduced space, forgetting the \(\Z_2^{\#T}\)-action, is the moduli space \(M_T(\Set{A_\alpha})\) of classical stable maps modeled on \(T\).
The manifold \(M_T(\Set{A_\alpha})\) carries a normal bundle \(N_{T,\Set{A_\alpha}}=\bigoplus_{\alpha\in T}p_\alpha^*N_{k_\alpha+\#E_\alpha, A_\alpha}\) with \(\C^*\)-action such that
\begin{equation}
	\underline{\mathcal{M}_T(\Set{A_\alpha})}(\R^{0|1})
	= \faktor{N_{T,\Set{A_\alpha}}}{\Z_2^{\#T}},
\end{equation}
where each copy of \(\Z_2\) acts on one summand of the normal bundle.

\begin{rem}
	Transversality of the edge evaluation map~\(\ev^T\) can be achieved by generic variation of the almost complex structure \(\targetACI\).
	However, a generic perturbation of \(\targetACI\) will usually not be Kähler.
	Transversality of the edge evaluation map is guaranteed if \(N\) is Kähler and its automorphism group acts transitively by holomorphic isomorphisms, see~\cite[Proposition~7.4.3]{McDS-JHCST}.
	The conditions are in particular satisfied for \(N=\ProjectiveSpace[\C]{n}\).
\end{rem}

\begin{rem}
	Let \(T'\) be the tree that arises from \(T\) by forgetting the \(k\)-th marked point and possibly stabilization.
	It follows from Remark~\ref{rem:Forgetfulmap} that the map \(f\colon \mathcal{M}_T(\Set{A_\alpha})\to \mathcal{M}_{T'}(\Set{A_\alpha})\) is \(\C^*\)-equivariant.

	The Theorem~\ref{thm:TorusActionMTA} also constructs a torus action on the moduli spaces \(\mathcal{M}_T\) of super stable curves modeled on the stable tree \(T\).
	The forgetful map \(\pi\colon \mathcal{M}_T(\Set{A_\alpha})\to \mathcal{M}_T\) is \(\C^*\)-equivariant.
\end{rem}

\printbibliography

\textsc{Enno Keßler\\
Max-Planck-Institut für Mathematik,
Vivatsgasse 7,
53111 Bonn,
Germany}\\
\texttt{kessler@mpim-bonn.mpg.de}\\

\textsc{Artan Sheshmani\\
Harvard University,
Jefferson Laboratory,
17 Oxford Street,
Cambridge, MA 02138,
USA\\[.5em]
Massachusetts Institute of Technology (MIT),
IAiFi Institute,
182 Memorial Drive,
Cambridge, MA 02139,
USA}\\
\texttt{artan@mit.edu}\\

\textsc{Shing-Tung Yau\\
Yau Mathematical Sciences Center,
Tsinghua University,
Haidian District,
Beijing,
China
}\\
\texttt{styau@tsinghua.edu.cn}

\end{document}